\renewcommand*{\backrefalt}[4]{%
    \ifcase #1 \footnotesize{(Not cited.)}%
    \or        \footnotesize{(Cited on page~#2.)}%
    \else      \footnotesize{(Cited on pages~#2.)}%
    \fi}
\long\def\comment#1{}
\newtheorem{theorem}{Theorem}[section]
\newtheorem{lemma}[theorem]{Lemma}
\newtheorem{proposition}[theorem]{Proposition}
\newtheorem{definition}{Definition}[section]
\newtheorem{example}{Example}[section]
\newtheorem{remark}{Remark}[section]
\newtheorem{assumption}{Assumption}[section]
\numberwithin{equation}{section}
\newcommand{\st}{\textnormal{s.t.}}
\newcommand{\Exp}{\textnormal{Exp}}
\newcommand{\grad}{\textnormal{grad}}
\newcommand{\subg}{\textnormal{subgrad}}
\newcommand{\x}{\mathbf x}
\newcommand{\y}{\mathbf y}
\newcommand{\MCal}{\mathcal{M}}
\newcommand{\SCal}{\mathcal{S}}
\newcommand{\XCal}{\mathcal{X}}
\newcommand{\YCal}{\mathcal{Y}}
\newcommand{\proj}{\textnormal{proj}}
\newcommand{\St}{\textnormal{St}}
\newcommand{\RMMO}{RMMO\xspace}
\newcommand{\br}{\mathbb{R}}
\newcommand{\bn}{\mathbb{N}}
\newcommand{\ba}{\begin{array}}
\newcommand{\ea}{\end{array}}
\newcommand{\EE}{{\mathbb{E}}}
\newcommand{\NCal}{\mathcal{N}}
\newcommand{\bigO}{O}
\newcommand{\mydefn}{:=}
\begin{document}

\begin{center}

{\bf{\LARGE{First-Order Algorithms for Min-Max Optimization \\ [.2cm] in Geodesic Metric Spaces}}}

\vspace*{.2in}
{\large{
\begin{tabular}{c}
Michael I. Jordan $^{\diamond, \dagger}$ \and Tianyi Lin $^\diamond$ \and Emmanouil V. Vlatakis-Gkaragkounis$^\diamond$ \\
\end{tabular}
}}

\vspace*{.2in}

\begin{tabular}{c}
Department of Electrical Engineering and Computer Sciences$^\diamond$ \\
Department of Statistics$^\dagger$ \\ 
University of California, Berkeley \\
\end{tabular}

\vspace*{.2in}

\today

\vspace*{.2in}

\begin{abstract}
From optimal transport to robust dimensionality reduction, a plethora of machine learning applications can be cast into the min-max optimization problems over Riemannian manifolds. Though many min-max algorithms have been analyzed in the Euclidean setting, it has proved elusive to translate these results to the Riemannian case.~\citet{Zhang-2022-Minimax} have recently shown that geodesic convex concave Riemannian problems always admit saddle-point solutions. Inspired by this result, we study whether a performance gap between Riemannian and optimal Euclidean space convex-concave algorithms is necessary. We answer this question in the negative---we prove that the Riemannian corrected extragradient (RCEG) method achieves last-iterate convergence at a linear rate in the geodesically strongly-convex-concave case, matching the Euclidean result. Our results also extend to the stochastic or non-smooth case where RCEG and Riemanian gradient ascent descent (RGDA) achieve near-optimal convergence rates up to factors depending on curvature of the manifold.  
\end{abstract}

\end{center}

\section{Introduction}\label{sec:introduction}
Constrained optimization problems arise throughout machine learning, in classical settings such as dimension reduction~\citep{Boumal-2011-RTRMC}, dictionary learning~\citep{Sun-2016-Complete-I, Sun-2016-Complete-II}, and deep neural networks~\citep{Huang-2018-Orthogonal}, but also in emerging problems involving decision-making and multi-agent interactions. While simple convex constraints (such as norm constraints) can be easily incorporated in standard optimization formulations, notably (proximal) gradient descent~\citep{Raskutti-2015-Information,Giannou-2021-Survival,Giannou-2021-Rate,Antonakopoulos-2020-Online,Vlatakis-2020-Noregret}, in a range of other applications such as matrix recovery~\citep{Fornasier-2011-Low,Candes-2008-Enhancing}, low-rank matrix factorization~\citep{Han-2021-Riemannian} and generative adversarial nets~\citep{Goodfellow-2014-Generative}, the constraints are fundamentally nonconvex and are often treated via special heuristics.

Thus, a general goal is to design algorithms that systematically take account of special geometric structure of the feasible set~\citep{Mei-2021-Leveraging, Lojasiewicz-1963-Propriete,Polyak-1963-Gradient}. A long line of work in the machine learning (ML) community has focused on understanding the geometric properties of commonly used constraints and how they affect optimization;~\citep[see, e.g.,][]{Ge-2015-Escaping, Anandkumar-2016-Efficient, Sra-2016-Geometric,Jin-2017-Escape, Ge-2017-No, Du-2017-Gradient, Reddi-2018-Generic,Criscitiello-2019-Efficiently, Jin-2021-Nonconvex}. A prominent aspect of this agenda has been the re-expression of these constraints through the lens of Riemannian manifolds. This has given rise to new algorithms~\citep{Sra-2015-Conic,Hosseini-2015-Matrix} with a wide range of ML applications, inclduing online principal component analysis (PCA), the computation of Mahalanobis distance from noisy measurements~\citep{Bonnabel-2013-Stochastic}, consensus distributed algorithms for aggregation in ad-hoc wireless networks~\citep{Tron-2012-Riemannian} and maximum likelihood estimation for certain non-Gaussian (heavy- or light-tailed) distributions~\citep{Wiesel-2012-Geodesic}.

Going beyond simple minimization problems, the robustification of many ML tasks can be formulated as min-max optimization problems. Well-known examples in this domain include adversarial machine learning \citep{Kumar-2017-Semi,Chen-2018-Metrics}, optimal transport~\citep{Lin-2020-Projection}, and online learning~\citep{Mertikopoulos-2018-Riemannian, Bomze-2019-Hessian,Antonakopoulos-2020-Online}. Similar to their minimization counterparts, non-convex constraints have been widely applicable to the min-max optimization as well~\citep{Heusel-2017-Gans, Daskalakis-2018-Limit, Balduzzi-2018-Mechanics, Mertikopoulos-2019-Optimistic, Jin-2020-Local}. Recently there has been significant effort in proving tighter results either under more structured assumptions~\citep{Thekumprampil-2019-Efficient, Nouiehed-2019-Solving, Lu-2020-Hybrid, Azizian-2020-Tight,Diakonikolas-2020-Halpern, Golowich-2020-Last, Lin-2020-Near, Lin-2020-Gradient, Liu-2021-First, Ostrovskii-2021-Efficient, Kong-2021-Accelerated}, and/or obtaining last-iterate convergence guarantees~\citep{Daskalakis-2018-Limit, Daskalakis-2019-Last, Mertikopoulos-2019-Optimistic,Adolphs-2019-Local, Liang-2019-Interaction, Gidel-2019-Negative, Mazumdar-2020-Gradient, Liu-2020-Towards, Mokhtari-2020-Unified, Lin-2020-Near, Hamedani-2021-Primal, Abernethy-2021-Last, Cai-2022-Tight} for computing min-max solutions in convex-concave settings.  Nonetheless, the analysis of the iteration complexity in the general \emph{non-convex} \emph{non-concave} setting is still in its infancy~\citep{Vlatakis-2019-Poincare,Vlatakis-2021-Solving}. In response, the optimization community has recently studied how to extend standard min-max optimization algorithms such as gradient descent ascent (GDA) and extragradient (EG) to the Riemannian setting. In mathematical terms, given two Riemannian manifolds $\MCal, \NCal$ and a function $f: \MCal\times \NCal \to \br$, the Riemannian min-max optimization (\RMMO) problem becomes 
\begin{equation*}
\min_{x \in \MCal} \max_{y \in \NCal} f(x, y).
\end{equation*}
The change of geometry from Euclidean to Riemannian poses several difficulties. Indeed, a fundamental stumbling block has been that this problem may not even have theoretically meaningful solutions. In contrast with minimization where an optimal solution in a bounded domain is always guaranteed \citep{Fearnley-2021-Complexity}, existence of such saddle points necessitates typically the application of topological fixed point theorems~\citep{Brouwer-1911-Abbildung, Kakutani-1941-Generalization}, KKM Theory~\citep{Knaster-1929-Beweis}). For the case of convex-concave $f$ with compact sets $\XCal$ and $\YCal$, ~\citet{Sion-1958-General} generalized the celebrated theorem~\citep{Neumann-1928-Theorie} and guaranteed that a solution $(x^\star,y^\star)$ with the following property exists
 \[\min_{x\in\XCal} f(x,y^\star) = f(x^\star,y^\star) = \max_{y\in\YCal} f(x^\star,y).\]
However, at the core of the proof of this result is an ingenuous application of Helly's lemma \citep{Helly-1923-Mengen} for the sublevel sets of $f$, and, until the work of~\citet{Ivanov-2014-Helly}, it has been unclear how to formulate an analogous lemma for the Riemannian geometry. As a result, until recently have extensions of the min-max theorem been established, and only for restricted manifold families~\citep{Komiya-1988-Elementary,Kristaly-2014-Nash, Park-2019-Riemannian}.

\citet{Zhang-2022-Minimax} was the first to establish a min-max theorem for a flurry of Riemannian manifolds equipped with unique geodesics. Notice that this family is not a mathematical artifact since it encompasses many practical applications of \RMMO, including Hadamard and Stiefel ones used in PCA~\citep{Lee-2022-Fast}. Intuitively, the unique geodesic between two points of a manifold is the analogue of the a linear segment between two points in convex set: For any two points $x_1,x_2 \in \XCal$, their connecting geodesic is the unique shortest path contained in $\XCal$ that connects them.

Even when the \RMMO is well defined, transferring the guarantees of traditional min-max optimization algorithms like Gradient Ascent Descent (GDA) and Extra-Gradient (EG) to the Riemannian case is non-trivial.  Intuitively speaking, in the Euclidean realm the main leitmotif of the last-iterate analyses the aforementioned algorithms is a proof that  $\delta_t=\|x_t-x^*\|^2$ is decreasing over time. To achieve this, typically the proof correlates $\delta_t$ and $\delta_{t-1}$ via a ``square expansion,'' namely:
\begin{equation}\label{eq:square-expansion}
\underbrace{\|x_{t-1}-x^*\|^2}_{\alpha^2} = \underbrace{\|x_{t}-x^*\|^2}_{\beta^2} + \underbrace{\|x_{t-1}-x_{t}\|^2}_{\gamma^2}-\underbrace{2\langle x_{t}-x^*,x_{t-1}-x_{t}\rangle}_{2\beta\gamma\cos(\hat{A})}.
\end{equation}
Notice, however that the above expression relies strongly on properties of Euclidean geometry (and the flatness of the corresponding line), namely that the the lines connecting the three points $x_t$, $x_{t-1}$ and $x^*$ form a triangle; indeed, it is the generalization of the Pythagorean theorem, known also as the law of cosines, for the induced triangle ${(ABC)}:=\{(x_{t},x_{t-1},x^*)\}$.
In a uniquely geodesic manifold such triangle may not belong to the manifold as discussed above. As a result, the difference of distances to the equilibrium using the geodesic paths $d_\MCal^2(x_{t}, x^* ) - d_\MCal^2(x_{t-1}, x^* )$ generally cannot be given in a closed form. The manifold's curvature controls how close these paths are to forming a Euclidean triangle. In fact, the phenomenon of \emph{distance distortion}, as it is typically called, was hypothesised by~\citet[Section~4.2]{Zhang-2022-Minimax} to be the cause of exponential slowdowns when applying EG to \RMMO problems when compared to their Euclidean counterparts. 

Multiple attempts have been made to bypass this hurdle.~\citet{Huang-2020-Gradient} analyzed the Riemannian GDA (RGDA) for the non-convex non-concave setting. However, they do not present any last-iterate convergence results and, even in the average/best iterate setting, they only derive sub-optimal rates for the geodesic convex-concave setting due to the lack of the machinery that convex analysis and optimization offers they derive sub-optimal rates for the geodesic convex-concave case, which is the problem of our interest. The analysis of~\citet{Han-2022-Riemannian} for Riemannian Hamiltonian Method (RHM), matches the rate of second-order methods in the Euclidean case. Although theoretically faster in terms of iterations, second-order methods are not preferred in practice since evaluating second order derivatives for optimization problems of thousands to millions of parameters quickly becomes prohibitive. Finally, \citet{Zhang-2022-Minimax} leveraged the standard averaging output trick in EG to derive a sublinear convergence rate of $O(1/\epsilon)$ for the general geodesically convex-concave Riemannian framework. In addition, they conjectured that the use of a different method could close the exponential gap for the geodesically strongly-convex-strongly-convex scenario and its Euclidean counterpart.

Given this background, a crucial question underlying the potential for successful application of first-order algorithms to Riemannian settings is the following:
\begin{center}
\emph{Is a performance gap necessary between Riemannian and Euclidean optimal convex-concave algorithms in terms of accuracy and the condition number?}
\end{center}
 
\subsection{Our Contributions} 
Our aim in this paper is to provide an extensive analysis of the Riemannian counterparts of Euclidean optimal first-order methods adapted to the manifold-constrained  setting. For the case of the smooth objectives, we consider the \emph{Riemannian corrected extragradient} (RCEG) method while for non-smooth cases, we analyze the textbook~\emph{Riemannian gradient descent ascent} (RGDA) method. Our main results are summarized in the following table. 
\begin{center}\small
\setlength{\arrayrulewidth}{0.1mm}
\setlength{\tabcolsep}{8pt}
\renewcommand{\arraystretch}{1}
Alg: \emph{RCEG}. Smooth setting with \emph{$\ell$-Lipschitz Gradient} (cf. Assumption~\ref{Assumption:basics},~\ref{Assumption:gscsc-smooth} and~\ref{Assumption:gcc-smooth}) \\
\begin{tabular}{>{\centering\arraybackslash}m{0.15\textwidth}
>{\centering\arraybackslash}m{0.15\textwidth}>{\centering\arraybackslash}m{0.2\textwidth}>{\centering\arraybackslash}m{0.38\textwidth}>{\centering\arraybackslash}m{0.05\textwidth}}
\toprule
\textbf{Perf. Measure} &  \textbf{Setting} &\textbf{Complexity} &\textbf{Theorem} \\
\midrule
Last-Iterate & Det. GSCSC & $O\left(\kappa(\sqrt{\tau_0} + \frac{1}{\underline{\xi}_0})\log (\frac{1}{\epsilon})\right)$ &\textbf{Thm.}~\ref{Thm:RCEG-SCSC} \\
Last-Iterate & Stoc. GSCSC & 
$O\left(\kappa(\sqrt{\tau_0} + \frac{1}{\underline{\xi}_0})\log(\frac{1}{\epsilon})+\frac{\sigma^2\overline{\xi}_0}{\mu^2\epsilon}\log(\frac{1}{\epsilon})\right)$ & \textbf{Thm.}~\ref{Thm:SRCEG-SCSC} \\
Avg-Iterate & Det.  GCC & $O\left(\frac{\ell\sqrt{\tau_0}}{\epsilon}\right)$ & \cite[\textbf{Thm.1}]{Zhang-2022-Minimax} \\
Avg-Iterate & Stoc. GCC & $O\left(\frac{\ell\sqrt{\tau_0}}{\epsilon} + \frac{\sigma^2\overline{\xi}_0}{\epsilon^2}\right)$ &\textbf{Thm.}~\ref{Thm:SRCEG-CC}\\
\end{tabular}
Alg: \emph{RGDA}. Nonsmooth setting with \emph{$L$-Lipschitz Function} (cf. Assumption~\ref{Assumption:gscsc-nonsmooth} and~\ref{Assumption:gcc-nonsmooth})
\begin{tabular}{>{\centering\arraybackslash}m{0.15\textwidth}
>{\centering\arraybackslash}m{0.15\textwidth}>{\centering\arraybackslash}m{0.2\textwidth}>{\centering\arraybackslash}m{0.38\textwidth}>{\centering\arraybackslash}m{0.05\textwidth}}
\toprule
Last-Iterate & Det. GSCSC & $O\left(\frac{L^2\overline{\xi}_0}{\mu^2\epsilon}\right)$ &\textbf{Thm.}~\ref{Thm:RGDA-SCSC}\\
Last-Iterate & Stoc. GSCSC & $O\left(\frac{(L^2+\sigma^2)\overline{\xi}_0}{\mu^2\epsilon}\right)$ &\textbf{Thm.}~\ref{Thm:SRGDA-SCSC}\\
Avg-Iterate & Det. GCC & $O\left(\frac{L^2\overline{\xi}_0}{\epsilon^2}\right)$ &\textbf{Thm.}~\ref{Thm:RGDA-CC}\\
Avg-Iterate & Stoc. GCC & $O\left(\frac{(L^2+\sigma^2)\overline{\xi}_0}{\epsilon^2}\right)$ &\textbf{Thm.}~\ref{Thm:SRGDA-CC}\\
\bottomrule
\end{tabular}
\end{center}
For the definition of the acronyms, Det and Stoc stand for deterministic and stochastic, respectively. GSCSC and GCC stand for geodesically strongly-convex-strongly-concave (cf. Assumption~\ref{Assumption:gscsc-smooth} or Assumption~\ref{Assumption:gscsc-nonsmooth}) and geodesically convex-concave (cf. Assumption~\ref{Assumption:gcc-smooth} or Assumption~\ref{Assumption:gcc-nonsmooth}). Here $\epsilon \in (0, 1)$ is the accuracy, $L,\ell$ the Lipschitzness of the objective and its gradient, $\kappa=\ell/\mu$ is the condition number of the function, where $\mu$ is the strong convexity parameter, $(\tau_0,\underline{\xi}_0, \overline{\xi}_0)$ are curvature parameters (cf. Assumption~\ref{Assumption:basics}), and $\sigma^2$ is the variance of a Riemannian gradient estimator. 

Our first main contribution is the derivation of a linear convergence rate for RCEG, answering the open conjecture of \cite{Zhang-2022-Minimax} about the performance gap of single-loop extragradient methods. Indeed, while a direct comparison between $d_\MCal^2(x_t, x^*)$ and $d_\MCal^2(x_{t-1}, x^*)$ is infeasible, we are able to establish a relationship between the iterates via appeal to the duality gap function and obtain a contraction in terms of $d_\MCal^2(x_t, x^*)$. In other words, the effect of Riemannian distance distortion is quantitative (the contraction ratio will depend on it) rather than qualitative (the geometric contraction still remains under a proper choice of constant stepsize). More specifically, we use $d_\MCal^2(x_t, x^\star) + d_\NCal^2(y_t, y^\star)$ and $d_\MCal^2(x_{t+1}, x^\star) + d_\NCal^2(y_{t+1}, y^\star)$ to bound a gap function defined by $f(\hat{x}_t, y^\star) - f(x^\star, \hat{y}_t)$. Since the objective function is geodesically strongly-convex-strongly-concave, we have $f(\hat{x}_t, y^\star) - f(x^\star, \hat{y}_t)$ is lower bounded by $\frac{\mu}{2}(d_\mathcal{M}(\hat{x}_t, x^\star)^2 + d_\mathcal{N}(\hat{y}_t, y^\star)^2)$. Then, using the relationship between $(x_t, y_t)$ and $(\hat{x}_t, \hat{y}_t)$, we conclude the desired results in Theorem~\ref{Thm:RCEG-SCSC}. Notably, our approach is not affected by the nonlinear geometry of the manifold.

Secondly, we endeavor to give a systematic analysis of aspects of the objective function, including its smoothness, its convexity and oracle access. As we shall see, similar to the Euclidean case, better finite-time convergence guarantees are connected with a geodesic smoothness condition. For the sake of completeness, in the paper's supplement we present the performance of Riemannian GDA for the full spectrum of stochasticity for the non-smooth case. More specifically, for the stochastic setting, the key ingredient to get the optimal convergence rate is to carefully select the step size such that the noise of the gradient estimator will not affect the final convergence rate significantly. As a highlight, such technique has been used for analyzing stochastic RCEG in the Euclidean setting~\citep{Kotsalis-2022-Simple} and our analysis can be seen as the extension to the Riemannian setting. For the nonsmooth setting, the analysis is relatively simpler compared to smooth settings but we still need to deal with the issue caused by the nonlinear geometry of manifolds and the interplay between the distortion of Riemannian metrics, the gap function and the bounds of Lipschitzness of our bi-objective. Interestingly, the rates we derive are near optimal in terms of accuracy and condition number of the objective, and analogous to their Euclidean counterparts.

\section{Preliminaries and Technical Background}\label{sec:prelim}
We present the basic setup and optimality conditions for Riemannian min-max optimization. Indeed, we focus on some of key concepts that we need from Riemannian geometry, deferring a fuller presentation, including motivating examples and further discussion of related work, to Appendix~\ref{sec:further-related}-\ref{app:MG}. 

\paragraph{Riemannian geometry.} An $n$-dimensional manifold $\MCal$ is a topological space where any point has a neighborhood that is homeomorphic to the $n$-dimensional Euclidean space. For each $x \in \MCal$, each tangent vector is tangent to all parametrized curves passing through $x$ and the tangent space $T_x \MCal$ of a manifold $\MCal$ at this point is defined as the set of all tangent vectors. A Riemannian manifold $\MCal$ is a smooth manifold that is endowed with a smooth (``Riemannian'') metric $\langle \cdot, \cdot\rangle_x$ on the tangent space $T_x \MCal$ for each point $x \in \MCal$. The inner metric induces a norm $\|\cdot\|_x$ on the tangent spaces.   

A geodesic can be seen as the generalization of an Euclidean linear segment and is modeled as a smooth curve (map),  $\gamma: [0, 1] \mapsto \MCal$, which is locally a distance minimizer.  Additionally, because of  the non-flatness of a manifold a different relation between the angles and the lengths of an arbitrary geodesic triangle is induced. This distortion can be quantified via the \emph{sectional curvature} parameter $\kappa_\MCal$ thanks to Toponogov's theorem~\citep{Cheeger-1975-Comparison, Burago-1992-AD}.
\begin{figure}[!h]
\centering
\includegraphics[width=0.5\textwidth]{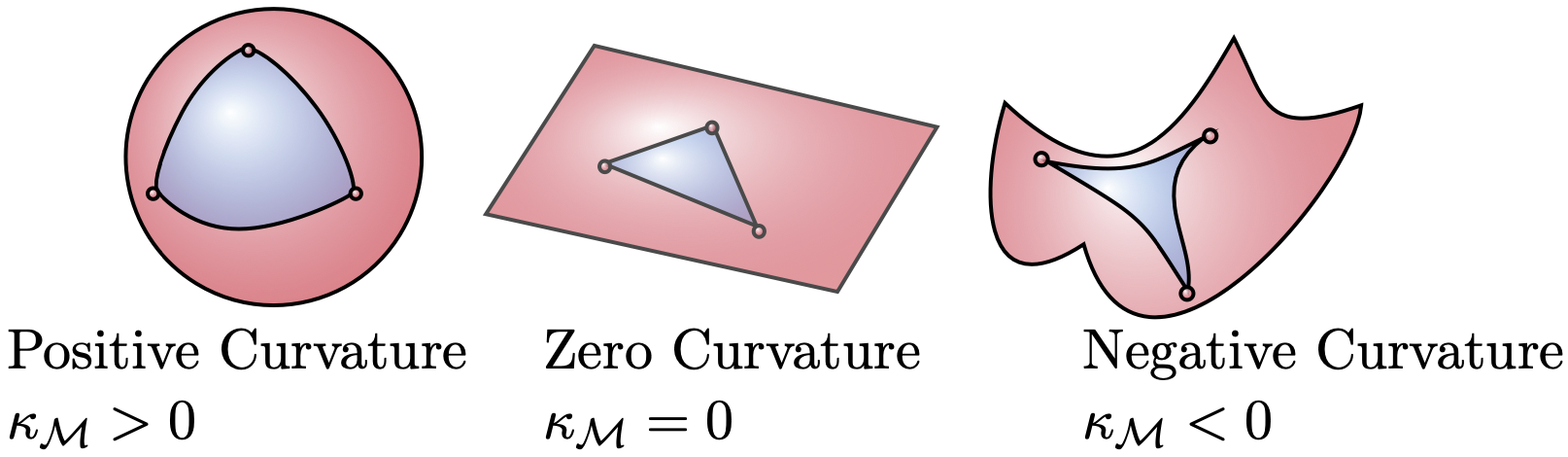}
\vspace{-0.5cm}
\end{figure}
A constructive consequence of this definition are the trigonometric comparison inequalities (TCIs) that will be essential in our proofs; see \citet[Corollary~2.1]{Alimisis-2020-Continuous} and~\citet[Lemma~5]{Zhang-2016-First} for detailed derivations. Assuming bounded sectional curvature, TCIs provide a tool for bounding Riemannian ``inner products'' that are more troublesome than classical Euclidean inner products.  

The following proposition summarizes the TCIs that we will need; note that if $\kappa_{\min} = \kappa_{\max} = 0$ (i.e., Euclidean spaces), then the proposition reduces to the law of cosines.
\begin{proposition}\label{Prop:key-inequality}
Suppose that $\MCal$ is a Riemannian manifold and let $\Delta$ be a geodesic triangle in $\MCal$ with the side length $a$, $b$, $c$ and let $A$ be the angle between $b$ and $c$. Then, we have
\begin{enumerate}[topsep=0pt,leftmargin=2ex]
\setlength{\itemsep}{0pt}
\setlength{\parskip}{.2ex}
\item  If  $\kappa_\MCal$ that is upper bounded by $\kappa_{\max} > 0$ and the diameter of $\MCal$ is bounded by $\frac{\pi}{\sqrt{\kappa_{\max}}}$, then 
\begin{equation*}
a^2 \geq \underline{\xi}(\kappa_{\max}, c) \cdot b^2 + c^2 - 2bc\cos(A), 
\end{equation*}
where $\underline{\xi}(\kappa, c) := 1$ for $\kappa \leq 0$ and $\underline{\xi}(\kappa, c) := c\sqrt{\kappa}\cot(c\sqrt{\kappa}) < 1$ for $\kappa > 0$.
\item  If  $\kappa_\MCal$ is lower bounded by $\kappa_{\min}$, then 
\begin{equation*}
a^2 \leq \overline{\xi}(\kappa_{\min}, c) \cdot b^2 + c^2 - 2bc\cos(A),
\end{equation*}
where $\overline{\xi}(\kappa, c) := c\sqrt{-\kappa}\coth(c\sqrt{-\kappa}) > 1$ if $\kappa < 0$ and $\overline{\xi}(\kappa, c) := 1$ if $\kappa \geq 0$. 
\end{enumerate}
\end{proposition}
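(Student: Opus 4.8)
The plan is to derive both inequalities from the classical triangle comparison theorems of Riemannian geometry. These reduce each claim to an exact law of cosines in a two-dimensional space of constant curvature, after which the only remaining work is an elementary estimate of trigonometric (resp.\ hyperbolic) functions.

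For part~2, since the sectional curvature of $\MCal$ is bounded below by $\kappa_{\min}$, I would apply the hinge form of Toponogov's theorem: the side $a$ opposite the angle $A$ of $\Delta$ is no longer than the third side $\bar a$ of the hinge formed by geodesic segments of lengths $b$ and $c$ meeting at the same angle $A$ in the model surface $M_{\kappa_{\min}}$ of constant curvature $\kappa_{\min}$ --- the Euclidean plane if $\kappa_{\min}\ge 0$, the hyperbolic plane of curvature $\kappa_{\min}$ if $\kappa_{\min}<0$. In $M_{\kappa_{\min}}$ the law of cosines is an identity; for $\kappa_{\min}<0$, writing $s=\sqrt{-\kappa_{\min}}$, it reads $\cosh(s\bar a)=\cosh(sb)\cosh(sc)-\sinh(sb)\sinh(sc)\cos A$. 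It then suffices to show $\bar a^{\,2}\le \overline\xi(\kappa_{\min},c)\,b^2+c^2-2bc\cos A$, which is a one-variable estimate: one uses $\cosh u-1\ge \tfrac12 u^2$ to pass from $\bar a^2$ to $\cosh(s\bar a)-1$, together with the complementary inequalities $\sinh u\le u\cosh u$ and $\cosh u-1\le\tfrac12 u^2\cosh u$ and the monotonicity of $u\mapsto u\coth u$ on $(0,\infty)$; the last fact is exactly what singles out the value $\overline\xi(\kappa_{\min},c)=c\sqrt{-\kappa_{\min}}\coth(c\sqrt{-\kappa_{\min}})$. When $\kappa_{\min}\ge 0$ the statement is the Euclidean law of cosines with $\overline\xi\equiv 1$.

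For part~1, the hypothesis $\diam(\MCal)<\pi/\sqrt{\kappa_{\max}}$ is exactly what makes the argument work: it forces the three sides of $\Delta$ to be minimizing geodesics contained in a region where the curvature-upper-bound comparison (Rauch, equivalently the local $\mathrm{CAT}(\kappa_{\max})$ inequality) applies. That comparison states that each angle of $\Delta$ is at most the corresponding angle of the triangle with the same side lengths in the round sphere $M_{\kappa_{\max}}$; since, in $M_{\kappa_{\max}}$, the side opposite an angle increases strictly with that angle when the two adjacent sides are fixed, one obtains the reversed inequality $a\ge\bar a$, where $\bar a$ is now the third side of the $(b,c,A)$-hinge in $M_{\kappa_{\max}}$. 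Writing $s=\sqrt{\kappa_{\max}}$ (so that $sa,sb,sc\in[0,\pi)$), the spherical law of cosines gives $\cos(s\bar a)=\cos(sb)\cos(sc)+\sin(sb)\sin(sc)\cos A$, and one concludes by proving $\bar a^{\,2}\ge \underline\xi(\kappa_{\max},c)\,b^2+c^2-2bc\cos A$ from $1-\cos u\le\tfrac12 u^2$, the lower bound $1-\cos u\ge\tfrac12 u\sin u$ valid on $[0,\pi]$, and the monotonicity (now strictly decreasing on $(0,\pi)$) of $u\mapsto u\cot u$, which produces $\underline\xi(\kappa_{\max},c)=c\sqrt{\kappa_{\max}}\cot(c\sqrt{\kappa_{\max}})<1$.

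The step I expect to be the main obstacle is the scalar reduction in each part. Unlike the Euclidean case, $sb$ and $sc$ may exceed $\pi/2$, so $\cos(sc)$ changes sign and a naive rearrangement of the spherical law of cosines yields terms whose signs must be tracked carefully; one must also regroup $1-\cos(sb)\cos(sc)$ in the precise way that lets the curvature factor attach to $b^2$ alone. These manipulations --- and the verification that $u\coth u$ and $u\cot u$ have the claimed monotonicity --- are essentially those of \citet[Lemma~5]{Zhang-2016-First} and \citet[Corollary~2.1]{Alimisis-2020-Continuous}. Finally, I would record the harmless degenerate cases: the bounds are vacuous whenever their right-hand sides are negative (possible once $\underline\xi<1$ and $A$ is near $0$), and letting $\kappa_{\min}\uparrow 0$ or $\kappa_{\max}\downarrow 0$ recovers the Euclidean law of cosines, consistently with the remark preceding the statement.
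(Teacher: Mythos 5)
Your outline is correct, and it is worth noting at the outset that the paper itself offers no proof of this proposition: it is imported verbatim (restated as Propositions~\ref{Prop:key-inequality-UB} and~\ref{Prop:key-inequality-LB} in the appendix) from \citet[Lemma~5]{Zhang-2016-First} and \citet[Corollary~2.1]{Alimisis-2020-Continuous}, with only the remark that the proofs rest on Toponogov's theorem and a Riccati comparison estimate. Your treatment of part~2 is precisely the Zhang--Sra argument: Toponogov's hinge comparison to pass to the hyperbolic plane of curvature $\kappa_{\min}$, then the one-variable estimate on the hyperbolic law of cosines that singles out $c\sqrt{-\kappa_{\min}}\coth(c\sqrt{-\kappa_{\min}})$. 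For part~1 you take a mildly different route from the cited source: Alimisis et al.\ do not argue via an angle comparison and the spherical law of cosines, but instead bound the Hessian of $\tfrac12 d^2(\cdot,p)$ from below by $\sqrt{\kappa_{\max}}\,d\cot(\sqrt{\kappa_{\max}}\,d)$ using the Hessian/Riccati comparison theorem and integrate along the geodesic joining the two endpoints of side $b$; this is where the hypothesis $\mathrm{diam}(\MCal)\le\pi/\sqrt{\kappa_{\max}}$ enters cleanly (it keeps the cotangent factor finite and positive). Your $\mathrm{CAT}(\kappa_{\max})$ angle-comparison route reaches the same hinge inequality $a\ge\bar a$, but it additionally requires the triangle's perimeter to be below $2\pi/\sqrt{\kappa_{\max}}$, which does not follow from the diameter bound alone and must be supplied by the unique-geodesic/normal-ball hypothesis --- a point worth flagging if you carry out that version. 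Either way, deferring the remaining scalar trigonometric manipulations to the cited lemmas is exactly what the paper does, so no gap remains beyond what the paper itself leaves to the references.
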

\begin{wrapfigure}{r}{0.25\textwidth}
\vspace{-0.1em}
\includegraphics[width=0.25\textwidth]{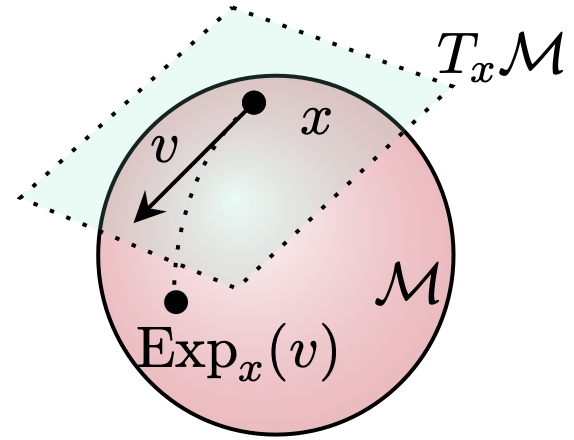}
\end{wrapfigure}
Also, in contrast to the Euclidean case, $x$ and $v=\grad_x f(x)$ do not lie in the same space, since $\MCal$ and $T_x \MCal$ respectively are distinct entities.
The interplay between these dual spaces typically is carried out via the \textit{exponential maps}.
An exponential map at a point $x \in \MCal$ is a mapping from the tangent space $T_x \MCal$ to $\MCal$. In particular, $y := \Exp_x(v) \in \MCal$ is defined such that there exists a geodesic $\gamma: [0, 1] \mapsto \MCal$ satisfying $\gamma(0) = x$, $\gamma(1) = y$ and $\gamma'(0) = v$. The inverse map exists since the manifold has a unique geodesic between any two points, which we denote as $\Exp_x^{-1}: \MCal \mapsto T_x \MCal$. Accordingly, we have $d_\MCal(x, y) = \|\Exp_x^{-1}(y)\|_x$ is the Riemannian distance induced by the exponential map. 

Finally, in contrast again to Euclidean spaces, we cannot compare the tangent vectors at different points $x, y \in \MCal$ since these vectors lie in different tangent spaces. To resolve this issue, it suffices to define a transport mapping that moves a tangent vector along the geodesics and also preserves the length and Riemannian metric $\langle \cdot, \cdot\rangle_x$; indeed, we can define a parallel transport $\Gamma_x^y: T_x \MCal \mapsto T_y \MCal$ such that the inner product between any $u, v \in T_x \MCal$ is preserved; i.e., $\langle u, v\rangle_x = \langle \Gamma_x^y(u), \Gamma_x^y(v)\rangle_y$. 

\paragraph{Riemannian min-max optimization and function classes.} We let $\MCal$ and $\NCal$ be Riemannian manifolds with unique geodesic and bounded sectional curvature and assume that the function $f: \MCal \times \NCal \mapsto \br$ is defined on the product of these manifolds. The regularity conditions that we impose on the function $f$ are as follows.   
\begin{definition}\label{def:Lip}
A function $f: \MCal \times \NCal \mapsto \br$ is \emph{geodesically $L$-Lipschitz} if for $\forall x, x' \in \MCal$ and $\forall y, y' \in \NCal$, the following statement holds true: $|f(x, y) - f (x', y')| \leq L(d_\MCal(x, x') + d_\NCal(y, y'))$. 
\label{def:smooth}
Additionally, if function $f$ is also differentiable, it is called \emph{geodesically $\ell$-smooth} if for $\forall x, x' \in \MCal$ and $\forall y, y' \in \NCal$, the following statement holds true, 
\begin{equation*}
\begin{array}{lll}
\|\grad_x f(x, y) - \Gamma_{x'}^x\grad_x f(x', y')\| & \leq & \ell(d_\MCal(x, x') + d_\NCal(y, y')), \\
\|\grad_y f(x, y) - \Gamma_{y'}^y\grad_y f(x', y')\| & \leq & \ell(d_\MCal(x, x') + d_\NCal(y, y')), 
\end{array}
\end{equation*}
where $(\grad_x f(x', y'), \grad_y f(x', y')) \in T_{x'} \MCal \times T_{y'} \NCal$ is the Riemannian gradient of $f$ at $(x', y')$, $\Gamma_{x'}^x$ is the parallel transport of $\MCal$ from $x'$ to $x$, and $\Gamma_{y'}^y$ is the parallel transport of $\NCal$ from $y'$ to $y$. 
\end{definition}
\begin{definition}\label{def:SCSC}
A function $f: \MCal \times \NCal \rightarrow \br$ is \emph{geodesically strongly-convex-strongly-concave} with the modulus $\mu > 0$ if the following statement holds true, 
\begin{equation*}
\begin{array}{llll}
f(x', y) & \geq & f(x, y) + \langle \subg_x f(x, y), \Exp_x^{-1}(x')\rangle_x + \tfrac{\mu}{2}(d_\MCal(x, x'))^2,  & \textnormal{ for each } y \in \NCal, \\
f(x, y') & \leq & f(x, y) + \langle \subg_y f(x, y), \Exp_y^{-1}(y')\rangle_y - \tfrac{\mu}{2}(d_\NCal(y, y'))^2,  & \textnormal{ for each } x \in \MCal.
\end{array}
\end{equation*}
where $(\subg_x f(x', y'), \subg_y f(x', y')) \in T_{x'} \MCal \times T_{y'} \NCal$ is a Riemannian subgradient of $f$ at a point $(x', y')$. A function $f$ is \emph{geodesically convex-concave} if the above holds true with $\mu = 0$. 
\end{definition}
Following standard conventions in Riemannian optimization~\citep{Zhang-2016-First, Alimisis-2020-Continuous, Zhang-2022-Minimax}, we make the following assumptions on the manifolds and objective functions:\footnote{In particular, our assumed upper and lower bounds $\kappa_{\min}, \kappa_{\max}$ guarantee that TCIs in Proposition~\ref{Prop:key-inequality} can be used in our analysis for proving finite-time convergence.}
\begin{assumption}\label{Assumption:basics} 
The objective function $f: \MCal \times \NCal \mapsto \br$ and manifolds $\MCal$ and $\NCal$ satisfy
\begin{enumerate} 
\item The diameter of the domain $\{(x, y) \in \MCal \times \NCal: -\infty < f(x, y) < +\infty\}$ is bounded by $D > 0$.
\item $\MCal,\NCal$ admit unique geodesic paths for any $(x, y), (x', y') \in \MCal \times \NCal$.
\item The sectional curvatures of $\MCal$ and $\NCal$ are both bounded in the range $[\kappa_{\min}, \kappa_{\max}]$ with $\kappa_{\min} \leq 0$. If $\kappa_{\max} > 0$, we assume that the diameter of manifolds is bounded by $\frac{\pi}{\sqrt{\kappa_{\max}}}$. 
\end{enumerate}
\end{assumption}
Under these conditions,~\citet{Zhang-2022-Minimax} proved an analog of Sion's minimax theorem~\citep{Sion-1958-General} in geodesic metric spaces. Formally, we have
\begin{equation*}
\max_{y \in \NCal} \min_{x \in \MCal} f(x, y) = \min_{x \in \MCal} \max_{y \in \NCal} f(x, y), 
\end{equation*}
which guarantees that there exists at least one global saddle point $(x^\star, y^\star) \in \MCal \times \NCal$ such that $\min_{x \in \MCal} f(x, y^\star) = f(x^\star, y^\star) = \max_{y \in \YCal} f(x^\star, y)$. Note that the unicity of geodesics assumption is algorithm-independent and is imposed for guaranteeing that a saddle-point solution always exist. Even though this rules out many manifolds of interest, there are still many manifolds that satisfy such conditions. More specifically, the Hadamard manifold (manifolds with non-positive curvature, $\kappa_{\max} = 0$) has a unique geodesic between any two points. This also becomes a common regularity condition in Riemannian optimization~\citep{Zhang-2016-First, Alimisis-2020-Continuous}. For any point $(\hat{\x}, \hat{\y}) \in \MCal \times \NCal$, the duality gap $f(\hat{x}, y^\star) - f(x^\star, \hat{y})$ thus gives an optimality criterion. 
\begin{definition}\label{def:opt-CC}
A point $(\hat{x}, \hat{y}) \in \MCal \times \NCal$ is an \emph{$\epsilon$-saddle point} of a geodesically convex-concave function $f(\cdot, \cdot)$ if $f(\hat{x}, y^\star) - f(x^\star, \hat{y}) \leq \epsilon$ where $(x^\star, y^\star) \in \MCal \times \NCal$ is a global saddle point.
\end{definition}
In the setting where $f$ is geodesically strongly-convex-strongly-concave with $\mu > 0$, it is not difficult to verify the uniqueness of a global saddle point $(x^\star, y^\star) \in \MCal \times \NCal$.  Then, we can consider the distance gap $(d(\hat{x}, x^\star))^2 + (d(\hat{y}, y^\star))^2$ as an optimality criterion for any point $(\hat{\x}, \hat{\y}) \in \MCal \times \NCal$. 
\begin{definition}\label{def:opt-SCSC}
A point $(\hat{x}, \hat{y}) \in \MCal \times \NCal$ is an $\epsilon$-saddle point of a geodesically strongly-convex-strongly-concave function $f(\cdot, \cdot)$ if $(d(\hat{x}, x^\star))^2 + (d(\hat{y}, y^\star))^2 \leq \epsilon$, where $(x^\star, y^\star) \in \MCal \times \NCal$ is a global saddle point. If $f$ is also geodesically $\ell$-smooth, we denote $\kappa = \frac{\ell}{\mu}$ as the condition number. 
\end{definition}
{ Given the above definitions, we can ask whether it is possible to find an $\epsilon$-saddle point efficiently or not. In this context,~\citet{Zhang-2022-Minimax} have answered this question in the affirmative for the setting where $f$ is geodesically $\ell$-smooth and geodesically convex-concave; indeed, they derive the convergence rate of Riemannian corrected extragradient (RCEG) method in terms of time-average iterates and also conjecture that \textit{RCEG does not guarantee convergence at a linear rate in terms of last iterates when $f$ is geodesically $\ell$-smooth and geodesically strongly-convex-strongly-concave, due to the existence of distance distortion}; see~\citet[Section~4.2]{Zhang-2022-Minimax}. Surprisingly, we show in Section~\ref{sec:RCEG} that RCEG with constant stepsize can achieve last-iterate convergence at a linear rate. Moreover, we establish the optimal convergence rates of stochastic RCEG for certain choices of stepsize for both geodesically convex-concave and geodesically strongly-convex-strongly-concave settings. }
\section{Riemannian Corrected Extragradient Method}\label{sec:RCEG}
In this section, we revisit the scheme of Riemannian corrected extragradient (RCEG) method proposed by~\citet{Zhang-2022-Minimax} and extend it to a stochastic algorithm that we refer to as \emph{stochastic RCEG}. We present our main results on an optimal last-iterate convergence guarantee for the geodesically strongly-convex-strongly-concave setting (both deterministic and stochastic) and a time-average convergence guarantee for the geodesically convex-concave setting (stochastic). This complements the time-average convergence guarantee for geodesically convex-concave setting (deterministic)~\citep[Theorem~4.1]{Zhang-2022-Minimax} and resolves an open problem posted in~\citet[Section~4.2]{Zhang-2022-Minimax}. 
\subsection{Algorithmic scheme}
\begin{table}[!t]
\begin{tabular}{cc}
\begin{minipage}{.53\textwidth}
\begin{algorithm}[H]\small
\caption{RCEG}\label{alg:RCEG}
\begin{algorithmic}
\State  \textbf{Input:} initial points $(x_0, y_0)$ and stepsizes $\eta > 0$. 
\For{$t = 0, 1, 2, \ldots, T-1$}
\State Query $(g_x^t, g_y^t)\gets(\grad_x f(x_t, y_t), \grad_y f(x_t, y_t))$, the Riemannian gradient of $f$ at a point $(x_t, y_t)$
\State $\hat{x}_t \leftarrow \Exp_{x_t}(-\eta \cdot g_x^t)$. 
\State $\hat{y}_t \leftarrow \Exp_{y_t}(\eta \cdot g_y^t)$. 
\State Query $(\hat{g}_x^t, \hat{g}_y^t)\gets(\grad_x f(\hat{x}_t, \hat{y}_t), \grad_y f(\hat{x}_t, \hat{y}_t))$, the Riemannian gradient of $f$ at a point $(\hat{x}_t, \hat{y}_t)$
\State $x_{t+1} \leftarrow \Exp_{\hat{x}_t}(-\eta \cdot \hat{g}_x^t + \Exp_{\hat{x}_t}^{-1}(x_t))$. 
\State $y_{t+1} \leftarrow \Exp_{\hat{y}_t}(\eta \cdot \hat{g}_y^t + \Exp_{\hat{y}_t}^{-1}(y_t))$.
\EndFor
\end{algorithmic}
\end{algorithm}
\end{minipage} &
\begin{minipage}{.47\textwidth}
\begin{algorithm}[H]\small
\caption{SRCEG}\label{alg:SRCEG}
\begin{algorithmic}
\State  \textbf{Input:} initial points $(x_0, y_0)$ and stepsizes $\eta > 0$. 
\For{$t = 0, 1, 2, \ldots, T-1$}
\State Query $(g_x^t, g_y^t)$ as a \textbf{noisy} estimator of Riemannian gradient of $f$ at a point $(x_t, y_t)$. 
\State $\hat{x}_t \leftarrow \Exp_{x_t}(-\eta \cdot g_x^t)$. 
\State $\hat{y}_t \leftarrow \Exp_{y_t}(\eta \cdot g_y^t)$. 
\State Query $(\hat{g}_x^t, \hat{g}_y^t)$ as a \textbf{noisy} estimator of Riemannian gradient of $f$ at a point $(\hat{x}_t,\hat{y}_t)$.  
\State $x_{t+1} \leftarrow \Exp_{\hat{x}_t}(-\eta \cdot \hat{g}_x^t + \Exp_{\hat{x}_t}^{-1}(x_t))$. 
\State $y_{t+1} \leftarrow \Exp_{\hat{y}_t}(\eta \cdot \hat{g}_y^t + \Exp_{\hat{y}_t}^{-1}(y_t))$.
\EndFor
\end{algorithmic}
\end{algorithm}
\end{minipage}
\end{tabular}
\end{table}
The recently proposed \emph{Riemannian corrected extragradient} (RCEG) method~\citep{Zhang-2022-Minimax} is a natural extension of the celebrated extragradient (EG) method to the Riemannian setting. Its scheme resembles that of EG in Euclidean spaces but employs a simple modification in the extrapolation step to accommodate the nonlinear geometry of Riemannian manifolds. Let us provide some intuition how such modifications work.

We start with a basic version of EG as follows, where $\MCal$ and $\NCal$ are classically restricted to be convex constraint sets in Euclidean spaces: 
\begin{equation}\label{def:EG}
\begin{array}{rclcrcl}
\hat{x}_t & \leftarrow & \proj_\MCal(x_t - \eta \cdot \nabla_x f(x_t, y_t)), & & \hat{y}_t & \leftarrow & \proj_\NCal(y_t + \eta \cdot \nabla_y f(x_t, y_t)), \\
x_{t+1} & \leftarrow & \proj_\MCal(x_t - \eta \cdot \nabla_x f(\hat{x}_t, \hat{y}_t)), & & y_{t+1} & \leftarrow & \proj_\NCal(y_t + \eta \cdot \nabla_y f(\hat{x}_t, \hat{y}_t)).  
\end{array}
\end{equation}
\begin{wrapfigure}{r}{0.55\textwidth}
\tikzset{every picture/.style={line width=0.75pt}} 

\begin{tikzpicture}[x=0.75pt,y=0.75pt,yscale=-0.6,xscale=0.6]

\draw  [fill={rgb, 255:red, 74; green, 144; blue, 226 }  ,fill opacity=0.31 ] (356.23,24.89) .. controls (417.67,13.14) and (583.36,102.34) .. (600,199.75) .. controls (571.84,144.33) and (439.28,159.83) .. (397.04,274.04) .. controls (341.99,154.79) and (206.46,140.78) .. (169.33,196.2) .. controls (134.77,61.84) and (311.42,26.57) .. (356.23,24.89) -- cycle ;
\draw [color={rgb, 255:red, 208; green, 2; blue, 27 }  ,draw opacity=1 ][fill={rgb, 255:red, 248; green, 231; blue, 28 }  ,fill opacity=1 ][line width=1.5]    (302,94) -- (389.08,112.18) ;
\draw [shift={(393,113)}, rotate = 191.79] [fill={rgb, 255:red, 208; green, 2; blue, 27 }  ,fill opacity=1 ][line width=0.08]  [draw opacity=0] (11.61,-5.58) -- (0,0) -- (11.61,5.58) -- cycle    ;
\draw [color={rgb, 255:red, 208; green, 2; blue, 27 }  ,draw opacity=1 ][line width=3]    (302,93.02) .. controls (341,103) and (363,102) .. (386,139) ;
\draw [shift={(386,139)}, rotate = 58.13] [color={rgb, 255:red, 208; green, 2; blue, 27 }  ,draw opacity=1 ][fill={rgb, 255:red, 208; green, 2; blue, 27 }  ,fill opacity=1 ][line width=3]      (0, 0) circle [x radius= 2.55, y radius= 2.55]   ;
\draw [shift={(302,93.02)}, rotate = 14.35] [color={rgb, 255:red, 208; green, 2; blue, 27 }  ,draw opacity=1 ][fill={rgb, 255:red, 208; green, 2; blue, 27 }  ,fill opacity=1 ][line width=3]      (0, 0) circle [x radius= 2.55, y radius= 2.55]   ;
\draw [color={rgb, 255:red, 144; green, 19; blue, 254 }  ,draw opacity=1 ][fill={rgb, 255:red, 248; green, 231; blue, 28 }  ,fill opacity=1 ][line width=1.5]    (386,140) -- (435.01,143.7) ;
\draw [shift={(439,144)}, rotate = 184.32] [fill={rgb, 255:red, 144; green, 19; blue, 254 }  ,fill opacity=1 ][line width=0.08]  [draw opacity=0] (9.29,-4.46) -- (0,0) -- (9.29,4.46) -- cycle    ;
\draw [color={rgb, 255:red, 144; green, 19; blue, 254 }  ,draw opacity=1 ][line width=1.5]  [dash pattern={on 1.69pt off 2.76pt}]  (302.08,89.8) -- (326.92,62.2) ;
\draw [shift={(328,61)}, rotate = 311.99] [color={rgb, 255:red, 144; green, 19; blue, 254 }  ,draw opacity=1 ][line width=1.5]      (0, 0) circle [x radius= 2.61, y radius= 2.61]   ;
\draw [shift={(301,91)}, rotate = 311.99] [color={rgb, 255:red, 144; green, 19; blue, 254 }  ,draw opacity=1 ][line width=1.5]      (0, 0) circle [x radius= 2.61, y radius= 2.61]   ;
\draw [color={rgb, 255:red, 144; green, 19; blue, 254 }  ,draw opacity=1 ][line width=1.5]    (387,137) -- (330.43,63.18) ;
\draw [shift={(328,60)}, rotate = 52.54] [fill={rgb, 255:red, 144; green, 19; blue, 254 }  ,fill opacity=1 ][line width=0.08]  [draw opacity=0] (9.29,-4.46) -- (0,0) -- (9.29,4.46) -- cycle    ;
\draw [color={rgb, 255:red, 144; green, 19; blue, 254 }  ,draw opacity=1 ][line width=1.5]    (386,140) -- (366.75,72.14) ;
\draw [shift={(365.65,68.29)}, rotate = 74.16] [fill={rgb, 255:red, 144; green, 19; blue, 254 }  ,fill opacity=1 ][line width=0.08]  [draw opacity=0] (9.29,-4.46) -- (0,0) -- (9.29,4.46) -- cycle    ;
\draw    (133,4) ;
\draw    (301,92) -- (301,93) ;
\draw [shift={(301,93)}, rotate = 90] [color={rgb, 255:red, 0; green, 0; blue, 0 }  ][fill={rgb, 255:red, 0; green, 0; blue, 0 }  ][line width=0.75]      (0, 0) circle [x radius= 3.69, y radius= 3.69]   ;
\draw [color={rgb, 255:red, 144; green, 19; blue, 254 }  ,draw opacity=1 ][line width=1.5]    (347,71) .. controls (374,55) and (374,103) .. (386,140) ;
\draw [shift={(347,71)}, rotate = 329.35] [color={rgb, 255:red, 144; green, 19; blue, 254 }  ,draw opacity=1 ][fill={rgb, 255:red, 144; green, 19; blue, 254 }  ,fill opacity=1 ][line width=1.5]      (0, 0) circle [x radius= 3.05, y radius= 3.05]   ;
\draw    (347,71) ;
\draw [shift={(347,71)}, rotate = 0] [color={rgb, 255:red, 0; green, 0; blue, 0 }  ][fill={rgb, 255:red, 0; green, 0; blue, 0 }  ][line width=0.75]      (0, 0) circle [x radius= 3.69, y radius= 3.69]   ;
\draw    (385,140) -- (386,140) ;
\draw [shift={(386,140)}, rotate = 0] [color={rgb, 255:red, 0; green, 0; blue, 0 }  ][fill={rgb, 255:red, 0; green, 0; blue, 0 }  ][line width=0.75]      (0, 0) circle [x radius= 3.69, y radius= 3.69]   ;
\draw  [draw opacity=0] (326.19,61.77) -- (376,70) -- (435.33,143.39) -- (385.52,135.16) -- cycle ; \draw   (326.19,61.77) -- (385.52,135.16)(345.92,65.03) -- (405.25,138.42)(365.65,68.29) -- (424.98,141.68) ; \draw   (326.19,61.77) -- (376,70)(345.1,85.17) -- (394.91,93.39)(364.01,108.56) -- (413.82,116.79)(382.92,131.96) -- (432.73,140.18) ; \draw    ;
\draw  [fill={rgb, 255:red, 144; green, 19; blue, 254 }  ,fill opacity=0.16 ] (411.33,20) -- (605.74,129.13) -- (386.28,214) -- (188.68,103.65) -- cycle ;
\draw  [fill={rgb, 255:red, 208; green, 2; blue, 27 }  ,fill opacity=0.28 ] (335.96,17) -- (611,100.61) -- (449.93,157.1) -- (398.87,175) -- (137,92.15) -- cycle ;

\draw (171.96,151.45) node [anchor=north west][inner sep=0.75pt]  [rotate=-359.68]  {$\mathcal{M}$};
\draw (271.96,81.45) node [anchor=north west][inner sep=0.75pt]  [font=\footnotesize,rotate=-359.68]  {$x_{t}$};
\draw (236.4,96.71) node [anchor=north west][inner sep=0.75pt]  [font=\footnotesize,rotate=-15.21,xslant=-0.05]  {$-\eta \ \mathrm{grad} f( x_{t} ,y_{t})$};
\draw (238.96,46.45) node [anchor=north west][inner sep=0.75pt]  [font=\scriptsize,rotate=-359.68]  {$\mathrm{Exp}_{\hat{x_{t}}}^{-1}( x_{t})$};
\draw (338.96,28.45) node [anchor=north west][inner sep=0.75pt]  [font=\footnotesize,rotate=-359.68]  {$x_{t+1}$};
\draw (424.65,171.82) node [anchor=north west][inner sep=0.75pt]  [font=\footnotesize,rotate=-339.21,xslant=-0.04]  {$-\eta \ \mathrm{grad} f\left(\widehat{x_{t}} ,\widehat{y_{t}}\right)$};
\draw (382.89,152.39) node [anchor=north west][inner sep=0.75pt]  [font=\footnotesize,rotate=-359.68]  {$\hat{x}_{t}$};

\end{tikzpicture}
\vspace{-0.5cm}
\end{wrapfigure}
Turning to the setting where $\MCal$ and $\NCal$ are Riemannian manifolds, the rather straightforward way to do the generalization is to replace the projection operator by the corresponding exponential map and the gradient by the corresponding Riemannian gradient. For the first line of Eq.~\eqref{def:EG}, this approach works and leads to the following updates: 
\begin{equation*}
\hat{x}_t \leftarrow \Exp_{x_t}(-\eta \cdot \grad_x f(x_t, y_t)), \quad \hat{y}_t \leftarrow \Exp_{y_t}(\eta \cdot \grad_y f(x_t, y_t)). 
\end{equation*}
However, we encounter some issues for the second line of Eq.~\eqref{def:EG}: The aforementioned approach leads to some problematic updates, $x_{t+1} \leftarrow \Exp_{x_t}(-\eta \cdot \grad_x f(\hat{x}_t, \hat{y}_t))$ and $y_{t+1} \leftarrow \Exp_{y_t}(\eta \cdot \grad_y f(\hat{x}_t, \hat{y}_t))$; indeed, the exponential maps $\Exp_{x_t}(\cdot)$ and $\Exp_{y_t}(\cdot)$ are defined from $T_{x_t} \MCal$ to $\MCal$ and from $T_{y_t} \NCal$ to $\NCal$ respectively. However, we have $-\grad_x f(\hat{x}_t, \hat{y}_t) \in T_{\hat{x}_t}\MCal$ and $\grad_y f(\hat{x}_t, \hat{y}_t) \in T_{\hat{y}_t}\NCal$. This motivates us to reformulate the second line of Eq.~\eqref{def:EG} as follows:
\begin{equation*}
x_{t+1} \leftarrow \proj_\MCal(\hat{x}_t - \eta \cdot \nabla_x f(\hat{x}_t, \hat{y}_t) + (x_t - \hat{x}_t)), \quad y_{t+1} \leftarrow \proj_\NCal(\hat{y}_t + \eta \cdot \nabla_y f(\hat{x}_t, \hat{y}_t) + (y_t - \hat{y}_t)).  
\end{equation*}
In the general setting of Riemannian manifolds, the terms $x_t - \hat{x}_t$ and $y_t - \hat{y}_t$ become $\Exp_{\hat{x}_t}^{-1}(x_t) \in T_{\hat{x}_t} \MCal$ and $\Exp_{\hat{y}_t}^{-1}(y_t) \in T_{\hat{y}_t} \NCal$. This observation yields the following updates: 
\begin{equation*}
x_{t+1} \leftarrow \Exp_{\hat{x}_t}(-\eta \cdot \grad_x f(\hat{x}_t, \hat{y}_t) + \Exp_{\hat{x}_t}^{-1}(x_t)), \quad \hat{y}_t \leftarrow \Exp_{\hat{y}_t}(\eta \cdot \grad_y f(\hat{x}_t, \hat{y}_t) + \Exp_{\hat{y}_t}^{-1}(y_t)). 
\end{equation*} 
We summarize the resulting RCEG method in Algorithm~\ref{alg:RCEG} and present the stochastic extension with noisy estimators of Riemannian gradients of $f$ in Algorithm~\ref{alg:SRCEG}. 

\subsection{Main results}
We present our main results on global convergence for Algorithms~\ref{alg:RCEG} and~\ref{alg:SRCEG}. 
To simplify the presentation, we treat separately the following two cases:
\begin{assumption}\label{Assumption:gscsc-smooth} 
The objective function $f$ is geodesically $\ell$-smooth and geodesically strongly-convex-strongly-concave with $\mu > 0$. 
\end{assumption}
\vspace{-0.3cm}
\begin{assumption}\label{Assumption:gcc-smooth} 
The objective function $f$ is geodesically $\ell$-smooth and geodesically convex-concave. 
\end{assumption}
Letting $(x^\star, y^\star) \in \MCal \times \NCal$ be a global saddle point of $f$ (which exists under either Assumption~\ref{Assumption:gscsc-smooth} or~\ref{Assumption:gcc-smooth}), we let $D_0 = (d_\MCal(x_0, x^\star))^2 + (d_\NCal(y_0, y^\star))^2 > 0$ and $\kappa = \ell/\mu$ for geodesically strongly-convex-strongly-concave setting. For simplicity of presentation, we also define a ratio $\tau(\cdot, \cdot)$ that measures how non-flatness changes in the spaces: $
\tau([\kappa_{\min}, \kappa_{\max}], c) = \tfrac{\overline{\xi}(\kappa_{\min}, c)}{\underline{\xi}(\kappa_{\max}, c)} \geq 1. 
$
 We summarize our results for Algorithm~\ref{alg:RCEG} in the following theorem. 
\begin{theorem}\label{Thm:RCEG-SCSC}
Given Assumptions~\ref{Assumption:basics} and \ref{Assumption:gscsc-smooth}, and letting $\eta = \min\{1/(2\ell\sqrt{\tau_0}), \underline{\xi}_0/(2\mu)\}$, there exists some $T > 0$ such that the output of Algorithm~\ref{alg:RCEG} satisfies that $(d(x_T, x^\star))^2 + (d(y_T, y^\star))^2 \leq \epsilon$ (i.e., an $\epsilon$-saddle point of $f$ in Definition~\ref{def:opt-SCSC}) and the total number of Riemannian gradient evaluations is bounded by
\begin{equation*}
O\left(\left(\kappa\sqrt{\tau_0} + \frac{1}{\underline{\xi}_0}\right)\log\left(\frac{D_0}{\epsilon}\right)\right), 
\end{equation*}
where $\tau_0 = \tau([\kappa_{\min}, \kappa_{\max}], D) \geq 1$ measures how non-flatness changes in $\MCal$ and $\NCal$ and $\underline{\xi}_0 = \underline{\xi}(\kappa_{\max}, D) \leq 1$ is properly defined in Proposition~\ref{Prop:key-inequality}.  
\end{theorem}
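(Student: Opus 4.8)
The plan is to sandwich the duality-gap functional evaluated at the extrapolated iterate,
\[
G_t \;:=\; f(\hat{x}_t, y^\star) - f(x^\star, \hat{y}_t),
\]
between quantities built from the squared Riemannian distances $R_t := (d_\MCal(x_t,x^\star))^2 + (d_\NCal(y_t,y^\star))^2$ (so $R_0 = D_0$), $\hat{R}_t := (d_\MCal(\hat{x}_t,x^\star))^2 + (d_\NCal(\hat{y}_t,y^\star))^2$, and $S_t := (d_\MCal(\hat{x}_t,x_t))^2 + (d_\NCal(\hat{y}_t,y_t))^2$. For the \emph{lower} bound: since $(x^\star,y^\star)$ is a saddle point, $x^\star$ minimizes $f(\cdot,y^\star)$ and $y^\star$ maximizes $f(x^\star,\cdot)$, so $\grad_x f(x^\star,y^\star)=0$ and $\grad_y f(x^\star,y^\star)=0$; applying Definition~\ref{def:SCSC} at $(x^\star,y^\star)$ with comparison points $\hat{x}_t, \hat{y}_t$ gives $f(\hat{x}_t,y^\star)\ge f(x^\star,y^\star)+\tfrac\mu2 (d_\MCal(\hat{x}_t,x^\star))^2$ and $f(x^\star,\hat{y}_t)\le f(x^\star,y^\star)-\tfrac\mu2 (d_\NCal(\hat{y}_t,y^\star))^2$, hence $G_t\ge \tfrac\mu2 \hat{R}_t$.

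For the \emph{upper} bound, first apply Definition~\ref{def:SCSC} at the single point $(\hat{x}_t,\hat{y}_t)$ with comparison points $x^\star, y^\star$; the $f(\hat{x}_t,\hat{y}_t)$ terms cancel and one gets
\[
G_t \;\le\; \langle \grad_y f(\hat{x}_t,\hat{y}_t), \Exp_{\hat{y}_t}^{-1}(y^\star)\rangle_{\hat{y}_t} - \langle \grad_x f(\hat{x}_t,\hat{y}_t), \Exp_{\hat{x}_t}^{-1}(x^\star)\rangle_{\hat{x}_t} - \tfrac\mu2 \hat{R}_t .
\]
Now convert the inner products to distances using the RCEG update. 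Because $\hat{x}_t=\Exp_{x_t}(-\eta g_x^t)$, reversing the geodesic gives $\Exp_{\hat{x}_t}^{-1}(x_t)=\eta\,\Gamma_{x_t}^{\hat{x}_t} g_x^t$, so the corrected step reads $-\eta\,\hat{g}_x^t = \Exp_{\hat{x}_t}^{-1}(x_{t+1}) - \Exp_{\hat{x}_t}^{-1}(x_t)$, and likewise on $\NCal$. Apply the second inequality of Proposition~\ref{Prop:key-inequality} to the geodesic triangle $(\hat{x}_t, x_{t+1}, x^\star)$ to upper bound $\langle\Exp_{\hat{x}_t}^{-1}(x_{t+1}),\Exp_{\hat{x}_t}^{-1}(x^\star)\rangle$, and the first inequality to the triangle $(\hat{x}_t, x_t, x^\star)$ to lower bound $\langle\Exp_{\hat{x}_t}^{-1}(x_t),\Exp_{\hat{x}_t}^{-1}(x^\star)\rangle$ (symmetrically on $\NCal$); crucially the common ``distorted'' side $d_\MCal(\hat{x}_t,x^\star)$ appears with opposite signs and \emph{cancels}, leaving, after relaxing the curvature factors to $\underline\xi_0=\underline\xi(\kappa_{\max},D)$, $\overline\xi_0=\overline\xi(\kappa_{\min},D)$,
\[
G_t \;\le\; \tfrac{1}{2\eta}(R_t-R_{t+1}) \;+\; \tfrac{\overline\xi_0}{2\eta}\bigl((d_\MCal(\hat{x}_t,x_{t+1}))^2+(d_\NCal(\hat{y}_t,y_{t+1}))^2\bigr) \;-\; \tfrac{\underline\xi_0}{2\eta}S_t \;-\; \tfrac\mu2 \hat{R}_t .
\]
Geodesic $\ell$-smoothness (Definition~\ref{def:smooth}) gives $d_\MCal(\hat{x}_t,x_{t+1})=\eta\|\hat{g}_x^t-\Gamma_{x_t}^{\hat{x}_t} g_x^t\|\le \eta\ell\,(d_\MCal(\hat{x}_t,x_t)+d_\NCal(\hat{y}_t,y_t))$, so the middle two terms combine into $\bigl(2\eta\ell^2\overline\xi_0-\tfrac{\underline\xi_0}{2\eta}\bigr)S_t$, which is $\le 0$ exactly when $\eta\le 1/(2\ell\sqrt{\tau_0})$ since $\tau_0=\overline\xi_0/\underline\xi_0$.

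Combining the two bounds yields $\mu\hat{R}_t \le \tfrac{1}{2\eta}(R_t-R_{t+1})$, i.e.\ $R_{t+1}\le R_t-2\eta\mu\hat{R}_t$; in particular $R_{t+1}\le R_t$. To obtain a genuine contraction I would relate $\hat{R}_t$ back to $R_{t+1}$: the triangle inequality in $\MCal\times\NCal$ gives $\sqrt{R_{t+1}}\le \sqrt{\hat{R}_t}+\sqrt{(d_\MCal(\hat{x}_t,x_{t+1}))^2+(d_\NCal(\hat{y}_t,y_{t+1}))^2}$, and bounding the second term once more by $\ell$-smoothness — together with $\|\grad f(\hat{x}_t,\hat{y}_t)\|\le \ell(d_\MCal(\hat{x}_t,x^\star)+d_\NCal(\hat{y}_t,y^\star))$ (using $\grad f(x^\star,y^\star)=0$) and $\eta\ell\le\tfrac12$ — shows $\hat{R}_t\ge c_0 R_{t+1}$ for an absolute constant $c_0\in(0,1)$. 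Hence $(1+2c_0\eta\mu)R_{t+1}\le R_t$, so $R_{t+1}\le(1-c_0\eta\mu)R_t$ whenever $2c_0\eta\mu\le1$ (guaranteed by $\eta\le\underline\xi_0/(2\mu)\le1/(2\mu)$). Unrolling, $R_T\le(1-c_0\eta\mu)^T D_0$, so $R_T\le\epsilon$ once $T\ge \tfrac{1}{c_0\eta\mu}\log(D_0/\epsilon)$; since $\tfrac{1}{\eta\mu}=\max\{2\ell\sqrt{\tau_0}/\mu,\,2/\underline\xi_0\}=\Theta(\kappa\sqrt{\tau_0}+1/\underline\xi_0)$ and RCEG uses two gradient evaluations per iteration, the number of Riemannian gradient evaluations is $O\bigl((\kappa\sqrt{\tau_0}+1/\underline\xi_0)\log(D_0/\epsilon)\bigr)$.

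The main obstacle is exactly the step flagged by~\citet{Zhang-2022-Minimax}: on a curved manifold $(d_\MCal(x_{t+1},x^\star))^2$ has no closed-form ``law of cosines'' expansion in terms of $(d_\MCal(x_t,x^\star))^2$, so one cannot imitate~\eqref{eq:square-expansion} directly. The idea that rescues a linear rate is to never compare consecutive iterate distances directly, but instead to route everything through the gap $G_t$ and to apply Proposition~\ref{Prop:key-inequality} to the \emph{pair} of geodesic triangles hinged at $\hat{x}_t$, so that the distorted side $d_\MCal(\hat{x}_t,x^\star)$ cancels and the curvature survives only as the multiplicative constants $\underline\xi_0,\overline\xi_0$ — hence the factors $\sqrt{\tau_0}$ and $1/\underline\xi_0$ in the rate, but no exponential slowdown. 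A more routine but still delicate point is the step-size bookkeeping: $\eta$ must be small enough (the $1/(2\ell\sqrt{\tau_0})$ branch) to make the discretization error $\le 0$ and small enough (the $\underline\xi_0/(2\mu)$ branch) to keep the contraction factor in $(0,1)$, and one must check that the iterates stay in the finite-value domain of diameter $D$ so that Proposition~\ref{Prop:key-inequality} applies with $c\le D$; tuning the absolute constants so that everything holds uniformly, including in the near-flat regime $\tau_0\approx1$, takes some care.
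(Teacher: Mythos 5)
Your proposal follows essentially the same route as the paper: the heart of the argument is identical to the paper's key lemma --- upper-bound the gap $f(\hat{x}_t,y^\star)-f(x^\star,\hat{y}_t)$ via geodesic strong convexity-concavity at $(\hat{x}_t,\hat{y}_t)$, rewrite the gradient inner products through the corrected update, apply the two trigonometric comparison inequalities to the triangles hinged at $\hat{x}_t$ (resp.\ $\hat{y}_t$) so the distorted sides $d_\MCal(\hat{x}_t,x^\star)^2$, $d_\NCal(\hat{y}_t,y^\star)^2$ cancel, and control $d_\MCal(\hat{x}_t,x_{t+1})$ by $\eta\ell$ times the extrapolation displacement via geodesic smoothness. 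The one place you diverge is the final contraction step: the paper lower-bounds $\hat{R}_t \geq \tfrac12 R_t - S_t$ (relating the extrapolated point to the \emph{previous} iterate by the elementary inequality $d(x_t,x^\star)^2\le 2d(\hat x_t,x_t)^2+2d(\hat x_t,x^\star)^2$) and absorbs the extra $S_t$ into the already-nonpositive $S_t$ coefficient, which immediately gives $R_{t+1}\le(1-\tfrac{\mu\eta}{2})R_t$; you instead relate $\hat{R}_t\ge c_0 R_{t+1}$ to the \emph{next} iterate, which requires the additional smoothness chain $\sqrt{S_t}\le 2\eta\ell(\sqrt{S_t}+\sqrt{\hat R_t})$ and hence a strictly positive gap $1-2\eta\ell>0$. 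With the stated stepsize $\eta\le 1/(2\ell\sqrt{\tau_0})$ and $\tau_0$ close to $1$ that gap can vanish, so your $c_0$ degenerates exactly in the near-flat regime you flag; the paper's version avoids this entirely (at the cost of needing the slightly smaller branch $1/(4\ell\sqrt{\tau_0})$ in its own proof to keep the $S_t$ coefficient nonpositive after absorbing $\mu\eta S_t$). Both variants yield the claimed $O\bigl((\kappa\sqrt{\tau_0}+1/\underline{\xi}_0)\log(D_0/\epsilon)\bigr)$ complexity, but you should either shrink the smoothness branch of the stepsize by a constant factor or switch to the paper's backward comparison to make $c_0$ uniformly bounded away from zero.
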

\begin{remark}
Theorem~\ref{Thm:RCEG-SCSC} illustrates the last-iterate convergence of Algorithm~\ref{alg:RCEG} for solving geodesically strongly-convex-strongly-concave problems, thereby resolving an open problem delineated by~\citet{Zhang-2022-Minimax}. Further, the dependence on $\kappa$ and $1/\epsilon$ cannot be improved since it matches the lower bound established for min-max optimization problems in Euclidean spaces~\citep{Zhang-2021-Lower}. However, we believe that the dependence on $\tau_0$ and $\underline{\xi}_0$ is not tight, and it is of interest to either improve the rate or establish a lower bound for general Riemannian min-max optimization.
\end{remark}
\begin{remark}
The current theoretical analysis covers local geodesic strong-convex-strong-concave settings. The key ingredient is how to define the local region; indeed, if we say the set of $\{(x, y): d_\MCal(x, x^\star) \leq \delta, d_\NCal(y_t, y^\star) \leq \delta\}$ is a local region where the function is geodesic strong-convex-strong-concave. Then, the set of $\{(x, y): (d_\MCal(x, x^\star)^2 + d_\NCal(y_t, y^\star)^2) \leq \delta^2\}$ must be contained in the above local region and the objective function is also geodesic strong-convex-strong-concave. If $(x_0, y_0) \in \{(x, y): (d_\MCal(x, x^\star)^2 + d_\NCal(y_t, y^\star)^2) \leq \delta^2\}$, our theoretical analysis guarantees the last-iterate linear convergence rate. Such argument and definition of local region were standard for min-max optimization in the Euclidean setting; see~\citet[Assumption 2.1]{Liang-2019-Interaction}. For an important optimization problem that is globally geodesically strongly-convex-strongly-concave, we refer to Appendix~\ref{app:examples} where \textit{Robust matrix Karcher mean problem} is indeed the desired one.
\end{remark}
In the scheme of SRECG, we highlight that $(g_x^t, g_y^t)$ and $(\hat{g}_x^t, \hat{g}_y^t)$ are noisy estimators of Riemannian gradients of $f$ at $(x_t, y_t)$ and $(\hat{x}_t, \hat{y}_t)$. It is necessary to impose the conditions such that these estimators are unbiased and has bounded variance. By abuse of notation, we assume that 
\begin{equation}\label{def:noisy-model}
\begin{array}{lcl}
g_x^t = \grad_x f(x_t, y_t) + \xi_x^t, & & g_y^t = \grad_y f(x_t, y_t) + \xi_y^t,  \\
\hat{g}_x^t = \grad_x f(\hat{x}_t, \hat{y}_t) + \hat{\xi}_x^t, & & \hat{g}_y^t = \grad_y f(\hat{x}_t, \hat{y}_t) + \hat{\xi}_y^t. 
\end{array}
\end{equation}
where the noises $(\xi_x^t, \xi_y^t)$ and $(\hat{\xi}_x^t, \hat{\xi}_y^t)$ are independent and satisfy that 
\begin{equation}\label{def:noisy-bound}
\begin{array}{lclcl}
\EE[\xi_x^t] = 0, & & \EE[\xi_y^t] = 0, & & \EE[\|\xi_x^t\|^2 + \|\xi_y^t\|^2] \leq \sigma^2, \\
\EE[\hat{\xi}_x^t] = 0, & & \EE[\hat{\xi}_y^t] = 0, & & \EE[\|\hat{\xi}_x^t\|^2 + \|\hat{\xi}_y^t\|^2] \leq \sigma^2. 
\end{array}
\end{equation}
We are ready to summarize our results for Algorithm~\ref{alg:SRCEG} in the following theorems. 
\begin{theorem}\label{Thm:SRCEG-SCSC}
Given Assumptions~\ref{Assumption:basics} and \ref{Assumption:gscsc-smooth}, letting Eq.~\eqref{def:noisy-model} and Eq.~\eqref{def:noisy-bound} hold with $\sigma > 0$ and letting $\eta > 0$ satisfy $\eta = \min\{\frac{1}{24\ell\sqrt{\tau_0}}, \frac{\underline{\xi}_0}{2\mu}, \tfrac{2(\log(T) + \log(\mu^2 D_0 \sigma^{-2}))}{\mu T}\}$, there exists some $T > 0$ such that the output of Algorithm~\ref{alg:SRCEG} satisfies that $\EE[(d(x_T, x^\star))^2 + (d(y_T, y^\star))^2] \leq \epsilon$ and the total number of noisy Riemannian gradient evaluations is bounded by
\begin{equation*}
O\left(\left(\kappa\sqrt{\tau_0} + \frac{1}{\underline{\xi}_0}\right)\log\left(\frac{D_0}{\epsilon}\right) + \frac{\sigma^2\overline{\xi}_0}{\mu^2\epsilon}\log\left(\frac{1}{\epsilon}\right)\right), 
\end{equation*}
where $\tau_0 = \tau([\kappa_{\min}, \kappa_{\max}], D) \geq 1$ measures how non-flatness changes in $\MCal$ and $\NCal$ and $\underline{\xi}_0 = \underline{\xi}(\kappa_{\max}, D) \leq 1$ is properly defined in Proposition~\ref{Prop:key-inequality}.  
\end{theorem}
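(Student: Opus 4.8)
The plan is to lift the deterministic argument of Theorem~\ref{Thm:RCEG-SCSC} to the noisy setting, carrying a conditional expectation through each step so that the zero-mean noise of Eq.~\eqref{def:noisy-model} cancels in the cross-terms and leaves only a residual of order $\eta^2\sigma^2\overline{\xi}_0$. Write $D_t = (d_\MCal(x_t,x^\star))^2 + (d_\NCal(y_t,y^\star))^2$, let $\mathcal{F}_t$ be the $\sigma$-algebra generated by all randomness strictly before the first query of step $t$, and let $\mathcal{G}_t\supseteq\mathcal{F}_t$ additionally contain $(g_x^t,g_y^t)$, so that $(\hat{x}_t,\hat{y}_t)$ is $\mathcal{G}_t$-measurable but $(\hat{\xi}_x^t,\hat{\xi}_y^t)$ is not. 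First I would derive the one-step inequality exactly as in the deterministic proof: apply Proposition~\ref{Prop:key-inequality} to the geodesic triangles $(\hat{x}_t,x_{t+1},x^\star)$ on the $\overline{\xi}$-side and $(\hat{x}_t,x_t,x^\star)$ on the $\underline{\xi}$-side (and the $\NCal$-counterparts), combine with the correction identity $\Exp_{\hat{x}_t}^{-1}(x_{t+1}) = -\eta\hat{g}_x^t + \Exp_{\hat{x}_t}^{-1}(x_t)$, lower-bound the resulting inner-product terms by the duality gap, and invoke geodesic strong convexity-concavity (Definition~\ref{def:SCSC}) both to extract a $-\eta\mu\,\Sigma_t$ term, with $\Sigma_t := (d_\MCal(\hat{x}_t,x^\star))^2 + (d_\NCal(\hat{y}_t,y^\star))^2$, and to use $f(\hat{x}_t,y^\star) - f(x^\star,\hat{y}_t)\ge\tfrac{\mu}{2}\Sigma_t$. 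This yields, with $S_t := (d_\MCal(\hat{x}_t,x_t))^2 + (d_\NCal(\hat{y}_t,y_t))^2$, $S_{t+1}$ analogously, and a noise cross-term $R_t := 2\eta\langle\hat{\xi}_x^t,\Exp_{\hat{x}_t}^{-1}(x^\star)\rangle - 2\eta\langle\hat{\xi}_y^t,\Exp_{\hat{y}_t}^{-1}(y^\star)\rangle$,
\begin{equation*}
D_{t+1} \;\le\; D_t + \overline{\xi}_0\, S_{t+1} - \underline{\xi}_0\, S_t - 2\eta\big(f(\hat{x}_t,y^\star) - f(x^\star,\hat{y}_t)\big) - \eta\mu\,\Sigma_t + R_t .
\end{equation*}

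Next I would estimate the geometric quantities. Since $\hat{x}_t = \Exp_{x_t}(-\eta g_x^t)$ and $\Exp_{\hat{x}_t}^{-1}(x_t) = \eta\,\Gamma_{x_t}^{\hat{x}_t}(g_x^t)$, we get $S_t = \eta^2(\|g_x^t\|^2+\|g_y^t\|^2)$ and $S_{t+1} = \eta^2(\|\Gamma_{x_t}^{\hat{x}_t}(g_x^t) - \hat{g}_x^t\|^2 + \|\Gamma_{y_t}^{\hat{y}_t}(g_y^t) - \hat{g}_y^t\|^2)$; feeding in the noise model and geodesic $\ell$-smoothness (Definition~\ref{def:Lip}) bounds $S_{t+1}$ by $O(\eta^2\ell^2 S_t) + O(\eta^2)(\|\xi^t\|^2+\|\hat{\xi}^t\|^2)$, where $\|\xi^t\|^2 := \|\xi_x^t\|^2+\|\xi_y^t\|^2$ and similarly for $\hat{\xi}^t$, so for $\eta\le\tfrac{1}{24\ell\sqrt{\tau_0}}$ the combination $\overline{\xi}_0 S_{t+1} - \underline{\xi}_0 S_t$ is at most $O(\eta^2\overline{\xi}_0)(\|\xi^t\|^2+\|\hat{\xi}^t\|^2)$. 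A triangle-inequality and Young bound plus $\ell$-smoothness likewise give $\Sigma_t \ge \tfrac12 D_t - O(\eta^2\ell^2 D_t) - O(\eta^2)\|\xi^t\|^2$, which under the same step-size constraint turns $-\eta\mu\Sigma_t$ into $-\tfrac{\eta\mu}{4}D_t$ up to $O(\eta^3\mu)\|\xi^t\|^2$. Dropping the nonnegative gap term, one arrives, for every $\eta\le\min\{\tfrac1{24\ell\sqrt{\tau_0}},\tfrac{\underline{\xi}_0}{2\mu}\}$ (the second bound keeps the contraction factor in $(0,1)$), at
\begin{equation*}
D_{t+1} \;\le\; \Big(1-\tfrac{\eta\mu}{4}\Big)D_t + C\eta^2\overline{\xi}_0\big(\|\xi^t\|^2+\|\hat{\xi}^t\|^2\big) + R_t
\end{equation*}
with a universal $C>0$. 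Taking $\EE[\,\cdot\mid\mathcal{G}_t]$ kills $R_t$ by unbiasedness, and then $\EE[\,\cdot\mid\mathcal{F}_t]$ together with the variance bounds of Eq.~\eqref{def:noisy-bound} and the tower property produce the clean recursion $\EE[D_{t+1}\mid\mathcal{F}_t] \le (1-\tfrac{\eta\mu}{4})D_t + 2C\eta^2\overline{\xi}_0\sigma^2$.

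It then remains to unroll and pick $(\eta,T)$. Taking total expectation and summing, $\EE[D_T] \le (1-\tfrac{\eta\mu}{4})^T D_0 + \tfrac{8C\overline{\xi}_0\sigma^2}{\mu}\eta$. With the prescribed $\eta = \min\{\tfrac1{24\ell\sqrt{\tau_0}},\tfrac{\underline{\xi}_0}{2\mu},\tfrac{2(\log T + \log(\mu^2 D_0\sigma^{-2}))}{\mu T}\}$ I would split into two regimes: when the third term is active, $(1-\tfrac{\eta\mu}{4})^T D_0 \le e^{-\eta\mu T/4}D_0$ is $O(\sigma^2/(\mu^2 T))$ and the residual is $O(\sigma^2\overline{\xi}_0\log(\cdot)/(\mu^2 T))$, so $\EE[D_T]\le\epsilon$ after $T = O\big(\tfrac{\sigma^2\overline{\xi}_0}{\mu^2\epsilon}\log\tfrac1\epsilon\big)$ steps; when one of the first two terms is active, the residual already lies below $\epsilon$ and plain geometric contraction needs $T = O\big((\kappa\sqrt{\tau_0}+\tfrac1{\underline{\xi}_0})\log\tfrac{D_0}{\epsilon}\big)$ steps, as in Theorem~\ref{Thm:RCEG-SCSC}. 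Adding the two bounds and using the two gradient queries per iteration gives the stated complexity; the $\log$ inside $\eta$ is exactly the device of~\citet{Kotsalis-2022-Simple} that upgrades a naive $\sigma^2/\epsilon^2$ rate to $\sigma^2/\epsilon$ up to logarithmic factors.

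The step I expect to be the main obstacle is the bookkeeping forced by the nonlinear geometry in the presence of noise. Because $\hat{g}_x^t$ is a noisy gradient at the \emph{random} point $\hat{x}_t$ and the correction vector $\Exp_{\hat{x}_t}^{-1}(x_t)=\eta\Gamma_{x_t}^{\hat{x}_t}(g_x^t)$ itself carries the first-query noise, one must first expand squared norms of sums of tangent vectors inside the honest inner-product space $T_{\hat{x}_t}\MCal$, and only then transport each piece back to the fixed points $x_t, x^\star$ via Proposition~\ref{Prop:key-inequality}, taking care at every transfer to use the $\overline{\xi}$-side for upper bounds and the $\underline{\xi}$-side for lower bounds so that the curvature constants compose into the benign ratio $\tau_0=\overline{\xi}_0/\underline{\xi}_0$ rather than compounding. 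Guaranteeing that the noise residual scales like $\eta^2\sigma^2\overline{\xi}_0$ rather than $\eta\sigma^2$ or worse — which is what makes the final rate optimal in $\epsilon$ — hinges on precisely this discipline together with the smallness of $\eta$ forced by the smoothness constraint; everything else is a routine adaptation of the deterministic analysis and of the Euclidean stochastic-EG argument of~\citet{Kotsalis-2022-Simple}.
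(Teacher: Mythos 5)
Your proposal is correct and follows essentially the same route as the paper: the same one-step inequality built from the trigonometric comparison inequalities and the exponential-map identities, the same smoothness-plus-noise bound giving a residual of order $\eta^2\overline{\xi}_0(\|\xi^t\|^2+\|\hat{\xi}^t\|^2)$, the same triangle-inequality relation between $\Sigma_t$ and $D_t$, expectation to kill the linear noise terms, and the same two-regime analysis of the prescribed step size. The only deviations are cosmetic (you bound $S_t$ via the gradient norm at the saddle rather than absorbing it directly into the $S_t$-coefficient, which costs you a contraction factor of $1-\tfrac{\eta\mu}{4}$ instead of the paper's $1-\tfrac{\eta\mu}{2}$ and would require adjusting the constant $2$ in the logarithmic branch of $\eta$ to $4$), none of which affects the stated complexity.
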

\begin{theorem}\label{Thm:SRCEG-CC}
Given Assumptions~\ref{Assumption:basics} and \ref{Assumption:gcc-smooth} and assume that Eq.~\eqref{def:noisy-model} and Eq.~\eqref{def:noisy-bound} hold with $\sigma > 0$ and let $\eta > 0$ satisfies that $\eta = \min\{\frac{1}{4\ell\sqrt{\tau_0}}, \tfrac{1}{\sigma}\sqrt{\tfrac{D_0}{\overline{\xi}_0 T}}\}$, there exists some $T > 0$ such that the output of Algorithm~\ref{alg:SRCEG} satisfies that $\EE[f(\bar{x}_T, y^\star) - f(x^\star, \bar{y}_T)] \leq \epsilon$ and the total number of noisy Riemannian gradient evaluations is bounded by
\begin{equation*}
O\left(\frac{\ell D_0\sqrt{\tau_0}}{\epsilon} + \frac{\sigma^2\overline{\xi}_0}{\epsilon^2}\right), 
\end{equation*}
where $\tau_0 = \tau([\kappa_{\min}, \kappa_{\max}], D)$ measures how non-flatness changes in $\MCal$ and $\NCal$ and $\overline{\xi}_0 = \overline{\xi}(\kappa_{\min}, D) \geq 1$ is properly defined in Proposition~\ref{Prop:key-inequality}. The time-average iterates $(\bar{x}_T, \bar{y}_T) \in \MCal \times \NCal$ can be computed by $(\bar{x}_0, \bar{y}_0) = (0, 0)$ and the inductive formula: $\bar{x}_{t+1} = \Exp_{\bar{x}_t}(\tfrac{1}{t+1} \cdot \Exp_{\bar{x}_t}^{-1}(\hat{x}_t))$ and $\bar{y}_{t+1} = \Exp_{\bar{y}_t}(\tfrac{1}{t+1} \cdot \Exp_{\bar{y}_t}^{-1}(\hat{y}_t))$ for all $t = 0, 1, \ldots, T-1$. 
\end{theorem}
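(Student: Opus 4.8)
The plan is to run the textbook extragradient energy argument, replacing the Euclidean square-expansion~\eqref{eq:square-expansion} by the trigonometric comparison inequalities of Proposition~\ref{Prop:key-inequality}, and controlling the stochastic error by a careful stepsize choice exactly as in the Euclidean stochastic-EG analysis of~\citet{Kotsalis-2022-Simple}. Write $z_t=(x_t,y_t)$, $\hat z_t=(\hat x_t,\hat y_t)$, $z^\star=(x^\star,y^\star)$, $\delta_t=d_\MCal^2(x_t,x^\star)+d_\NCal^2(y_t,y^\star)$ and $\mathrm{gap}_t=f(\hat x_t,y^\star)-f(x^\star,\hat y_t)$. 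The correction step of Algorithm~\ref{alg:SRCEG} gives $\Exp_{\hat x_t}^{-1}(x_{t+1})=-\eta\hat g_x^t+\Exp_{\hat x_t}^{-1}(x_t)$ with $\Exp_{\hat x_t}^{-1}(x_t)=\eta\,\Gamma_{x_t}^{\hat x_t}g_x^t$, so $d(\hat x_t,x_{t+1})=\eta\|\hat g_x^t-\Gamma_{x_t}^{\hat x_t}g_x^t\|$ and $d(\hat x_t,x_t)=\eta\|g_x^t\|$ (and likewise on $\NCal$, with a sign flip that does not affect these norms). First I would apply part~2 of Proposition~\ref{Prop:key-inequality} to the geodesic triangle $(\hat x_t,x_{t+1},x^\star)$, taking the \emph{step} $d(\hat x_t,x_{t+1})$ as the side multiplied by $\overline{\xi}$ so that $d^2(\hat x_t,x^\star)$ keeps unit coefficient, and part~1 to $(x_t,\hat x_t,x^\star)$, taking the step $d(\hat x_t,x_t)$ as the side multiplied by $\underline{\xi}$ so that $d^2(x_t,x^\star)$ keeps unit coefficient; summing the $x$- and $y$-versions, the $d^2(\hat z_t,z^\star)$ terms and the stray inner products cancel, leaving
\begin{align*}
\delta_{t+1}&\le\delta_t+2\eta\langle\hat g_x^t,\Exp_{\hat x_t}^{-1}(x^\star)\rangle-2\eta\langle\hat g_y^t,\Exp_{\hat y_t}^{-1}(y^\star)\rangle\\
&\quad+\overline{\xi}_0\,\eta^2\big(\|\hat g_x^t-\Gamma_{x_t}^{\hat x_t}g_x^t\|^2+\|\hat g_y^t-\Gamma_{y_t}^{\hat y_t}g_y^t\|^2\big)-\underline{\xi}_0\,\eta^2\big(\|g_x^t\|^2+\|g_y^t\|^2\big).
\end{align*}

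Next I would bound the quadratic group by geodesic $\ell$-smoothness: $\|\hat g_x^t-\Gamma_{x_t}^{\hat x_t}g_x^t\|\le\ell\eta(\|g_x^t\|+\|g_y^t\|)+\|\hat\xi_x^t\|+\|\xi_x^t\|$ and similarly for $y$, so that group is at most $12\overline{\xi}_0\ell^2\eta^2(\|g_x^t\|^2+\|g_y^t\|^2)+3\overline{\xi}_0(\|\hat\xi^t\|^2+\|\xi^t\|^2)$, where $\|\xi^t\|^2=\|\xi_x^t\|^2+\|\xi_y^t\|^2$ and similarly for $\hat\xi^t$. The stepsize condition $\eta\le1/(4\ell\sqrt{\tau_0})$ forces $12\overline{\xi}_0\ell^2\eta^2\le\underline{\xi}_0$, so the $\eta^2(\|g_x^t\|^2+\|g_y^t\|^2)$ contribution is non-positive and may be dropped — this is precisely where the curvature ratio $\tau_0=\overline{\xi}_0/\underline{\xi}_0$ enters the stepsize. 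For the surviving inner products, geodesic convexity-concavity (Definition~\ref{def:SCSC} with $\mu=0$) gives $\langle\grad_xf(\hat z_t),\Exp_{\hat x_t}^{-1}(x^\star)\rangle\le f(x^\star,\hat y_t)-f(\hat z_t)$ and $-\langle\grad_yf(\hat z_t),\Exp_{\hat y_t}^{-1}(y^\star)\rangle\le -(f(\hat x_t,y^\star)-f(\hat z_t))$; substituting $\hat g^t=\grad f(\hat z_t)+\hat\xi^t$ turns the two inner-product terms into $-2\eta\,\mathrm{gap}_t$ plus two linear noise terms, yielding the clean one-step bound
\[
\delta_{t+1}\le\delta_t-2\eta\,\mathrm{gap}_t+2\eta\langle\hat\xi_x^t,\Exp_{\hat x_t}^{-1}(x^\star)\rangle-2\eta\langle\hat\xi_y^t,\Exp_{\hat y_t}^{-1}(y^\star)\rangle+3\overline{\xi}_0\,\eta^2\big(\|\hat\xi^t\|^2+\|\xi^t\|^2\big),
\]
which reduces to~\citet[Theorem~4.1]{Zhang-2022-Minimax} when $\sigma=0$.

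Taking expectations, I would use that $\hat z_t$ is determined by $z_t$ and the first-stage noise $\xi^t$, which is independent of $\hat\xi^t$; hence the two linear noise terms have mean zero, while $\EE[\|\hat\xi^t\|^2+\|\xi^t\|^2]\le2\sigma^2$. Dropping $\EE[\delta_T]\ge0$ and telescoping over $t=0,\dots,T-1$ gives $2\eta\sum_t\EE[\mathrm{gap}_t]\le D_0+6\overline{\xi}_0\eta^2\sigma^2T$. Dividing by $2\eta T$ and applying Jensen's inequality along the inductive geodesic average — a one-line induction on $\bar x_{t+1}=\Exp_{\bar x_t}(\tfrac1{t+1}\Exp_{\bar x_t}^{-1}(\hat x_t))$ using geodesic convexity of $x\mapsto f(x,y^\star)$ shows $f(\bar x_T,y^\star)\le\tfrac1T\sum_tf(\hat x_t,y^\star)$, and geodesic concavity of $y\mapsto f(x^\star,y)$ gives $f(x^\star,\bar y_T)\ge\tfrac1T\sum_tf(x^\star,\hat y_t)$, so $f(\bar x_T,y^\star)-f(x^\star,\bar y_T)\le\tfrac1T\sum_t\mathrm{gap}_t$ — gives $\EE[f(\bar x_T,y^\star)-f(x^\star,\bar y_T)]\le D_0/(2\eta T)+3\overline{\xi}_0\eta\sigma^2$. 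Plugging in $\eta=\min\{1/(4\ell\sqrt{\tau_0}),\,\sigma^{-1}\sqrt{D_0/(\overline{\xi}_0T)}\}$ and using $1/\eta\le4\ell\sqrt{\tau_0}+\sigma\sqrt{\overline{\xi}_0T/D_0}$ bounds the right-hand side by $O(\ell D_0\sqrt{\tau_0}/T+\sigma\sqrt{\overline{\xi}_0D_0/T})$; setting this $\le\epsilon$ gives $T=O(\ell D_0\sqrt{\tau_0}/\epsilon+\sigma^2\overline{\xi}_0D_0/\epsilon^2)$, and since each round makes two gradient queries the stated complexity follows.

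The main obstacle is the one-step inequality. One must pick the side assignments in the two comparison inequalities so that the ``large'' distances $d(\hat z_t,z^\star)$ keep unit coefficient and cancel — otherwise a non-telescoping term $\propto(\overline{\xi}_0-\underline{\xi}_0)\,d^2(\hat z_t,z^\star)$ survives and destroys the argument — while tracking that the relevant tangent vectors live in $T_{\hat x_t}\MCal$ rather than $T_{x_t}\MCal$, so that parallel transport appears and the leftover second-order terms assemble into the square difference $\overline{\xi}_0\|\hat g-\Gamma g\|^2-\underline{\xi}_0\|g\|^2$ that the stepsize renders non-positive. One also has to verify that all iterates and extrapolated points stay in a region where Proposition~\ref{Prop:key-inequality} applies, so that $\overline{\xi}(\kappa_{\min},\cdot)$ and $\underline{\xi}(\kappa_{\max},\cdot)$ may be replaced by $\overline{\xi}_0$ and $\underline{\xi}_0$; this uses that $\grad f$ vanishes at the saddle point and is therefore bounded along the trajectory by geodesic $\ell$-smoothness. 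The remaining steps — taking expectations, telescoping, Jensen, and optimizing $\eta$ — are routine and parallel the Euclidean stochastic-EG argument.
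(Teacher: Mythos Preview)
Your proposal is correct and essentially identical to the paper's proof: the paper packages your one-step inequality as a separate lemma (the same two applications of Proposition~\ref{Prop:key-inequality} at $\hat z_t$, the same $\ell$-smoothness bound on $\|\hat g-\Gamma g\|$, and the same stepsize check $12\overline{\xi}_0\ell^2\eta^2\le\underline{\xi}_0$), then telescopes, applies the geodesic Jensen inequality for the inductive average, and optimizes $\eta$ to reach the identical bound $\EE[\mathrm{gap}]\le D_0/(2\eta T)+3\overline{\xi}_0\eta\sigma^2$. Your closing caveat about keeping iterates in the region where the comparison inequalities apply is handled in the paper simply by the standing diameter bound $D$ in Assumption~\ref{Assumption:basics}, so no separate boundedness argument is needed.
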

\begin{remark}
Theorem~\ref{Thm:SRCEG-SCSC} presents the last-iterate convergence rate of Algorithm~\ref{alg:SRCEG} for solving geodesically strongly-convex-strongly-concave problems while Theorem~\ref{Thm:SRCEG-CC} gives the time-average convergence rate when the function $f$ is only assumed to be geodesically convex-concave. Note that we carefully choose the stepsizes such that our upper bounds match the lower bounds established for stochastic min-max optimization problems in Euclidean spaces~\citep{Juditsky-2011-Solving, Fallah-2020-Optimal, Kotsalis-2022-Simple}, in terms of the dependence on $\kappa$, $1/\epsilon$ and $\sigma^2$, up to log factors.  
\end{remark}
\paragraph{Discussions:} The last-iterate linear convergence rate in terms of Riemannian metrics is only limited to geodesically strongly convex-concave cases but other results, e.g., the average-iterate sublinear convergence rate, are derived under more mild conditions. This is consistent with classical results in the Euclidean setting where geodesic convexity reduces to convexity; indeed, the last-iterate linear convergence rate in terms of squared Euclidean norm is only obtained for strongly convex-concave cases. As such, our setting is not restrictive. Moreover, ~\citet{Zhang-2022-Minimax} showed that the existence of a global saddle point is only guaranteed under the geodesically convex-concave assumption. For geodesically nonconvex-concave or geodesically nonconvex-nonconcave cases, a global saddle point might not exist and new optimality notions are required before algorithmic design. This question remains open in the Euclidean setting and is beyond the scope of this paper. However, we remark that an interesting class of robustification problems are nonconvex-nonconcave min-max problems in the Euclidean setting can be geodesically convex-concave in the Riemannian setting; see Appendix~\ref{app:examples}.
\section{Experiments}\label{sec:exp}
We present numerical experiments on the task of robust principal component analysis (RPCA) for symmetric positive definite (SPD) matrices. In particular, we compare the performance of Algorithm~\ref{alg:RCEG} and~\ref{alg:SRCEG} with different outputs, i.e., the last iterate $(x_T, y_T)$ versus the time-average iterate $(\bar{x}_T, \bar{y}_T)$ (see the precise definition in Theorem~\ref{Thm:SRCEG-CC}). Note that our implementations of both algorithms are based on the \textsc{manopt} package~\citep{Boumal-2014-Manopt}. All the experiments were implemented in MATLAB R2021b on a workstation with a 2.6 GHz Intel Core i7 and 16GB of memory. Due to space limitations, some additional experimental results are deferred to Appendix~\ref{sec:appendix-exp}. 

\paragraph{Experimental setup.} The problem of RPCA~\citep{Candes-2011-Robust, Harandi-2017-Dimensionality} can be formulated as the Riemannian min-max optimization problem with an SPD manifold and a sphere manifold. Formally, we have
\begin{equation}\label{prob:RPCA}
\max_{M \in \MCal_{\textnormal{PSD}}^d} \min_{x \in \SCal^d}\left\{ -x^{\top}Mx - \frac{\alpha}{n}\sum_{i=1}^{n}d(M, M_i)\right\}. 
\end{equation}
In this formulation, $\alpha > 0$ denotes the penalty parameter, $\{M_i\}_{i \in [n]}$ is a sequence of given data SPD matrices, $\MCal_{\textnormal{PSD}}^d = \{M \in \br^{d \times d}: M \succ 0, M = M^\top\}$ denotes the SPD manifold, $\SCal^d = \{x \in \br^d: \|x\| = 1\}$ denotes the sphere manifold and $d(\cdot, \cdot): \MCal_{\textnormal{PSD}}^d \times \MCal_{\textnormal{PSD}}^d \mapsto \br$ is the Riemannian distance induced by the exponential map on the SPD manifold $\MCal_{\textnormal{PSD}}^d$. As demonstrated by~\citet{Zhang-2022-Minimax}, the problem of RPCA is nonconvex-nonconcave from a Euclidean perspective but is \textit{locally geodesically strongly-convex-strongly-concave} and satisfies most of the assumptions that we make in this paper. In particular, the SPD manifold is complete with sectional curvature in $[-\frac{1}{2}, 1]$~\citep{Criscitiello-2022-Accelerated} and the sphere manifold is complete with sectional curvature of $1$. Other reasons why we use such example are: (i) it is a classical one in ML; (ii)~\citet{Zhang-2022-Minimax} also uses this example and observes the linear convergence behavior; (iii) the numerical results show that the unicity of geodesics assumption may not be necessary in practice; and (iv) this is an application where both min and max sides are done on Riemannian manifolds.

Following the previous works of~\citet{Zhang-2022-Minimax} and~\citet{Han-2022-Riemannian}, we generate a sequence of data matrices $M_i$ satisfying that their eigenvalues are in the range of $[0.2,  4.5]$. In our experiment, we fix $\alpha=1.0$ and also vary the problem dimension $d \in \{25, 50, 100\}$. The evaluation metric is set as gradient norm. We set $n=40$ and $n=200$ in Figure~\ref{fig:exp-deterministic} and~\ref{fig:exp-stoc}. For RCEG, we set $\eta = \frac{1}{2\ell}$ where $\ell > 0$ is selected via grid search. For SRCEG, we set $\eta_t = \min\{\frac{1}{2\ell}, \frac{a}{t}\}$ where $\ell, a > 0$ are selected via grid search. Additional results on the effect of stepsize are summarized in Appendix~\ref{sec:appendix-exp}. 

\paragraph{Experimental results.} Figure~\ref{fig:exp-deterministic} summarizes the effects of different outputs for RCEG; indeed, RCEG-last and RCEG-avg refer to Algorithm~\ref{alg:RCEG} with last iterate and time-average iterate respectively. It is clear that the last iterate of RCEG consistently exhibits linear convergence to an optimal solution in all the settings, verifying our theoretical results in Theorem~\ref{Thm:RCEG-SCSC}. In contrast, the average iterate of RCEG converges much slower than the last iterate of RCEG. The possible reason is that the problem of RPCA is \textit{only} locally geodesically strongly-convex-strongly-concave and averaging with the iterates generated during early stage will significantly slow down the convergence of RCEG. 

Figure~\ref{fig:exp-stoc} presents the comparison between SRCEG (with either last iterate or time-average iterate) and RCEG with last-iterate; here, SRCEG-last and SRCEG-avg refer to Algorithm~\ref{alg:SRCEG} with last iterate and time-average iterate respectively. We observe that SRCEG with either last iterate or average iterate converge faster than RCEG at the early stage and all of them finally converge to an optimal solution. This demonstrates the effectiveness and efficiency of SRCEG in practice. It is also worth mentioning that the difference between last-iterate convergence and time-average-iterate convergence is not as significant as in the deterministic setting. This is possibly because the technique of averaging help cancels the negative effect of imperfect information~\citep{Kingma-2014-Adam, Yazici-2018-Unusual}. 
\begin{figure*}[!t]
\centering
\includegraphics[width=0.32\textwidth]{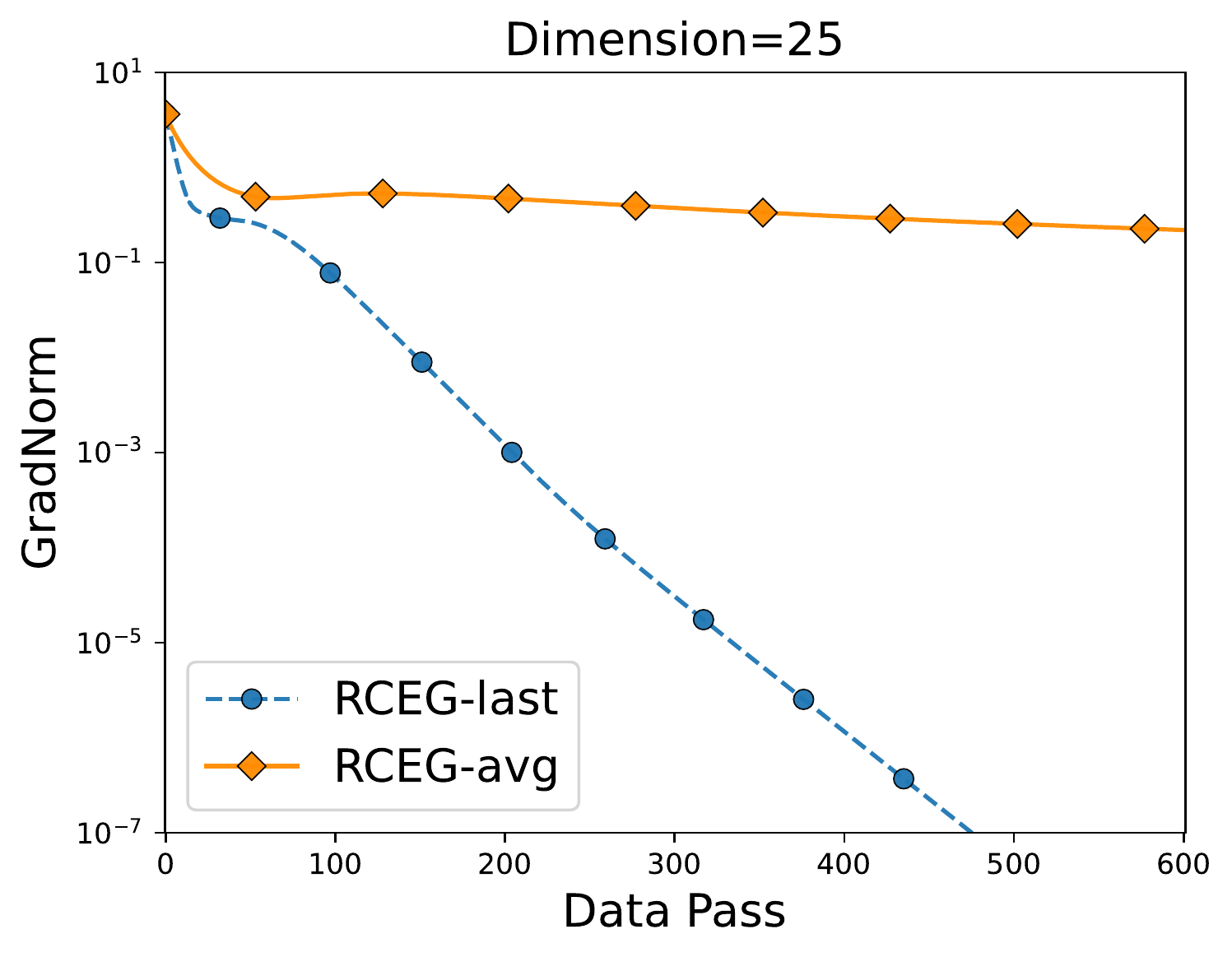}
\includegraphics[width=0.32\textwidth]{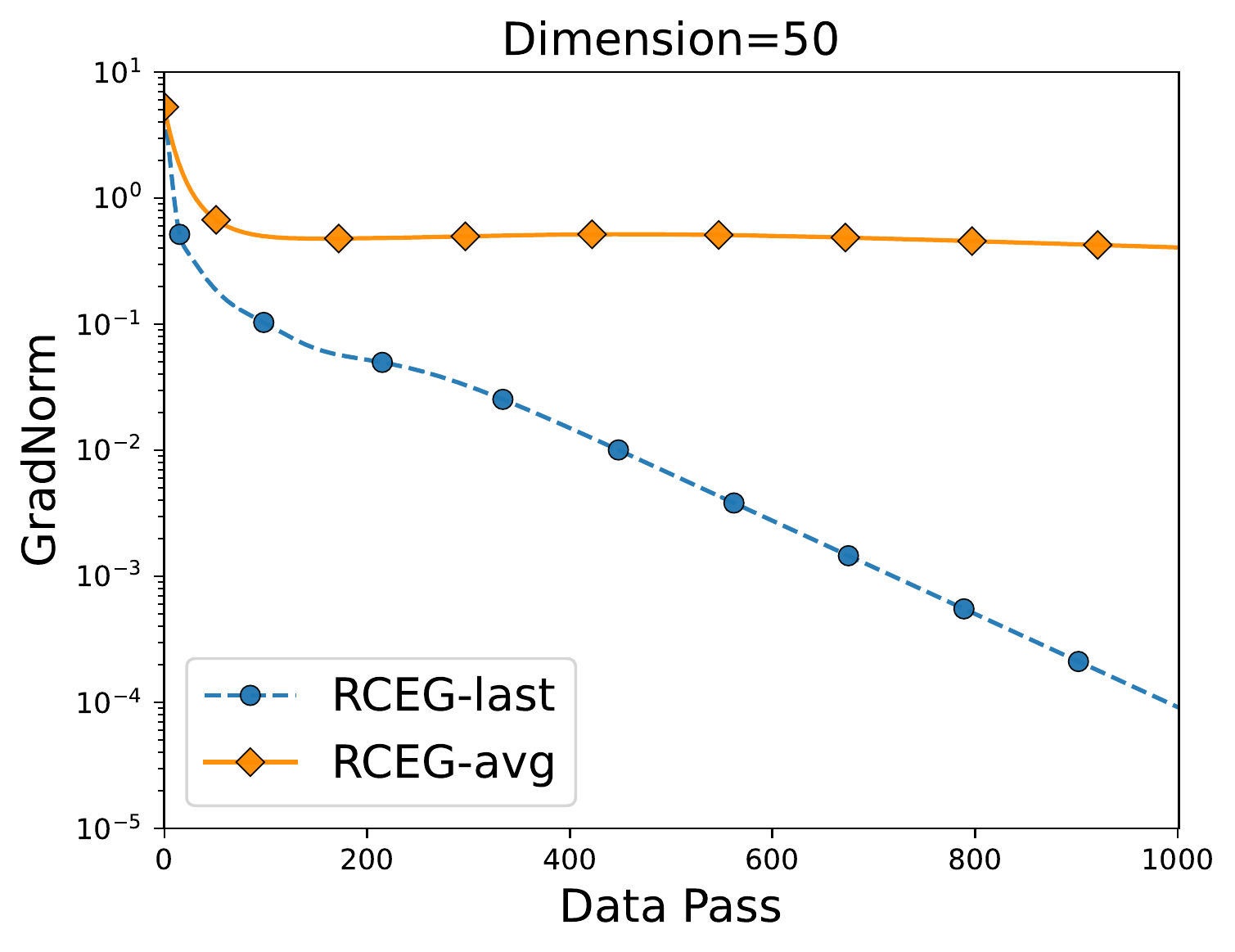}
\includegraphics[width=0.32\textwidth]{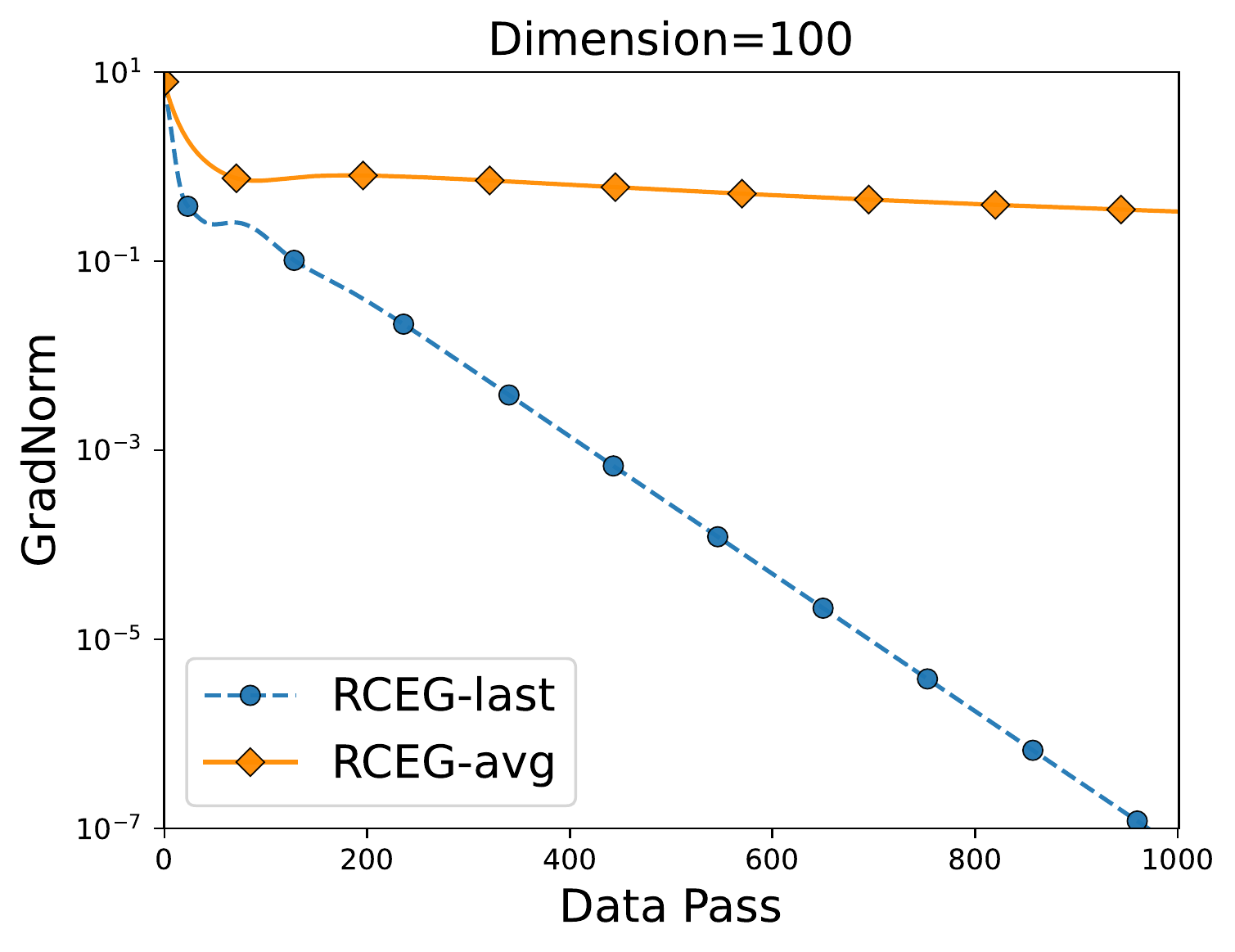}
\caption{\footnotesize{Comparison of last iterate (RCEG-last) and time-average iterate (RCEG-avg) for solving the RPCA problem in Eq.~\eqref{prob:RPCA} with different problem dimensions $d \in \{25, 50, 100\}$. The horizontal axis represents the number of data passes and the vertical axis represents gradient norm. }}\vspace*{-.5em}
\label{fig:exp-deterministic}
\end{figure*}
\begin{figure*}[!t]
\centering
\includegraphics[width=0.24\textwidth]{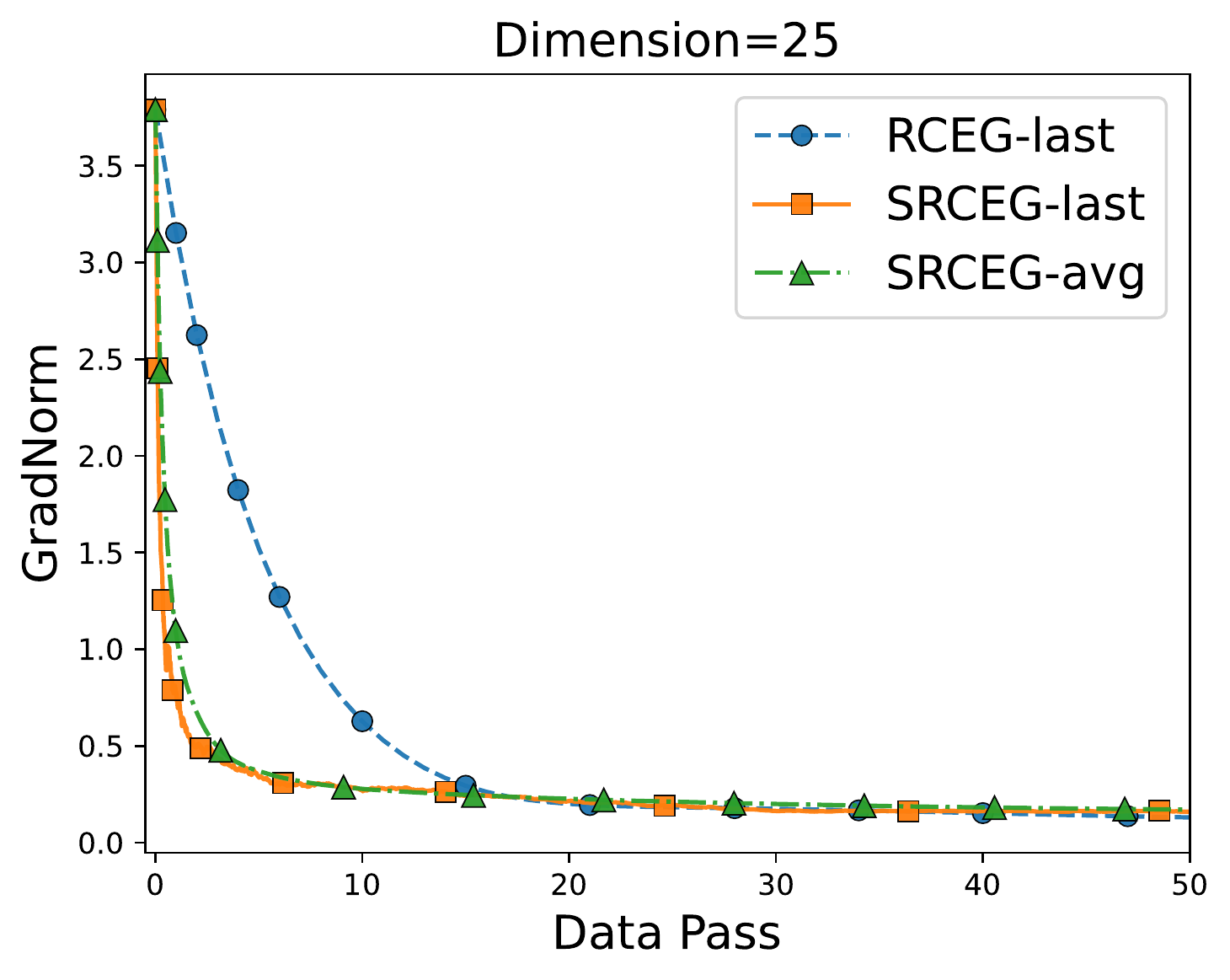}
\includegraphics[width=0.24\textwidth]{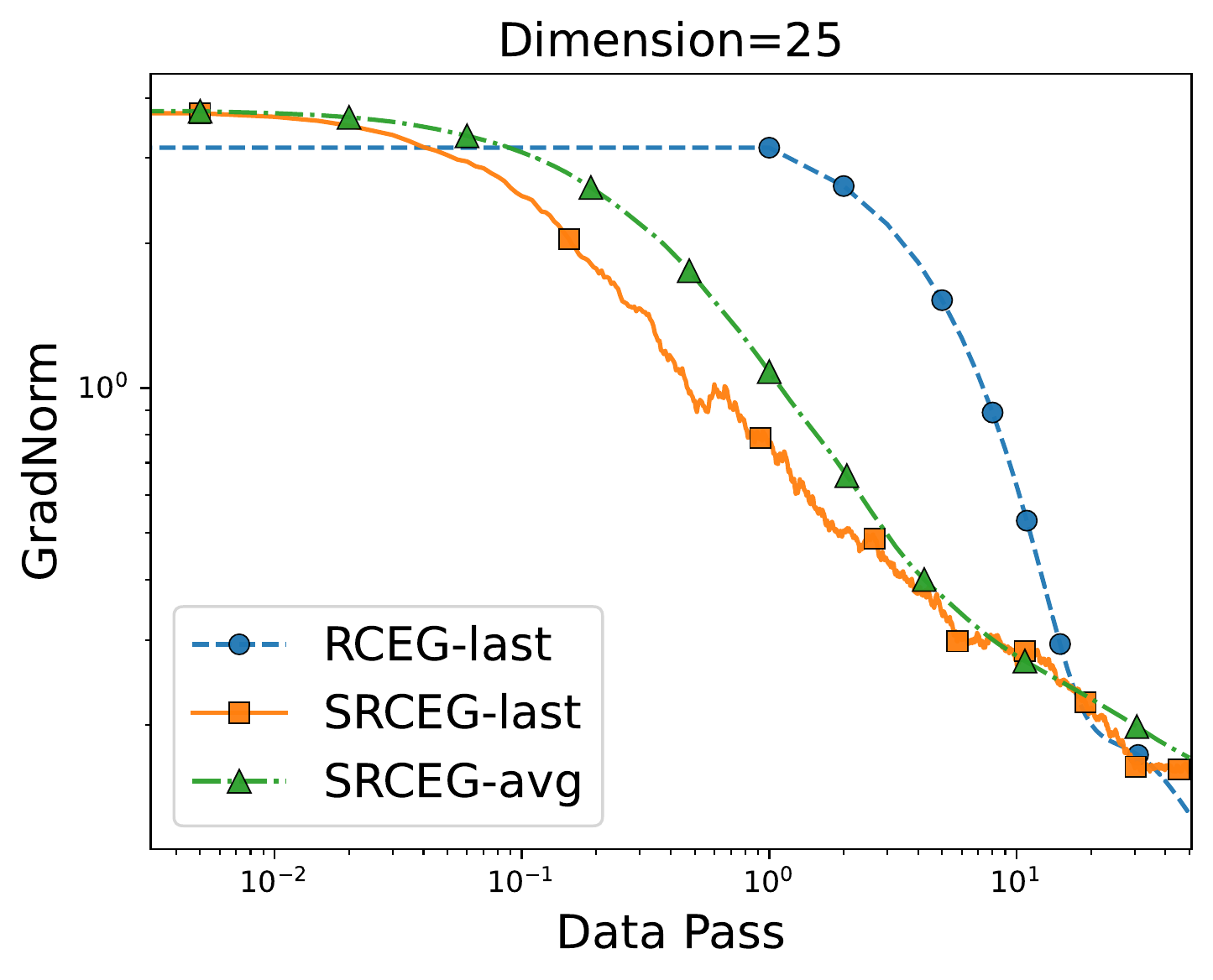}
\includegraphics[width=0.24\textwidth]{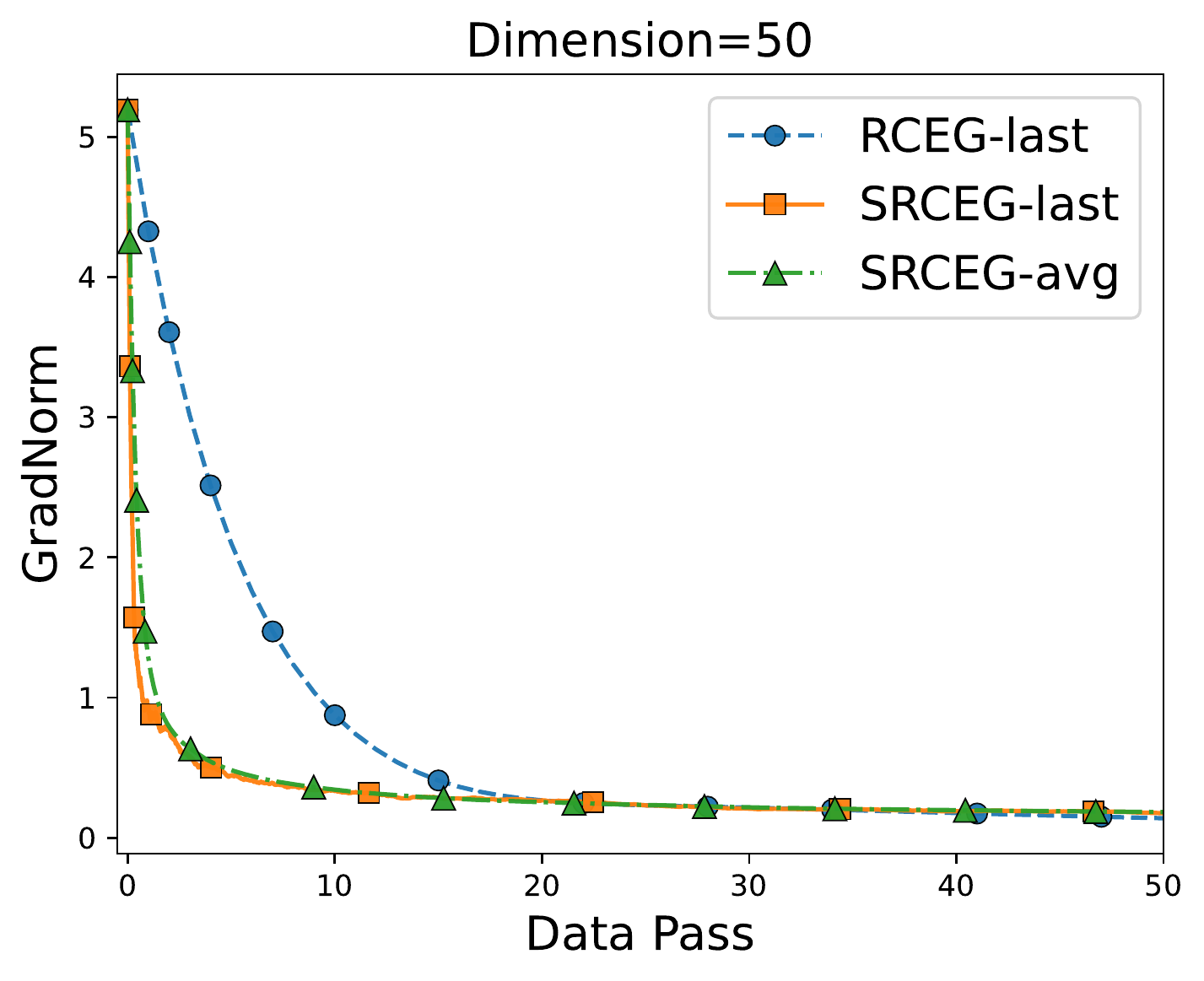}
\includegraphics[width=0.24\textwidth]{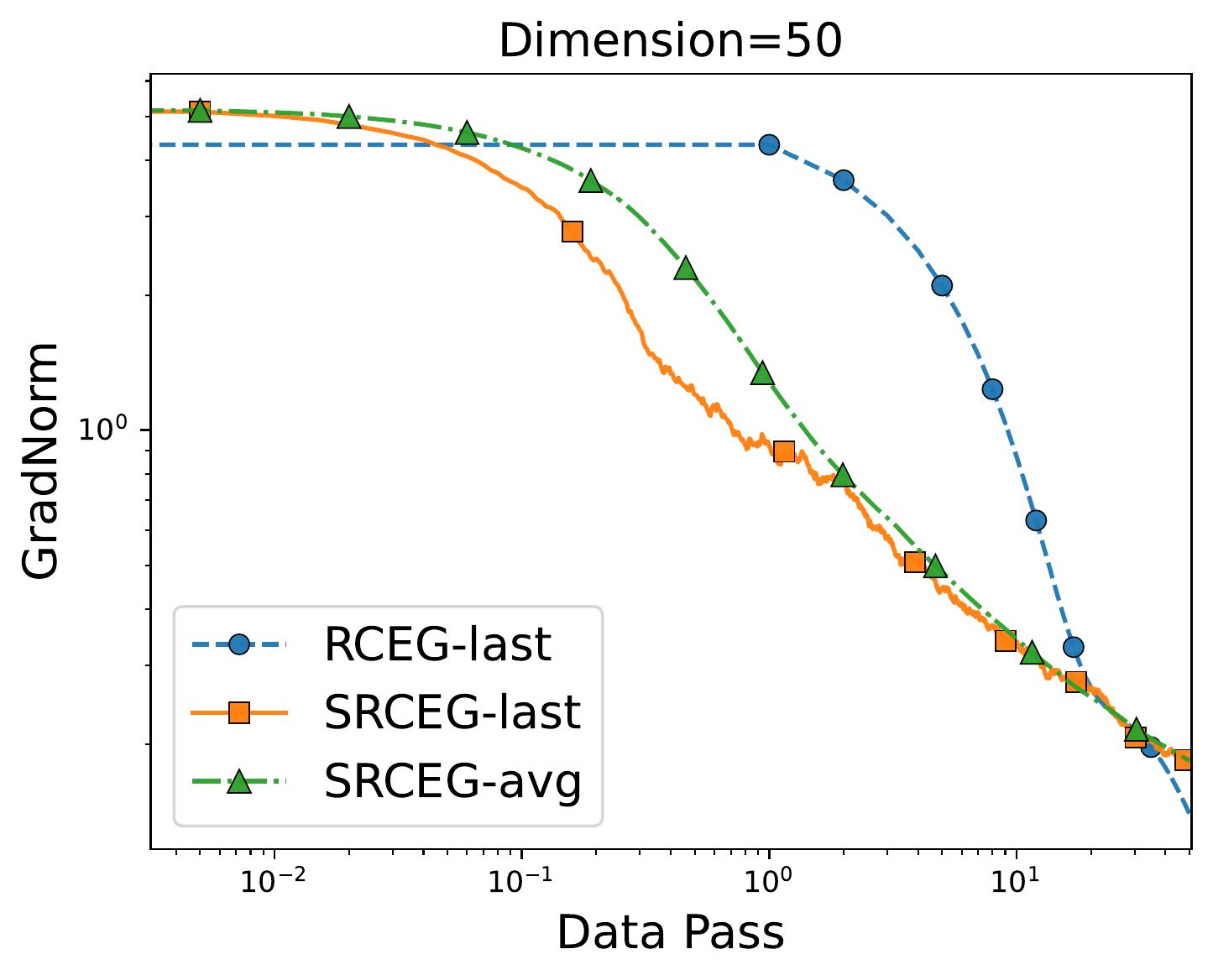}
\caption{\footnotesize{Comparison of RCEG and SRCEG for solving the RPCA problem in Eq.~\eqref{prob:RPCA} with different problem dimensions $d \in \{25, 50\}$. The horizontal axis is the number of data passes and the vertical axis is gradient norm. } }
\label{fig:exp-stoc}\vspace*{-2em}
\end{figure*}
\section{Conclusions}\label{sec:conclu}
Inspired broadly by the structure of the complex competition that arises in many applications of robust optimization in ML, we focus on the problem of min-max optimization in the pure Riemannian setting (where both min and max player are constrained in a smooth manifold). Answering the open question of \citet{Zhang-2022-Minimax} for the geodesically (strongly) convex-concave case, we showed that the Riemannian correction technique for EG matches  the linear last-iterate complexity of their Euclidean counterparts in terms of accuracy and conditional number of objective for both deterministic and stochastic case. Additionally, we provide near-optimal guarantees for both smooth and non-smooth min-max optimization via Riemannian EG and GDA for the simple convex-concave case.

As a consequence of this work numerous open problems emerge; one immediate open question for future work is to explore whether the dependence on the curvature constant is also tight. Additionally, another generalization of interest would be to consider the performance of RCEG in the case of Riemannian Monotone Variational inequalities (RMVI) and examine the generalization of~\citet{Zhang-2022-Minimax} existence proof. Finally, there has been recent work in proving last-iterate convergence in the convex-concave setting via Sum-Of-Squares techniques~\citep{Cai-2022-Tight}. It would be interesting to examine how one could leverage this machinery in a non-Euclidean but geodesic-metric-friendly framework.

\section*{Acknowledgments}
This work was supported in part by the Mathematical Data Science program of the Office of Naval Research under grant number N00014-18-1-2764 and by the Vannevar Bush Faculty Fellowship program under grant number N00014-21-1-2941. The work of Michael I. Jordan is also partially supported by NSF Grant IIS-1901252. Emmanouil V. Vlatakis-Gkaragkounis is grateful for financial support by the Google-Simons Fellowship, Pancretan Association of America and Simons Collaboration on Algorithms and Geometry. This project was completed while he was a visiting research fellow at the Simons Institute for the Theory of Computing. Additionally, he would like to acknowledge the following series of NSF-CCF grants under the numbers 1763970/2107187/1563155/1814873. 

\bibliographystyle{plainnat}
\bibliography{ref}

\begin{thebibliography}{137}
\providecommand{\natexlab}[1]{#1}
\providecommand{\url}[1]{\texttt{#1}}
\expandafter\ifx\csname urlstyle\endcsname\relax
  \providecommand{\doi}[1]{doi: #1}\else
  \providecommand{\doi}{doi: \begingroup \urlstyle{rm}\Url}\fi

\bibitem[Abernethy et~al.(2021)Abernethy, Lai, and
  Wibisono]{Abernethy-2021-Last}
J.~Abernethy, K.~A. Lai, and A.~Wibisono.
\newblock Last-iterate convergence rates for min-max optimization: Convergence
  of {H}amiltonian gradient descent and consensus optimization.
\newblock In \emph{ALT}, pages 3--47. PMLR, 2021.

\bibitem[Absil and Hosseini(2019)]{Absil-2019-Collection}
P-A. Absil and S.~Hosseini.
\newblock A collection of nonsmooth {R}iemannian optimization problems.
\newblock In \emph{Nonsmooth Optimization and Its Applications}, pages 1--15.
  Springer, 2019.

\bibitem[Absil et~al.(2009)Absil, Mahony, and
  Sepulchre]{Absil-2009-Optimization}
P-A. Absil, R.~Mahony, and R.~Sepulchre.
\newblock \emph{Optimization Algorithms on Matrix Manifolds}.
\newblock Princeton University Press, 2009.

\bibitem[Adolphs et~al.(2019)Adolphs, Daneshmand, Lucchi, and
  Hofmann]{Adolphs-2019-Local}
L.~Adolphs, H.~Daneshmand, A.~Lucchi, and T.~Hofmann.
\newblock Local saddle point optimization: A curvature exploitation approach.
\newblock In \emph{AISTATS}, pages 486--495. PMLR, 2019.

\bibitem[Alimisis et~al.(2020)Alimisis, Orvieto, B{\'e}cigneul, and
  Lucchi]{Alimisis-2020-Continuous}
F.~Alimisis, A.~Orvieto, G.~B{\'e}cigneul, and A.~Lucchi.
\newblock A continuous-time perspective for modeling acceleration in
  {R}iemannian optimization.
\newblock In \emph{AISTATS}, pages 1297--1307. PMLR, 2020.

\bibitem[Anandkumar and Ge(2016)]{Anandkumar-2016-Efficient}
A.~Anandkumar and R.~Ge.
\newblock Efficient approaches for escaping higher order saddle points in
  non-convex optimization.
\newblock In \emph{COLT}, pages 81--102. PMLR, 2016.

\bibitem[Antonakopoulos et~al.(2020)Antonakopoulos, Belmega, and
  Mertikopoulos]{Antonakopoulos-2020-Online}
K.~Antonakopoulos, E.~V. Belmega, and P.~Mertikopoulos.
\newblock Online and stochastic optimization beyond {L}ipschitz continuity: A
  {R}iemannian approach.
\newblock In \emph{ICLR}, 2020.
\newblock URL \url{https://openreview.net/forum?id=rkxZyaNtwB}.

\bibitem[Azizian et~al.(2020)Azizian, Mitliagkas, Lacoste-Julien, and
  Gidel]{Azizian-2020-Tight}
W.~Azizian, I.~Mitliagkas, S.~Lacoste-Julien, and G.~Gidel.
\newblock A tight and unified analysis of gradient-based methods for a whole
  spectrum of differentiable games.
\newblock In \emph{AISTATS}, pages 2863--2873. PMLR, 2020.

\bibitem[Bacak(2014)]{Bacak-2014-Convex}
M.~Bacak.
\newblock \emph{Convex Analysis and Optimization in Hadamard Spaces},
  volume~22.
\newblock Walter de Gruyter GmbH \& Co KG, 2014.

\bibitem[Balduzzi et~al.(2018)Balduzzi, Racaniere, Martens, Foerster, Tuyls,
  and Graepel]{Balduzzi-2018-Mechanics}
D.~Balduzzi, S.~Racaniere, J.~Martens, J.~Foerster, K.~Tuyls, and T.~Graepel.
\newblock The mechanics of {N}-player differentiable games.
\newblock In \emph{ICML}, pages 354--363. PMLR, 2018.

\bibitem[Bauschke and Combettes(2011)]{Bauschke-2011-Convex}
H.~H. Bauschke and P.~L. Combettes.
\newblock \emph{Convex Analysis and Monotone Operator Theory in Hilbert
  Spaces}, volume 408.
\newblock Springer, 2011.

\bibitem[Becigneul and Ganea(2019)]{Becigneul-2019-Riemannian}
G.~Becigneul and O-E. Ganea.
\newblock Riemannian adaptive optimization methods.
\newblock In \emph{ICLR}, 2019.
\newblock URL \url{https://openreview.net/forum?id=r1eiqi09K7}.

\bibitem[Ben-Tal et~al.(2009)Ben-Tal, EL~Ghaoui, and
  Nemirovski]{Ben-2009-Robust}
A.~Ben-Tal, L.~EL~Ghaoui, and A.~Nemirovski.
\newblock \emph{Robust Optimization}, volume~28.
\newblock Princeton University Press, 2009.

\bibitem[Bento et~al.(2017)Bento, Ferreira, and Melo]{Bento-2017-Iteration}
G.~C. Bento, O.~P. Ferreira, and J.~G. Melo.
\newblock Iteration-complexity of gradient, subgradient and proximal point
  methods on {R}iemannian manifolds.
\newblock \emph{Journal of Optimization Theory and Applications}, 173\penalty0
  (2):\penalty0 548--562, 2017.

\bibitem[Bergmann and Herzog(2019)]{Bergmann-2019-Intrinsic}
R.~Bergmann and R.~Herzog.
\newblock Intrinsic formulation of {KKT} conditions and constraint
  qualifications on smooth manifolds.
\newblock \emph{SIAM Journal on Optimization}, 29\penalty0 (4):\penalty0
  2423--2444, 2019.

\bibitem[Bomze et~al.(2019)Bomze, Mertikopoulos, Schachinger, and
  Staudigl]{Bomze-2019-Hessian}
I.~M. Bomze, P.~Mertikopoulos, W.~Schachinger, and M.~Staudigl.
\newblock Hessian barrier algorithms for linearly constrained optimization
  problems.
\newblock \emph{SIAM Journal on Optimization}, 29\penalty0 (3):\penalty0
  2100--2127, 2019.

\bibitem[Bonnabel(2013)]{Bonnabel-2013-Stochastic}
S.~Bonnabel.
\newblock Stochastic gradient descent on {R}iemannian manifolds.
\newblock \emph{IEEE Transactions on Automatic Control}, 58\penalty0
  (9):\penalty0 2217--2229, 2013.

\bibitem[Boumal and Absil(2011)]{Boumal-2011-RTRMC}
N.~Boumal and P-A. Absil.
\newblock {RTRMC}: A {R}iemannian trust-region method for low-rank matrix
  completion.
\newblock In \emph{NIPS}, pages 406--414, 2011.

\bibitem[Boumal et~al.(2014)Boumal, Mishra, Absil, and
  Sepulchre]{Boumal-2014-Manopt}
N.~Boumal, B.~Mishra, P-A. Absil, and R.~Sepulchre.
\newblock Manopt, a {M}atlab toolbox for optimization on manifolds.
\newblock \emph{Journal of Machine Learning Research}, 15\penalty0
  (1):\penalty0 1455--1459, 2014.

\bibitem[Boumal et~al.(2019)Boumal, Absil, and Cartis]{Boumal-2019-Global}
N.~Boumal, P-A. Absil, and C.~Cartis.
\newblock Global rates of convergence for nonconvex optimization on manifolds.
\newblock \emph{IMA Journal of Numerical Analysis}, 39\penalty0 (1):\penalty0
  1--33, 2019.

\bibitem[Brouwer(1911)]{Brouwer-1911-Abbildung}
L.~E.~J. Brouwer.
\newblock {\"U}ber abbildung von mannigfaltigkeiten.
\newblock \emph{Mathematische Annalen}, 71\penalty0 (1):\penalty0 97--115,
  1911.

\bibitem[Burago et~al.(2001)Burago, Burago, Burago, Ivanov, Ivanov, and
  Ivanov]{Burago-2001-Course}
D.~Burago, I.~D. Burago, Y.~Burago, S.~Ivanov, S.~V. Ivanov, and S.~A. Ivanov.
\newblock \emph{A Course in Metric Geometry}, volume~33.
\newblock American Mathematical Soc., 2001.

\bibitem[Burago et~al.(1992)Burago, Gromov, and Perel'man]{Burago-1992-AD}
Y.~Burago, M.~Gromov, and G.~Perel'man.
\newblock {A. D. A}lexandrov spaces with curvature bounded below.
\newblock \emph{Russian Mathematical Surveys}, 47\penalty0 (2):\penalty0 1,
  1992.

\bibitem[Cai et~al.(2022)Cai, Oikonomou, and Zheng]{Cai-2022-Tight}
Y.~Cai, A.~Oikonomou, and W.~Zheng.
\newblock Tight last-iterate convergence of the extragradient method for
  constrained monotone variational inequalities.
\newblock \emph{ArXiv Preprint: 2204.09228}, 2022.

\bibitem[Candes et~al.(2008)Candes, Wakin, and Boyd]{Candes-2008-Enhancing}
E.~J. Candes, M.~B. Wakin, and S.~P. Boyd.
\newblock Enhancing sparsity by reweighted $\ell_1$ minimization.
\newblock \emph{Journal of Fourier Analysis and Applications}, 14\penalty0
  (5):\penalty0 877--905, 2008.

\bibitem[Cand{\`e}s et~al.(2011)Cand{\`e}s, Li, Ma, and
  Wright]{Candes-2011-Robust}
E.~J. Cand{\`e}s, X.~Li, Y.~Ma, and J.~Wright.
\newblock Robust principal component analysis?
\newblock \emph{Journal of the ACM (JACM)}, 58\penalty0 (3):\penalty0 1--37,
  2011.

\bibitem[Cheeger and Ebin(1975)]{Cheeger-1975-Comparison}
J.~Cheeger and D.~G. Ebin.
\newblock \emph{Comparison Theorems in Riemannian Geometry}, volume~9.
\newblock North-Holland Amsterdam, 1975.

\bibitem[Chen et~al.(2018)Chen, Klushyn, Kurle, Jiang, Bayer, and
  Smagt]{Chen-2018-Metrics}
N.~Chen, A.~Klushyn, R.~Kurle, X.~Jiang, J.~Bayer, and P.~Smagt.
\newblock Metrics for deep generative models.
\newblock In \emph{AISTATS}, pages 1540--1550. PMLR, 2018.

\bibitem[Chen et~al.(2020)Chen, Ma, So, and Zhang]{Chen-2020-Proximal}
S.~Chen, S.~Ma, A.~M-C. So, and T.~Zhang.
\newblock Proximal gradient method for nonsmooth optimization over the
  {S}tiefel manifold.
\newblock \emph{SIAM Journal on Optimization}, 30\penalty0 (1):\penalty0
  210--239, 2020.

\bibitem[Chung(1954)]{Chung-1954-Stochastic}
K.~L. Chung.
\newblock On a stochastic approximation method.
\newblock \emph{The Annals of Mathematical Statistics}, pages 463--483, 1954.

\bibitem[Criscitiello and Boumal(2019)]{Criscitiello-2019-Efficiently}
C.~Criscitiello and N.~Boumal.
\newblock Efficiently escaping saddle points on manifolds.
\newblock In \emph{NeurIPS}, pages 5987--5997, 2019.

\bibitem[Criscitiello and Boumal(2022)]{Criscitiello-2022-Accelerated}
C.~Criscitiello and N.~Boumal.
\newblock An accelerated first-order method for non-convex optimization on
  manifolds.
\newblock \emph{Foundations of Computational Mathematics}, pages 1--77, 2022.

\bibitem[Daskalakis and Panageas(2018)]{Daskalakis-2018-Limit}
C.~Daskalakis and I.~Panageas.
\newblock The limit points of (optimistic) gradient descent in min-max
  optimization.
\newblock In \emph{NIPS}, pages 9256--9266, 2018.

\bibitem[Daskalakis and Panageas(2019)]{Daskalakis-2019-Last}
C.~Daskalakis and I.~Panageas.
\newblock Last-iterate convergence: Zero-sum games and constrained min-max
  optimization.
\newblock In \emph{ITCS}, 2019.

\bibitem[Diakonikolas(2020)]{Diakonikolas-2020-Halpern}
J.~Diakonikolas.
\newblock Halpern iteration for near-optimal and parameter-free monotone
  inclusion and strong solutions to variational inequalities.
\newblock In \emph{COLT}, pages 1428--1451. PMLR, 2020.

\bibitem[Du et~al.(2017)Du, Jin, Lee, Jordan, P{\'o}czos, and
  Singh]{Du-2017-Gradient}
S.~S. Du, C.~Jin, J.~D. Lee, M.~I. Jordan, B.~P{\'o}czos, and A.~Singh.
\newblock Gradient descent can take exponential time to escape saddle points.
\newblock In \emph{NIPS}, pages 1067--1077, 2017.

\bibitem[Facchinei and Pang(2007)]{Facchinei-2007-Finite}
F.~Facchinei and J-S. Pang.
\newblock \emph{Finite-Dimensional Variational Inequalities and Complementarity
  Problems}.
\newblock Springer Science \& Business Media, 2007.

\bibitem[Fallah et~al.(2020)Fallah, Ozdaglar, and
  Pattathil]{Fallah-2020-Optimal}
A.~Fallah, A.~Ozdaglar, and S.~Pattathil.
\newblock An optimal multistage stochastic gradient method for minimax
  problems.
\newblock In \emph{CDC}, pages 3573--3579. IEEE, 2020.

\bibitem[Fearnley et~al.(2021)Fearnley, Goldberg, Hollender, and
  Savani]{Fearnley-2021-Complexity}
J.~Fearnley, P.~W. Goldberg, A.~Hollender, and R.~Savani.
\newblock The complexity of gradient descent: {CLS = PPAD$\cap$ PLS}.
\newblock In \emph{STOC}, pages 46--59, 2021.

\bibitem[Ferreira and Oliveira(1998)]{Ferreira-1998-Subgradient}
O.~P. Ferreira and P.~R. Oliveira.
\newblock Subgradient algorithm on {R}iemannian manifolds.
\newblock \emph{Journal of Optimization Theory and Applications}, 97\penalty0
  (1):\penalty0 93--104, 1998.

\bibitem[Ferreira and Oliveira(2002)]{Ferreira-2002-Proximal}
O.~P. Ferreira and P.~R. Oliveira.
\newblock Proximal point algorithm on {R}iemannian manifolds.
\newblock \emph{Optimization}, 51\penalty0 (2):\penalty0 257--270, 2002.

\bibitem[Ferreira et~al.(2005)Ferreira, P{\'e}rez, and
  N{\'e}meth]{Ferreira-2005-Singularities}
O.~P. Ferreira, L.~R. P{\'e}rez, and S.~Z. N{\'e}meth.
\newblock Singularities of monotone vector fields and an extragradient-type
  algorithm.
\newblock \emph{Journal of Global Optimization}, 31\penalty0 (1):\penalty0
  133--151, 2005.

\bibitem[Fletcher and Joshi(2007)]{Fletcher-2007-Riemannian}
P.~T. Fletcher and S.~Joshi.
\newblock Riemannian geometry for the statistical analysis of diffusion tensor
  data.
\newblock \emph{Signal Processing}, 87\penalty0 (2):\penalty0 250--262, 2007.

\bibitem[Fornasier et~al.(2011)Fornasier, Rauhut, and Ward]{Fornasier-2011-Low}
M.~Fornasier, H.~Rauhut, and R.~Ward.
\newblock Low-rank matrix recovery via iteratively reweighted least squares
  minimization.
\newblock \emph{SIAM Journal on Optimization}, 21\penalty0 (4):\penalty0
  1614--1640, 2011.

\bibitem[Gao et~al.(2018)Gao, Liu, Chen, and Yuan]{Gao-2018-New}
B.~Gao, X.~Liu, X.~Chen, and Y.~Yuan.
\newblock A new first-order algorithmic framework for optimization problems
  with orthogonality constraints.
\newblock \emph{SIAM Journal on Optimization}, 28\penalty0 (1):\penalty0
  302--332, 2018.

\bibitem[Ge et~al.(2015)Ge, Huang, Jin, and Yuan]{Ge-2015-Escaping}
R.~Ge, F.~Huang, C.~Jin, and Y.~Yuan.
\newblock Escaping from saddle points—online stochastic gradient for tensor
  decomposition.
\newblock In \emph{COLT}, pages 797--842. PMLR, 2015.

\bibitem[Ge et~al.(2017)Ge, Jin, and Zheng]{Ge-2017-No}
R.~Ge, C.~Jin, and Y.~Zheng.
\newblock No spurious local minima in nonconvex low rank problems: A unified
  geometric analysis.
\newblock In \emph{ICML}, pages 1233--1242. PMLR, 2017.

\bibitem[Giannou et~al.(2021{\natexlab{a}})Giannou, Vlatakis-Gkaragkounis, and
  Mertikopoulos]{Giannou-2021-Rate}
A.~Giannou, E.~V. Vlatakis-Gkaragkounis, and P.~Mertikopoulos.
\newblock On the rate of convergence of regularized learning in games: From
  bandits and uncertainty to optimism and beyond.
\newblock In \emph{NeurIPS}, pages 22655--22666, 2021{\natexlab{a}}.

\bibitem[Giannou et~al.(2021{\natexlab{b}})Giannou, Vlatakis-Gkaragkounis, and
  Mertikopoulos]{Giannou-2021-Survival}
A.~Giannou, E.~V. Vlatakis-Gkaragkounis, and P.~Mertikopoulos.
\newblock Survival of the strictest: Stable and unstable equilibria under
  regularized learning with partial information.
\newblock In \emph{COLT}, pages 2147--2148. PMLR, 2021{\natexlab{b}}.

\bibitem[Gidel et~al.(2019)Gidel, Hemmat, Pezeshki, Le~Priol, Huang,
  Lacoste-Julien, and Mitliagkas]{Gidel-2019-Negative}
G.~Gidel, R.~A. Hemmat, M.~Pezeshki, R.~Le~Priol, G.~Huang, S.~Lacoste-Julien,
  and I.~Mitliagkas.
\newblock Negative momentum for improved game dynamics.
\newblock In \emph{AISTATS}, pages 1802--1811. PMLR, 2019.

\bibitem[Golowich et~al.(2020)Golowich, Pattathil, Daskalakis, and
  Ozdaglar]{Golowich-2020-Last}
N.~Golowich, S.~Pattathil, C.~Daskalakis, and A.~Ozdaglar.
\newblock Last iterate is slower than averaged iterate in smooth convex-concave
  saddle point problems.
\newblock In \emph{COLT}, pages 1758--1784. PMLR, 2020.

\bibitem[Goodfellow et~al.(2014)Goodfellow, Pouget-Abadie, Mirza, Xu,
  Warde-Farley, Ozair, Courville, and Bengio]{Goodfellow-2014-Generative}
I.~Goodfellow, J.~Pouget-Abadie, M.~Mirza, B.~Xu, D.~Warde-Farley, S.~Ozair,
  A.~Courville, and Y.~Bengio.
\newblock Generative adversarial networks.
\newblock In \emph{NIPS}, pages 2672--2680, 2014.

\bibitem[Hamedani and Aybat(2021)]{Hamedani-2021-Primal}
E.~Y. Hamedani and N.~S. Aybat.
\newblock A primal-dual algorithm with line search for general convex-concave
  saddle point problems.
\newblock \emph{SIAM Journal on Optimization}, 31\penalty0 (2):\penalty0
  1299--1329, 2021.

\bibitem[Han et~al.(2021)Han, Mishra, Jawanpuria, and Gao]{Han-2021-Riemannian}
A.~Han, B.~Mishra, P.~K. Jawanpuria, and J.~Gao.
\newblock On {R}iemannian optimization over positive definite matrices with the
  {B}ures-{W}asserstein geometry.
\newblock In \emph{NeurIPS}, pages 8940--8953, 2021.

\bibitem[Han et~al.(2022)Han, Mishra, Jawanpuria, Kumar, and
  Gao]{Han-2022-Riemannian}
A.~Han, B.~Mishra, P.~Jawanpuria, P.~Kumar, and J.~Gao.
\newblock Riemannian {H}amiltonian methods for min-max optimization on
  manifolds.
\newblock \emph{ArXiv Preprint: 2204.11418}, 2022.

\bibitem[Harandi et~al.(2017)Harandi, Salzmann, and
  Hartley]{Harandi-2017-Dimensionality}
M.~Harandi, M.~Salzmann, and R.~Hartley.
\newblock Dimensionality reduction on {SPD} manifolds: The emergence of
  geometry-aware methods.
\newblock \emph{IEEE Transactions on Pattern Analysis and Machine
  Intelligence}, 40\penalty0 (1):\penalty0 48--62, 2017.

\bibitem[Helly(1923)]{Helly-1923-Mengen}
E.~D. Helly.
\newblock {\"U}ber mengen konvexer k{\"o}rper mit gemeinschaftlichen punkte.
\newblock \emph{Jahresbericht der Deutschen Mathematiker-Vereinigung},
  32:\penalty0 175--176, 1923.

\bibitem[Heusel et~al.(2017)Heusel, Ramsauer, Unterthiner, Nessler, and
  Hochreiter]{Heusel-2017-Gans}
M.~Heusel, H.~Ramsauer, T.~Unterthiner, B.~Nessler, and S.~Hochreiter.
\newblock {GAN}s trained by a two time-scale update rule converge to a local
  nash equilibrium.
\newblock In \emph{NIPS}, pages 6629--6640, 2017.

\bibitem[Hosseini and Sra(2015)]{Hosseini-2015-Matrix}
R.~Hosseini and S.~Sra.
\newblock Matrix manifold optimization for {G}aussian mixtures.
\newblock In \emph{NIPS}, pages 910--918, 2015.

\bibitem[Hu et~al.(2018)Hu, Milzarek, Wen, and Yuan]{Hu-2018-Adaptive}
J.~Hu, A.~Milzarek, Z.~Wen, and Y.~Yuan.
\newblock Adaptive quadratically regularized {N}ewton method for {R}iemannian
  optimization.
\newblock \emph{SIAM Journal on Matrix Analysis and Applications}, 39\penalty0
  (3):\penalty0 1181--1207, 2018.

\bibitem[Hu et~al.(2019)Hu, Jiang, Lin, Wen, and Yuan]{Hu-2019-Structured}
J.~Hu, B.~Jiang, L.~Lin, Z.~Wen, and Y.~Yuan.
\newblock Structured quasi-{N}ewton methods for optimization with orthogonality
  constraints.
\newblock \emph{SIAM Journal on Scientific Computing}, 41\penalty0
  (4):\penalty0 A2239--A2269, 2019.

\bibitem[Hu et~al.(2020)Hu, Liu, Wen, and Yuan]{Hu-2020-Brief}
J.~Hu, X.~Liu, Z-W. Wen, and Y-X. Yuan.
\newblock A brief introduction to manifold optimization.
\newblock \emph{Journal of the Operations Research Society of China},
  8\penalty0 (2):\penalty0 199--248, 2020.

\bibitem[Huang et~al.(2020)Huang, Gao, and Huang]{Huang-2020-Gradient}
F.~Huang, S.~Gao, and H.~Huang.
\newblock Gradient descent ascent for min-max problems on {R}iemannian
  manifolds.
\newblock \emph{ArXiv Preprint: 2010.06097}, 2020.

\bibitem[Huang et~al.(2018)Huang, Liu, Lang, Yu, Wang, and
  Li]{Huang-2018-Orthogonal}
L.~Huang, X.~Liu, B.~Lang, A.~Yu, Y.~Wang, and B.~Li.
\newblock Orthogonal weight normalization: solution to optimization over
  multiple dependent stiefel manifolds in deep neural networks.
\newblock In \emph{AAAI}, pages 3271--3278, 2018.

\bibitem[Ivanov(2014)]{Ivanov-2014-Helly}
S.~Ivanov.
\newblock On {H}elly's theorem in geodesic spaces.
\newblock \emph{Electronic Research Announcements}, 21:\penalty0 109, 2014.

\bibitem[Jawanpuria and Mishra(2018)]{Jawanpuria-2018-Unified}
P.~Jawanpuria and B.~Mishra.
\newblock A unified framework for structured low-rank matrix learning.
\newblock In \emph{ICML}, pages 2254--2263. PMLR, 2018.

\bibitem[Jin et~al.(2017)Jin, Ge, Netrapalli, Kakade, and
  Jordan]{Jin-2017-Escape}
C.~Jin, R.~Ge, P.~Netrapalli, S.~M. Kakade, and M.~I. Jordan.
\newblock How to escape saddle points efficiently.
\newblock In \emph{ICML}, pages 1724--1732. PMLR, 2017.

\bibitem[Jin et~al.(2020)Jin, Netrapalli, and Jordan]{Jin-2020-Local}
C.~Jin, P.~Netrapalli, and M.~I. Jordan.
\newblock What is local optimality in nonconvex-nonconcave minimax
  optimization?
\newblock In \emph{ICML}, pages 4880--4889. PMLR, 2020.

\bibitem[Jin et~al.(2021)Jin, Netrapalli, Ge, Kakade, and
  Jordan]{Jin-2021-Nonconvex}
C.~Jin, P.~Netrapalli, R.~Ge, S.~M. Kakade, and M.~I. Jordan.
\newblock On nonconvex optimization for machine learning: Gradients,
  stochasticity, and saddle points.
\newblock \emph{Journal of the ACM (JACM)}, 68\penalty0 (2):\penalty0 1--29,
  2021.

\bibitem[Juditsky et~al.(2011)Juditsky, Nemirovski, and
  Tauvel]{Juditsky-2011-Solving}
A.~Juditsky, A.~Nemirovski, and C.~Tauvel.
\newblock Solving variational inequalities with stochastic mirror-prox
  algorithm.
\newblock \emph{Stochastic Systems}, 1\penalty0 (1):\penalty0 17--58, 2011.

\bibitem[Kakutani(1941)]{Kakutani-1941-Generalization}
S.~Kakutani.
\newblock A generalization of {B}rouwer’s fixed point theorem.
\newblock \emph{Duke Mathematical Journal}, 8\penalty0 (3):\penalty0 457--459,
  1941.

\bibitem[Kasai and Mishra(2018)]{Kasai-2018-Inexact}
H.~Kasai and B.~Mishra.
\newblock Inexact trust-region algorithms on {R}iemannian manifolds.
\newblock In \emph{NeurIPS}, pages 4249--4260, 2018.

\bibitem[Kasai et~al.(2019)Kasai, Jawanpuria, and
  Mishra]{Kasai-2019-Riemannian}
H.~Kasai, P.~Jawanpuria, and B.~Mishra.
\newblock Riemannian adaptive stochastic gradient algorithms on matrix
  manifolds.
\newblock In \emph{ICML}, pages 3262--3271, 2019.

\bibitem[Kingma and Ba(2015)]{Kingma-2014-Adam}
D.~P. Kingma and J.~Ba.
\newblock Adam: A method for stochastic optimization.
\newblock In \emph{ICLR}, 2015.
\newblock URL \url{https://openreview.net/forum?id=8gmWwjFyLj}.

\bibitem[Knaster et~al.(1929)Knaster, Kuratowski, and
  Mazurkiewicz]{Knaster-1929-Beweis}
B.~Knaster, C.~Kuratowski, and S.~Mazurkiewicz.
\newblock Ein beweis des fixpunktsatzes f{\"u}r n-dimensionale simplexe.
\newblock \emph{Fundamenta Mathematicae}, 14\penalty0 (1):\penalty0 132--137,
  1929.

\bibitem[Komiya(1988)]{Komiya-1988-Elementary}
H.~Komiya.
\newblock Elementary proof for {S}ion's minimax theorem.
\newblock \emph{Kodai Mathematical Journal}, 11\penalty0 (1):\penalty0 5--7,
  1988.

\bibitem[Kong and Monteiro(2021)]{Kong-2021-Accelerated}
W.~Kong and R.~D.~C. Monteiro.
\newblock An accelerated inexact proximal point method for solving
  nonconvex-concave min-max problems.
\newblock \emph{SIAM Journal on Optimization}, 31\penalty0 (4):\penalty0
  2558--2585, 2021.

\bibitem[Korpelevich(1976)]{Korpelevich-1976-Extragradient}
G.~M. Korpelevich.
\newblock The extragradient method for finding saddle points and other
  problems.
\newblock \emph{Matecon}, 12:\penalty0 747--756, 1976.

\bibitem[Kotsalis et~al.(2022)Kotsalis, Lan, and Li]{Kotsalis-2022-Simple}
G.~Kotsalis, G.~Lan, and T.~Li.
\newblock Simple and optimal methods for stochastic variational inequalities,
  {I}: operator extrapolation.
\newblock \emph{SIAM Journal on Optimization}, 32\penalty0 (3):\penalty0
  2041--2073, 2022.

\bibitem[Krist{\'a}ly(2014)]{Kristaly-2014-Nash}
A.~Krist{\'a}ly.
\newblock Nash-type equilibria on {R}iemannian manifolds: A variational
  approach.
\newblock \emph{Journal de Math{\'e}matiques Pures et Appliqu{\'e}es},
  101\penalty0 (5):\penalty0 660--688, 2014.

\bibitem[Kumar et~al.(2017)Kumar, Sattigeri, and Fletcher]{Kumar-2017-Semi}
A.~Kumar, P.~Sattigeri, and P.~T. Fletcher.
\newblock Semi-supervised learning with {GAN}s: Manifold invariance with
  improved inference.
\newblock In \emph{NIPS}, pages 5540--5550, 2017.

\bibitem[Lee(2012)]{Lee-2012-Introduction}
J.~Lee.
\newblock \emph{Introduction to Smooth Manifolds}, volume 218.
\newblock Springer Science \& Business Media, 2012.

\bibitem[Lee et~al.(2022)Lee, Kim, Olfat, Hasegawa-Johnson, and
  Yoo]{Lee-2022-Fast}
J.~Lee, G.~Kim, M.~Olfat, M.~Hasegawa-Johnson, and C.~D. Yoo.
\newblock Fast and efficient {MMD}-based fair {PCA} via optimization over
  {S}tiefel manifold.
\newblock In \emph{AAAI}, pages 7363--7371, 2022.

\bibitem[Li et~al.(2009)Li, L{\'o}pez, and
  Mart{\'\i}n-M{\'a}rquez]{Li-2009-Monotone}
C.~Li, G.~L{\'o}pez, and V.~Mart{\'\i}n-M{\'a}rquez.
\newblock Monotone vector fields and the proximal point algorithm on {H}adamard
  manifolds.
\newblock \emph{Journal of the London Mathematical Society}, 79\penalty0
  (3):\penalty0 663--683, 2009.

\bibitem[Li et~al.(2021)Li, Chen, Deng, Qu, Zhu, and
  Man-Cho~So]{Li-2021-Weakly}
X.~Li, S.~Chen, Z.~Deng, Q.~Qu, Z.~Zhu, and A.~Man-Cho~So.
\newblock Weakly convex optimization over {S}tiefel manifold using {R}iemannian
  subgradient-type methods.
\newblock \emph{SIAM Journal on Optimization}, 31\penalty0 (3):\penalty0
  1605--1634, 2021.

\bibitem[Liang and Stokes(2019)]{Liang-2019-Interaction}
T.~Liang and J.~Stokes.
\newblock Interaction matters: A note on non-asymptotic local convergence of
  generative adversarial networks.
\newblock In \emph{AISTATS}, pages 907--915. PMLR, 2019.

\bibitem[Lin et~al.(2020{\natexlab{a}})Lin, Fan, Ho, Cuturi, and
  Jordan]{Lin-2020-Projection}
T.~Lin, C.~Fan, N.~Ho, M.~Cuturi, and M.~I. Jordan.
\newblock Projection robust {W}asserstein distance and {R}iemannian
  optimization.
\newblock In \emph{NeurIPS}, pages 9383--9397, 2020{\natexlab{a}}.

\bibitem[Lin et~al.(2020{\natexlab{b}})Lin, Jin, and Jordan]{Lin-2020-Gradient}
T.~Lin, C.~Jin, and M.~I. Jordan.
\newblock On gradient descent ascent for nonconvex-concave minimax problems.
\newblock In \emph{ICML}, pages 6083--6093. PMLR, 2020{\natexlab{b}}.

\bibitem[Lin et~al.(2020{\natexlab{c}})Lin, Jin, and Jordan]{Lin-2020-Near}
T.~Lin, C.~Jin, and M.~I. Jordan.
\newblock Near-optimal algorithms for minimax optimization.
\newblock In \emph{COLT}, pages 2738--2779. PMLR, 2020{\natexlab{c}}.

\bibitem[Lin et~al.(2021)Lin, Zheng, Chen, Cuturi, and
  Jordan]{Lin-2021-Projection}
T.~Lin, Z.~Zheng, E.~Chen, M.~Cuturi, and M.~I. Jordan.
\newblock On projection robust optimal transport: Sample complexity and model
  misspecification.
\newblock In \emph{AISTATS}, pages 262--270. PMLR, 2021.

\bibitem[Liu et~al.(2019)Liu, So, and Wu]{Liu-2019-Quadratic}
H.~Liu, A.~M-C. So, and W.~Wu.
\newblock Quadratic optimization with orthogonality constraint: explicit
  {\l}ojasiewicz exponent and linear convergence of retraction-based
  line-search and stochastic variance-reduced gradient methods.
\newblock \emph{Mathematical Programming}, 178\penalty0 (1-2):\penalty0
  215--262, 2019.

\bibitem[Liu et~al.(2020)Liu, Mroueh, Ross, Zhang, Cui, Das, and
  Yang]{Liu-2020-Towards}
M.~Liu, Y.~Mroueh, J.~Ross, W.~Zhang, X.~Cui, P.~Das, and T.~Yang.
\newblock Towards better understanding of adaptive gradient algorithms in
  generative adversarial nets.
\newblock In \emph{ICLR}, 2020.
\newblock URL \url{https://openreview.net/forum?id=SJxIm0VtwH}.

\bibitem[Liu et~al.(2021)Liu, Rafique, Lin, and Yang]{Liu-2021-First}
M.~Liu, H.~Rafique, Q.~Lin, and T.~Yang.
\newblock First-order convergence theory for weakly-convex-weakly-concave
  min-max problems.
\newblock \emph{Journal of Machine Learning Research}, 22\penalty0
  (169):\penalty0 1--34, 2021.

\bibitem[Lojasiewicz(1963)]{Lojasiewicz-1963-Propriete}
S.~Lojasiewicz.
\newblock Une propri{\'e}t{\'e} topologique des sous-ensembles analytiques
  r{\'e}els.
\newblock \emph{Les {\'e}quations aux d{\'e}riv{\'e}es partielles},
  117:\penalty0 87--89, 1963.

\bibitem[Lu et~al.(2020)Lu, Tsaknakis, Hong, and Chen]{Lu-2020-Hybrid}
S.~Lu, I.~Tsaknakis, M.~Hong, and Y.~Chen.
\newblock Hybrid block successive approximation for one-sided non-convex
  min-max problems: Algorithms and applications.
\newblock \emph{IEEE Transactions on Signal Processing}, 68:\penalty0
  3676--3691, 2020.

\bibitem[Martinet(1970)]{Martinet-1970-Regularisation}
B.~Martinet.
\newblock R{\'e}gularisation d’in{\'e}quations variationnelles par
  approximations successives. rev. fran{\c{c}}aise informat.
\newblock \emph{Recherche Op{\'e}rationnelle}, 4:\penalty0 154--158, 1970.

\bibitem[Mazumdar et~al.(2020)Mazumdar, Ratliff, and
  Sastry]{Mazumdar-2020-Gradient}
E.~Mazumdar, L.~J. Ratliff, and S.~S. Sastry.
\newblock On gradient-based learning in continuous games.
\newblock \emph{SIAM Journal on Mathematics of Data Science}, 2\penalty0
  (1):\penalty0 103--131, 2020.

\bibitem[Mei et~al.(2021)Mei, Gao, Dai, Szepesvari, and
  Schuurmans]{Mei-2021-Leveraging}
J.~Mei, Y.~Gao, B.~Dai, C.~Szepesvari, and D.~Schuurmans.
\newblock Leveraging non-uniformity in first-order non-convex optimization.
\newblock In \emph{ICML}, pages 7555--7564. PMLR, 2021.

\bibitem[Mertikopoulos and Sandholm(2018)]{Mertikopoulos-2018-Riemannian}
P.~Mertikopoulos and W.~H. Sandholm.
\newblock Riemannian game dynamics.
\newblock \emph{Journal of Economic Theory}, 177:\penalty0 315--364, 2018.

\bibitem[Mertikopoulos et~al.(2019)Mertikopoulos, Lecouat, Zenati, Foo,
  Chandrasekhar, and Piliouras]{Mertikopoulos-2019-Optimistic}
P.~Mertikopoulos, B.~Lecouat, H.~Zenati, C-S. Foo, V.~Chandrasekhar, and
  G.~Piliouras.
\newblock Optimistic mirror descent in saddle-point problems: Going the
  extra(-gradient) mile.
\newblock In \emph{ICLR}, 2019.
\newblock URL \url{https://openreview.net/forum?id=Bkg8jjC9KQ}.

\bibitem[Mokhtari et~al.(2020)Mokhtari, Ozdaglar, and
  Pattathil]{Mokhtari-2020-Unified}
A.~Mokhtari, A.~Ozdaglar, and S.~Pattathil.
\newblock A unified analysis of extra-gradient and optimistic gradient methods
  for saddle point problems: Proximal point approach.
\newblock In \emph{AISTATS}, pages 1497--1507. PMLR, 2020.

\bibitem[Nemirovski(2004)]{Nemirovski-2004-Prox}
A.~Nemirovski.
\newblock Prox-method with rate of convergence o(1/t) for variational
  inequalities with {L}ipschitz continuous monotone operators and smooth
  convex-concave saddle point problems.
\newblock \emph{SIAM Journal on Optimization}, 15\penalty0 (1):\penalty0
  229--251, 2004.

\bibitem[Nemirovski et~al.(2009)Nemirovski, Juditsky, Lan, and
  Shapiro]{Nemirovski-2009-Robust}
A.~Nemirovski, A.~Juditsky, G.~Lan, and A.~Shapiro.
\newblock Robust stochastic approximation approach to stochastic programming.
\newblock \emph{SIAM Journal on Optimization}, 19\penalty0 (4):\penalty0
  1574--1609, 2009.

\bibitem[Neumann(1928)]{Neumann-1928-Theorie}
J.~V. Neumann.
\newblock Zur theorie der gesellschaftsspiele.
\newblock \emph{Mathematische Annalen}, 100\penalty0 (1):\penalty0 295--320,
  1928.

\bibitem[Nouiehed et~al.(2019)Nouiehed, Sanjabi, Huang, Lee, and
  Razaviyayn]{Nouiehed-2019-Solving}
M.~Nouiehed, M.~Sanjabi, T.~Huang, J.~D. Lee, and M.~Razaviyayn.
\newblock Solving a class of non-convex min-max games using iterative first
  order methods.
\newblock In \emph{NeurIPS}, pages 14934--14942, 2019.

\bibitem[Ostrovskii et~al.(2021)Ostrovskii, Lowy, and
  Razaviyayn]{Ostrovskii-2021-Efficient}
D.~M. Ostrovskii, A.~Lowy, and M.~Razaviyayn.
\newblock Efficient search of first-order {N}ash equilibria in
  nonconvex-concave smooth min-max problems.
\newblock \emph{SIAM Journal on Optimization}, 31\penalty0 (4):\penalty0
  2508--2538, 2021.

\bibitem[Park(2019)]{Park-2019-Riemannian}
S.~Park.
\newblock Riemannian manifolds are {KKM} spaces.
\newblock \emph{Advances in the Theory of Nonlinear Analysis and its
  Application}, 3\penalty0 (2):\penalty0 64--73, 2019.

\bibitem[Pennec et~al.(2006)Pennec, Fillard, and
  Ayache]{Pennec-2006-Riemannian}
X.~Pennec, P.~Fillard, and N.~Ayache.
\newblock A {R}iemannian framework for tensor computing.
\newblock \emph{International Journal of Computer Vision}, 66\penalty0
  (1):\penalty0 41--66, 2006.

\bibitem[Petersen(2006)]{Petersen-2006-Riemannian}
P.~Petersen.
\newblock \emph{Riemannian Geometry}, volume 171.
\newblock Springer, 2006.

\bibitem[Polyak(1963)]{Polyak-1963-Gradient}
B.~T. Polyak.
\newblock Gradient methods for minimizing functionals.
\newblock \emph{Zhurnal Vychislitel'noi Matematiki i Matematicheskoi Fiziki},
  3\penalty0 (4):\penalty0 643--653, 1963.

\bibitem[Polyak and Juditsky(1992)]{Polyak-1992-Acceleration}
B.~T. Polyak and A.~B. Juditsky.
\newblock Acceleration of stochastic approximation by averaging.
\newblock \emph{SIAM Journal on Control and Optimization}, 30\penalty0
  (4):\penalty0 838--855, 1992.

\bibitem[Raskutti and Mukherjee(2015)]{Raskutti-2015-Information}
G.~Raskutti and S.~Mukherjee.
\newblock The information geometry of mirror descent.
\newblock \emph{IEEE Transactions on Information Theory}, 61\penalty0
  (3):\penalty0 1451--1457, 2015.

\bibitem[Reddi et~al.(2018)Reddi, Zaheer, Sra, Poczos, Bach, Salakhutdinov, and
  Smola]{Reddi-2018-Generic}
S.~Reddi, M.~Zaheer, S.~Sra, B.~Poczos, F.~Bach, R.~Salakhutdinov, and
  A.~Smola.
\newblock A generic approach for escaping saddle points.
\newblock In \emph{AISTATS}, pages 1233--1242. PMLR, 2018.

\bibitem[Rockafellar(1976)]{Rockafellar-1976-Monotone}
R.~T. Rockafellar.
\newblock Monotone operators and the proximal point algorithm.
\newblock \emph{SIAM Journal on Control and Optimization}, 14\penalty0
  (5):\penalty0 877--898, 1976.

\bibitem[Ruppert(1988)]{Ruppert-1988-Efficient}
D.~Ruppert.
\newblock Efficient estimations from a slowly convergent {R}obbins-{M}onro
  process.
\newblock Technical report, Cornell University Operations Research and
  Industrial Engineering, 1988.

\bibitem[Sion(1958)]{Sion-1958-General}
M.~Sion.
\newblock On general minimax theorems.
\newblock \emph{Pacific Journal of Mathematics}, 8\penalty0 (1):\penalty0
  171--176, 1958.

\bibitem[Sra and Hosseini(2015)]{Sra-2015-Conic}
S.~Sra and R.~Hosseini.
\newblock Conic geometric optimization on the manifold of positive definite
  matrices.
\newblock \emph{SIAM Journal on Optimization}, 25\penalty0 (1):\penalty0
  713--739, 2015.

\bibitem[Sra and Hosseini(2016)]{Sra-2016-Geometric}
S.~Sra and R.~Hosseini.
\newblock Geometric optimization in machine learning.
\newblock In \emph{Algorithmic Advances in Riemannian Geometry and
  Applications}, pages 73--91. Springer, 2016.

\bibitem[Sun et~al.(2016{\natexlab{a}})Sun, Qu, and
  Wright]{Sun-2016-Complete-I}
J.~Sun, Q.~Qu, and J.~Wright.
\newblock Complete dictionary recovery over the sphere {I}: Overview and the
  geometric picture.
\newblock \emph{IEEE Transactions on Information Theory}, 63\penalty0
  (2):\penalty0 853--884, 2016{\natexlab{a}}.

\bibitem[Sun et~al.(2016{\natexlab{b}})Sun, Qu, and
  Wright]{Sun-2016-Complete-II}
J.~Sun, Q.~Qu, and J.~Wright.
\newblock Complete dictionary recovery over the sphere {II}: Recovery by
  {R}iemannian trust-region method.
\newblock \emph{IEEE Transactions on Information Theory}, 63\penalty0
  (2):\penalty0 885--914, 2016{\natexlab{b}}.

\bibitem[Sun et~al.(2019)Sun, Flammarion, and Fazel]{Sun-2019-Escaping}
Y.~Sun, N.~Flammarion, and M.~Fazel.
\newblock Escaping from saddle points on {R}iemannian manifolds.
\newblock In \emph{NeurIPS}, pages 7274--7284, 2019.

\bibitem[Thekumprampil et~al.(2019)Thekumprampil, Jain, Netrapalli, and
  Oh]{Thekumprampil-2019-Efficient}
K.~K. Thekumprampil, P.~Jain, P.~Netrapalli, and S.~Oh.
\newblock Efficient algorithms for smooth minimax optimization.
\newblock In \emph{NeurIPS}, pages 12680--12691, 2019.

\bibitem[Tripuraneni et~al.(2018)Tripuraneni, Flammarion, Bach, and
  Jordan]{Tripuraneni-2018-Averaging}
N.~Tripuraneni, N.~Flammarion, F.~Bach, and M.~I. Jordan.
\newblock Averaging stochastic gradient descent on {R}iemannian manifolds.
\newblock In \emph{COLT}, pages 650--687, 2018.

\bibitem[Tron et~al.(2012)Tron, Afsari, and Vidal]{Tron-2012-Riemannian}
R.~Tron, B.~Afsari, and R.~Vidal.
\newblock Riemannian consensus for manifolds with bounded curvature.
\newblock \emph{IEEE Transactions on Automatic Control}, 58\penalty0
  (4):\penalty0 921--934, 2012.

\bibitem[Vandereycken(2013)]{Vandereycken-2013-Low}
B.~Vandereycken.
\newblock Low-rank matrix completion by {R}iemannian optimization.
\newblock \emph{SIAM Journal on Optimization}, 23\penalty0 (2):\penalty0
  1214--1236, 2013.

\bibitem[Vlatakis-Gkaragkounis et~al.(2019)Vlatakis-Gkaragkounis, Flokas, and
  Piliouras]{Vlatakis-2019-Poincare}
E.~V. Vlatakis-Gkaragkounis, L.~Flokas, and G.~Piliouras.
\newblock Poincar\'{e} recurrence, cycles and spurious equilibria in
  gradient-descent-ascent for non-convex non-concave zero-sum games.
\newblock In \emph{NeurIPS}, pages 10450--10461, 2019.

\bibitem[Vlatakis-Gkaragkounis et~al.(2020)Vlatakis-Gkaragkounis, Flokas,
  Lianeas, Mertikopoulos, and Piliouras]{Vlatakis-2020-Noregret}
E.~V. Vlatakis-Gkaragkounis, L.~Flokas, T.~Lianeas, P.~Mertikopoulos, and
  G.~Piliouras.
\newblock No-regret learning and mixed {N}ash equilibria: They do not mix.
\newblock In \emph{NeurIPS}, pages 1380--1391, 2020.

\bibitem[Vlatakis-Gkaragkounis et~al.(2021)Vlatakis-Gkaragkounis, Flokas, and
  Piliouras]{Vlatakis-2021-Solving}
E.~V. Vlatakis-Gkaragkounis, L.~Flokas, and G.~Piliouras.
\newblock Solving min-max optimization with hidden structure via gradient
  descent ascent.
\newblock In \emph{NeurIPS}, pages 2373--2386, 2021.

\bibitem[Wang et~al.(2010)Wang, L{\'o}pez, Mart{\'\i}n-M{\'a}rquez, and
  Li]{Wang-2010-Monotone}
J.~H. Wang, G.~L{\'o}pez, V.~Mart{\'\i}n-M{\'a}rquez, and C.~Li.
\newblock Monotone and accretive vector fields on {R}iemannian manifolds.
\newblock \emph{Journal of Optimization Theory and Applications}, 146\penalty0
  (3):\penalty0 691--708, 2010.

\bibitem[Wen and Yin(2013)]{Wen-2013-Feasible}
Z.~Wen and W.~Yin.
\newblock A feasible method for optimization with orthogonality constraints.
\newblock \emph{Mathematical Programming}, 142\penalty0 (1-2):\penalty0
  397--434, 2013.

\bibitem[Wiesel(2012)]{Wiesel-2012-Geodesic}
A.~Wiesel.
\newblock Geodesic convexity and covariance estimation.
\newblock \emph{IEEE Transactions on Signal Processing}, 60\penalty0
  (12):\penalty0 6182--6189, 2012.

\bibitem[Yaz{\i}c{\i} et~al.(2019)Yaz{\i}c{\i}, Foo, Winkler, Yap, Piliouras,
  and Chandrasekhar]{Yazici-2018-Unusual}
Y.~Yaz{\i}c{\i}, C-S. Foo, S.~Winkler, K-H. Yap, G.~Piliouras, and
  V.~Chandrasekhar.
\newblock The unusual effectiveness of averaging in {GAN} training.
\newblock In \emph{ICLR}, 2019.
\newblock URL \url{https://openreview.net/forum?id=SJgw_sRqFQ}.

\bibitem[Zhang and Sra(2016)]{Zhang-2016-First}
H.~Zhang and S.~Sra.
\newblock First-order methods for geodesically convex optimization.
\newblock In \emph{COLT}, pages 1617--1638. PMLR, 2016.

\bibitem[Zhang et~al.(2016)Zhang, Reddi, and Sra]{Zhang-2016-Riemannian}
H.~Zhang, S.~J. Reddi, and S.~Sra.
\newblock Riemannian {SVRG}: Fast stochastic optimization on {R}iemannian
  manifolds.
\newblock In \emph{NeurIPS}, pages 4592--4600, 2016.

\bibitem[Zhang et~al.(2020)Zhang, Ma, and Zhang]{Zhang-2020-Primal}
J.~Zhang, S.~Ma, and S.~Zhang.
\newblock Primal-dual optimization algorithms over {R}iemannian manifolds: An
  iteration complexity analysis.
\newblock \emph{Mathematical Programming}, 184\penalty0 (1):\penalty0 445--490,
  2020.

\bibitem[Zhang et~al.(2021)Zhang, Hong, and Zhang]{Zhang-2021-Lower}
J.~Zhang, M.~Hong, and S.~Zhang.
\newblock On lower iteration complexity bounds for the convex concave saddle
  point problems.
\newblock \emph{Mathematical Programming}, pages 1--35, 2021.

\bibitem[Zhang et~al.(2022)Zhang, Zhang, and Sra]{Zhang-2022-Minimax}
P.~Zhang, J.~Zhang, and S.~Sra.
\newblock Minimax in geodesic metric spaces: {S}ion's theorem and algorithms.
\newblock \emph{ArXiv Preprint: 2202.06950}, 2022.

\end{thebibliography}

\appendix
\section{Related Work}\label{sec:further-related}
The literature for the geometric properties of Riemannian Manifolds is immense and hence we cannot hope to survey them here; for an appetizer, we refer the reader to~\citet{Burago-2001-Course} and~\citet{Lee-2012-Introduction} and references therein. On the other hand, as stated, it is not until recently that the long-run non-asymptotic behavior of optimization algorithms in Riemannian manifolds (even the smooth ones) has encountered a lot of interest. For concision, we have deferred here a detailed exposition of the rest of recent results to Appendix~\ref{sec:further-related} of the paper’s supplement. Additionally, in Appendix~\ref{app:examples} we also give a bunch of motivating examples which can be solved by Riemannian min-max optimization. 

\paragraph{Minimization on Riemannian manifolds.} Many application problems can be formulated as the minimization or maximization of a smooth function over Riemannian manifold and has triggered a line of research on the extension of the classical first-order and second-order methods to Riemannian setting with asymptotic convergence to first-order stationary points in general~\citep{Absil-2009-Optimization}. Recent years have witnessed the renewed interests on nonasymptotic convergence analysis of solution methods. In particular,~\citet{Boumal-2019-Global} proved the global sublinear convergence results for Riemannian gradient descent method and Riemannian trust region method, and further demonstrated that the Riemannian trust region method converges to a second-order stationary point in polynomial time; see also similar results in some other works~\citep{Kasai-2018-Inexact, Hu-2018-Adaptive, Hu-2019-Structured}. We are also aware of recent works on problem-specific methods~\citep{Wen-2013-Feasible, Gao-2018-New, Liu-2019-Quadratic} and primal-dual methods~\citep{Zhang-2020-Primal}.

Compared to the smooth counterpart, Riemannian nonsmooth optimization is harder and relatively less explored~\citep{Absil-2019-Collection}. A few existing works focus on optimizing geodesically convex functions over Riemannian manifold with subgradient methods~\citep{Ferreira-1998-Subgradient, Zhang-2016-First, Bento-2017-Iteration}. In particular,~\citet{Ferreira-1998-Subgradient} provided the first asymptotic convergence result while~\citet{Zhang-2016-First} and~\citep{Bento-2017-Iteration} proved an nonasymptotic global convergence rate of $\bigO(\epsilon^{-2})$ for Riemannian subgradient methods. Further,~\citet{Ferreira-2002-Proximal} assumed that the proximal mapping over Riemannian manifold is computationally tractable and proved the global sublinear convergence of Riemannian proximal point method. Focusing on optimization over Stiefel manifold,~\citet{Chen-2020-Proximal} studied the composite objective function and proposed Riemannian proximal gradient method which only needs to compute the proximal mapping of nonsmooth component function over the tangent space of Stiefel manifold.~\citet{Li-2021-Weakly} consider optimizing a weakly convex function over Stiefel manifold and proposed Riemannian subgradient methods that drive a near-optimal stationarity measure below $\epsilon$ within the number of iterations bounded by $O(\epsilon^{-4})$. 

There are some results on stochastic optimization over Riemannian manifold. In particular,~\citet{Bonnabel-2013-Stochastic} proved the first asymptotic convergence result for Riemannian stochastic gradient descent, which is extended by a line of subsequent works~\citep{Zhang-2016-Riemannian, Tripuraneni-2018-Averaging, Becigneul-2019-Riemannian, Kasai-2019-Riemannian}. If the Riemannian Hessian is not positive definite, some recent works have suggested frameworks to escape saddle points~\citep{Sun-2019-Escaping, Criscitiello-2019-Efficiently}. 

\paragraph{Min-Max optimization in Euclidean spaces.} Focusing on solving specifically min-max problems, the algorithms under euclidean geometry  have a very rich history in optimization that goes back at least to the original proximal point algorithms~\citep{Martinet-1970-Regularisation, Rockafellar-1976-Monotone} for variational inequality (VI) problems;  At a high level, if the objective function is Lipschitz and strictly convex-concave, the simple forward-backward schemes are known to converge – and if combined with a Polyak–Ruppert averaging scheme~\citep{Ruppert-1988-Efficient, Polyak-1992-Acceleration, Nemirovski-2009-Robust}, they achieve an $O(1/\epsilon^2)$ complexity\footnote{For the rest of the presentation, we adopt the convention of presenting the \emph{fine-grained complexity} performance measure for computing an $O(\epsilon)$-close solution instead of the \emph{convergence rate} of a method. Thus a rate of the form $\|\x_t-\x^*\|\le O(1/t^{1/p})$ typically corresponds to $O(1/\epsilon^p)$ gradient computations and the geometric rate $\|\x_t-\x^*\|\le O(\exp(-\mu t))$ matches usually up with the $O(\ln (1/\epsilon))$ computational complexity.} without the caveat of strictness~\citep{Bauschke-2011-Convex}. If, in addition, the objective admits Lipschitz continuous gradients, then the extragradient (EG) algorithm~\citep{Korpelevich-1976-Extragradient} achieves trajectory convergence without strict monotonicity requirements, while the time-average iterate converges at $O(1/\epsilon)$ steps~\citep{Nemirovski-2004-Prox}. Finally, if the problem is strongly convex-concave, forward-backward methods computes an $\epsilon$-saddle point at $O(1/\epsilon)$ steps; and if the operator is also Lipschitz continuous, classical results in operator theory show that simple forward-backward methods suffice to achieve a linear convergence rate~\citep{Facchinei-2007-Finite, Bauschke-2011-Convex}.

\paragraph{Min-Max optimization on Riemannian manifolds.} In the case of nonlinear geometry, the literature has been devoted on two different orthogonal axes: \emph{a)} the existence of saddle point  for min-max objective bi-functions and \emph{b)} the design of algorithms for the computation of such points. For the existence of saddle point, a long line of recent work tried to generalize the seminal minima theorem for quasi-convex-quasi-concave problems of \citet{Sion-1958-General}. The crucial bottleneck of this generalization to Riemannian smooth manifolds had been the application of  both Knaster–Kuratowski–Mazurkiewicz (KKM) theorem and Helly’s theorem in non-flat spaces. Before \citet{Zhang-2022-Minimax}, the existence of saddle points had been identified for the special case of Hadamard manifolds~\citep{Komiya-1988-Elementary, Kristaly-2014-Nash,Bento-2017-Iteration,Park-2019-Riemannian}.

Similar with the existence results, initially the developed methods referred to the computation of singularities in monotone variational operators typically in hyperbolic Hadamard manifolds with negative curvature~\citep{Li-2009-Monotone}. More recently, ~\citet{Huang-2020-Gradient} proposed a Riemannian gradient descent ascent method (RGDA), yet the analysis is restricted to $\NCal$ being a convex subset of the Euclidean space and $f(x, y)$ being strongly concave in $y$. It is worth mentioning that for the case Hadamard and generally hyperbolic manifolds, extra-gradient style algorithms have been proposed~\citep{Wang-2010-Monotone, Ferreira-2005-Singularities} in the literature, establishing mainly their asymptotic convergence. However it was not until recent \citet{Zhang-2022-Minimax} that the riemannian correction trick has been analyzed for the case of the extra-gradient algorithm. Bearing in our mind the higher-order methods,~\citet{Han-2022-Riemannian} has recently proposed the Riemannian Hamiltonian Descent and versions of Newton's method for for geodesic convex geodesic concave functions. Since in this work, we focus only on first-order methods, we don't compare with the aforementioned Hamiltonian alternative since it incorporates always the extra computational burden of second-derivatives and hessian over a manifold.

\section{Motivating Examples}\label{app:examples}
We provide some examples of Riemannian min-max optimization to give a sense of their expressivity. Two of the examples are the generic models from the optimization literature~\citep{Ben-2009-Robust, Absil-2009-Optimization, Hu-2020-Brief} and the two others are the formulations of application problems arising from machine learning and data analytics~\citep{Pennec-2006-Riemannian, Fletcher-2007-Riemannian, Lin-2020-Projection}. 
\begin{example}[Riemannian optimization with nonlinear constraints] 
We can consider a rather straightforward generalization of constrained optimization problem from Euclidean spaces to Riemannian manifolds~\citep{Bergmann-2019-Intrinsic}. This formulation finds a wide range of real-world applications, e.g., non-negative principle component analysis, weighted max-cut and so on. Letting $\MCal$ be a finite-dimensional Riemannian manifold with unique geodesic, we focus on the following problem: 
\begin{equation*}
\min_{x \in \MCal} \ f(x), \quad \st \ g(x) \leq 0, \ h(x) = 0, 
\end{equation*}
where $g := (g_1, g_2, \ldots, g_m): \MCal \mapsto \br^m$ and $h := (h_1, h_2, \ldots, h_n): \MCal \mapsto \br^n$ are two mappings. Then, we can introduce the dual variables $\lambda$ and $\mu$ and reformulate the aforementioned constrained optimization problem as follows, 
\begin{equation*}
\min_{x \in \MCal} \max_{(\lambda, \mu) \in \br_+^m \times \br^n} \ f(x) + \langle \lambda, g(x)\rangle + \langle \mu, h(x)\rangle. 
\end{equation*}
Suppose that $f$ and all of $g_i$ and $h_i$ are geodesically convex and smooth, the above problem is a geodesic-convex-Euclidean-concave min-max optimization problem. 
\end{example}
\begin{example}[Distributionally robust Riemannian optimization]
Distributionally robust optimization (DRO) is an effective method to deal with the noisy data, adversarial data, and imbalanced data. We consider the problem of DRO over Riemannian manifold; indeed, given a
set of data samples $\{\xi_i\}_{i=1}^N$, the problem of DRO over Riemannian manifold $\MCal$ can be written in the form of 
\begin{equation*}
\min_{x \in \MCal} \max_{\textbf{p} \in \SCal} \ \sum_{i=1}^N p_i \ell(x; \xi_i) - \|\textbf{p} - \tfrac{1}{N}\textbf{1}\|^2, 
\end{equation*}
where $\textbf{p} = (p_1, p_2, \ldots, p_N)$ and $\SCal = \{\textbf{p} \in \br^N: \sum_{i=1}^N p_i = 1, p_i \geq 0\}$. In general, $\ell(x; \xi_i)$ denotes the loss function over Riemannian manifold $\MCal$. If $\ell$ is geodesically convex and smooth, the above problem is a geodesic-convex-Euclidean-concave min-max optimization problem. 
\end{example}
\begin{example}[Robust matrix Karcher mean problem] We consider a robust version of classical matrix Karcher mean problem. More specifically, the Karcher mean of $N$ symmetric positive definite matrices $\{A_i\}_{i=1}^N$ is defined as the matrix $X \in \MCal = \{X \in \br^{n \times n}: X \succ 0, \ X = X^\top\}$ that minimizes the sum of squared distance induced by the Riemannian metric: 
\begin{equation*}
d(X, Y) = \|\log(X^{-1/2}YX^{-1/2})\|_F. 
\end{equation*}
The loss function is thus defined by 
\begin{equation*}
f(X; \{A_i\}_{i=1}^N) = \sum_{i=1}^N (d(X, A_i))^2.  
\end{equation*}
which is known to be nonconvex in Euclidean spaces but geodesically strongly convex. Then, the robust version of classical matrix Karcher mean problem is aiming at solving the following problem: 
\begin{equation*}
\min_{X \in \MCal} \max_{Y_i \in \MCal} \ f(X; \{Y_i\}_{i=1}^N) - \gamma\left(\sum_{i=1}^N (d(Y_i, A_i))^2\right), 
\end{equation*}
where $\gamma > 0$ stands for the trade-off between the computation of Karcher mean over a set of $\{Y_i\}_{i=1}^N$ and the difference between the observed samples $\{A_i\}_{i=1}^N$ and $\{Y_i\}_{i=1}^N$. It is clear that the above problem is a geodesically strongly-convex-strongly-concave min-max optimization problem. 
\end{example}
\begin{example}[Projection robust optimal transport problem] We consider the projection robust optimal transport (OT) problem -- a robust variant of the OT problem -- that achieves superior sample complexity bound~\citep{Lin-2021-Projection}. Let $\{x_1, x_2, \ldots, x_n\} \subseteq \br^d$ and $\{y_1, y_2, \ldots, y_n\} \subseteq \br^d$ denote sets of $n$ atoms, and let $(r_1, r_2, \ldots, r_n)$ and $(c_1, c_2, \ldots, c_n)$ denote weight vectors. We define discrete probability
measures $\mu = \sum_{i=1}^n r_i \delta_{x_i}$ and $\nu = \sum_{j=1}^n c_j \delta_{y_j}$. In this setting,  the computation of the $k$-dimensional projection robust OT distance between $\mu$ and $\nu$ resorts to solving the following problem: 
\begin{equation*}
\max\limits_{U \in \St(d, k)} \min\limits_{\pi \in \Pi(\mu, \nu)} \sum_{i=1}^n \sum_{j=1}^n \pi_{i, j}\|U^\top x_i - U^\top y_j\|^2, 
\end{equation*}
where $\St(d, k) = \{U \in \br^{d \times k} \mid U^\top U = I_k\}$ is a Stiefel manifold and $\Pi(r, c) = \{\pi \in \br_+^{n \times n} \mid \sum_{j=1}^n \pi_{ij} = r_i, \sum_{i=1}^n \pi_{ij} = c_j\}$ is a transportation polytope. It is worth mentioning that the above problem is a geodesically-nonconvex-Euclidean-concave min-max optimization problem with special structures, making the computation of stationary points tractable. While the global convergence guarantee for our algorithm does not apply, the above problem might be locally geodesically-convex-Euclidean-concave such that our algorithm with sufficiently good initialization works here. 
\end{example}
In addition to these examples, it is worth mentioning that Riemannian min-max optimization problems contain all general min-max optimization problems in Euclidean spaces and all Riemannian minimization or maximization optimization problems. It is also an abstraction of many machine learning problems, e.g,. principle component analysis~\citep{Boumal-2011-RTRMC}, dictionary learning~\citep{Sun-2016-Complete-I, Sun-2016-Complete-II}, deep neural networks (DNNs)~\citep{Huang-2018-Orthogonal} and low-rank matrix learning~\citep{Vandereycken-2013-Low, Jawanpuria-2018-Unified}; indeed, the problem of principle component analysis resorts to optimization problems on Grassmann manifolds for example. 
 
\section{Metric Geometry}\label{app:MG}
To generalize the first-order methods in Euclidean setting, we introduce several basic concepts in metric geometry~\citep{Burago-2001-Course}, which are known to include both Euclidean spaces and Riemannian manifolds as special cases. Formally, we have
\begin{definition}[Metric Space]
A metric space $(X, d)$ is a pair of a set $X$ and a distance function $d(\cdot, \cdot)$ satisfying: (i) $d(x, x') \geq 0$ for any $x, x' \in X$; (ii) $d(x, x') = d(x', x)$ for any $x, x' \in X$; and (iii) $d(x, x'') \leq d(x, x') + d(x', x'')$ for any $x, x', x'' \in X$. In other words, the distance function $d(\cdot, \cdot)$ is non-negative, symmetrical and satisfies the triangle inequality. 
\end{definition}
A \textit{path} $\gamma: [0, 1] \mapsto X$ is a continuous mapping from the interval $[0, 1]$ to $X$ and the \textit{length} of $\gamma$ is defined as $\textnormal{length}(\gamma) \mydefn \lim_{n \rightarrow +\infty} \sup_{0 = t_0 < \ldots < t_n = 1} \sum_{i=1}^n d(\gamma(t_{i-1}), \gamma(t_i))$. Note that the triangle inequality implies that $\sup_{0 = t_0 < \ldots < t_n = 1} \sum_{i=1}^n d(\gamma(t_{i-1}), \gamma(t_i))$ is nondecreasing. Then, the length of a path $\gamma$ is well defined since the limit is either $+\infty$ or a finite scalar. Moreover, for $\forall \epsilon > 0$, there exists $n \in \bn$ and the partition $0 = t_0 < \ldots < t_n = 1$ of the interval $[0, 1]$ such that $\textnormal{length}(\gamma) \leq \sum_{i=1}^n d(\gamma(t_{i-1}), \gamma(t_i)) + \epsilon$. 
\begin{definition}[Length Space]\label{def:length}
A metric space $(X, d)$ is a length space if, for any $x, x' \in X$ and $\epsilon > 0$, there exists a path $\gamma: [0, 1] \mapsto X$ connecting $x$ and $x'$ such that $\textnormal{length}(\gamma) \leq d(x, x') + \epsilon$. 
\end{definition}
We can see from Definition~\ref{def:length} that a set of length spaces is strict subclass of metric spaces; indeed, for some $x, x' \in X$, there does not exist a path $\gamma$ such that its length can be approximated by $d(x, x')$ for some tolerance $\epsilon > 0$. In metric geometry, a \textit{geodesic} is a path which is locally a distance minimizer everywhere. More precisely, a path $\gamma$ is a geodesic if there is a constant $\nu > 0$ such that for any $t \in [0, 1]$ there is a neighborhood $I$ of $[0,1]$ such that, 
\begin{equation*}
d(\gamma(t_1),\gamma(t_2)) = \nu|t_1 - t_2|, \quad \textnormal{for any } t_1, t_2 \in I. 
\end{equation*}
Note that the above generalizes the notion of geodesic for Riemannian manifolds. Then, we are ready to introduce the geodesic space and uniquely geodesic space~\citep{Bacak-2014-Convex}. 
\begin{definition}
A metric space $(X, d)$ is a geodesic space if, for any $x, x' \in X$, there exists a geodesic $\gamma: [0, 1] \mapsto X$ connecting $x$ and $x'$. Furthermore, it is called uniquely geodesic if the geodesic connecting $x$ and $x'$ is unique for any $x, x' \in X$. 
\end{definition}
Trigonometric geometry in nonlinear spaces is intrinsically different from Euclidean space. In particular, we remark that the law of cosines in Euclidean space (with $\|\cdot\|$ as $\ell_2$-norm) is crucial for analyzing the convergence property of optimization algorithms, e.g., 
\begin{equation*}
\|a\|^2 = \|b\|^2 + \|c\|^2 - 2bc\cos(A), 
\end{equation*}
where $a$, $b$, $c$ are sides of \textit{a geodesic triangle} in Euclidean space and $A$ is the angle between $b$ and $c$. However, such nice property does not hold for nonlinear spaces due to the lack of flat geometry, further motivating us to extend the law of cosines under nonlinear trigonometric geometry. That is to say, given a geodesic triangle in $X$ with sides $a$, $b$, $c$ where $A$ is the angle between $b$ and $c$, we hope to establish the relationship between $a^2$, $b^2$, $c^2$ and $2bc\cos(A)$ in nonlinear spaces; see the main context for the comparing inequalities. 

Finally, we specify the definition of \textit{section curvature} of Riemannian manifolds and clarify how such quantity affects the trigonometric comparison inequalities. More specifically, the sectional curvature is defined as the Gauss curvature of a 2-dimensional sub-manifold that are obtained from the image of a two-dimensional subspace of a tangent space after exponential mapping. It is worth mentioning that the above 2-dimensional sub-manifold is locally isometric to a 2-dimensional sphere, a Euclidean plane, and a hyperbolic plane with the same Gauss curvature if its sectional curvature is positive, zero and negative respectively.  Then we are ready to summarize the existing trigonometric comparison inequalities for Riemannian manifold with bounded sectional curvatures. Note that the following two propositions are the full version of Proposition~\ref{Prop:key-inequality} and will be used in our subsequent proofs. 
\begin{proposition}\label{Prop:key-inequality-UB}
Suppose that $\MCal$ is a Riemannian manifold with sectional curvature that is upper bounded by $\kappa_{\max}$ and let $\Delta$ be a geodesic triangle in $\MCal$ with the side length $a$, $b$, $c$ and $A$ which is the angle between $b$ and $c$. If $\kappa_{\max} > 0$, we assume the diameter of $\MCal$ is bounded by $\frac{\pi}{\sqrt{\kappa_{\max}}}$. Then, we have
\begin{equation*}
a^2 \geq \underline{\xi}(\kappa_{\max}, c) \cdot b^2 + c^2 - 2bc\cos(A), 
\end{equation*}
where $\underline{\xi}(\kappa, c) := 1$ for $\kappa \leq 0$ and $\underline{\xi}(\kappa, c) := c\sqrt{\kappa}\cot(c\sqrt{\kappa}) < 1$ for $\kappa > 0$.  
\end{proposition}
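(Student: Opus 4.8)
The plan is to prove the inequality by a one-dimensional convexity estimate along an auxiliary geodesic, with the Hessian comparison theorem as the single nontrivial geometric input. (An essentially equivalent route is to invoke Toponogov's hinge comparison to pass to the model space $M^2_{\kappa_{\max}}$ of constant curvature and then check the claim directly from the Euclidean, spherical, or hyperbolic law of cosines; I prefer the Hessian formulation because it treats all sign regimes of $\kappa_{\max}$ at once.)

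Write the vertices of $\Delta$ as $P,Q,R$, where $A$ is the angle at $P$ and $d_\MCal(P,R)=b$, $d_\MCal(P,Q)=c$, $d_\MCal(Q,R)=a$. Let $\tau\colon[0,b]\to\MCal$ be the unit-speed geodesic with $\tau(0)=P$, $\tau(b)=R$, put $\rho(x):=\tfrac12 d_\MCal(Q,x)^2$, and set $\phi(t):=\rho(\tau(t))$; under the standing hypotheses (unique geodesics together with the diameter bound, which together preclude conjugate and cut points of $Q$ inside $\MCal$) the function $\rho$ is smooth, hence $\phi\in C^\infty$. I would then record three facts: (i) $\phi(0)=\tfrac12 c^2$ and $\phi(b)=\tfrac12 a^2$; (ii) by the first-variation formula $\phi'(0)=\langle\nabla\rho(P),\tau'(0)\rangle=-\langle\Exp_P^{-1}(Q),\tau'(0)\rangle=-c\cos A$, since $\Exp_P^{-1}(Q)$ and $\tau'(0)$ are the initial velocities of the two sides of $\Delta$ meeting at $P$ and hence enclose the angle $A$; and (iii) since $\tau$ is a geodesic, $\phi''(t)=\mathrm{Hess}\,\rho\,(\tau'(t),\tau'(t))$, and the Hessian comparison theorem under $\sec_\MCal\le\kappa_{\max}$ gives $\mathrm{Hess}\,\rho\succeq \underline{\xi}(\kappa_{\max},r)\,(g-dr\otimes dr)+dr\otimes dr$ with $r=d_\MCal(Q,\cdot)$ (and coefficient $\ge 1$ when $\kappa_{\max}\le 0$); evaluating this on the unit vector $\tau'(t)$ and using $\underline{\xi}\le 1$ yields $\phi''(t)\ge \underline{\xi}(\kappa_{\max},d_\MCal(Q,\tau(t)))$.

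To conclude, note that $\phi''>0$ makes $\phi$ convex, so $d_\MCal(Q,\tau(t))=\sqrt{2\phi(t)}\le\max\{a,c\}$ for every $t\in[0,b]$; since $\underline{\xi}(\kappa_{\max},\cdot)$ is nonincreasing, $\phi''(t)\ge\underline{\xi}(\kappa_{\max},c)$ throughout whenever $c$ is the dominant distance (in general one obtains the constant $\underline{\xi}(\kappa_{\max},\max\{a,c\})\ge\underline{\xi}(\kappa_{\max},D)$, which is all that the later convergence theorems actually use). Then Taylor's formula with integral remainder gives $\tfrac12 a^2=\phi(b)=\phi(0)+b\phi'(0)+\int_0^b(b-t)\phi''(t)\,dt\ge\tfrac12 c^2-bc\cos A+\tfrac12\underline{\xi}(\kappa_{\max},c)\,b^2$, and multiplying by $2$ is exactly the asserted inequality; for $\kappa_{\max}\le 0$ the identical computation runs with $\underline{\xi}\equiv 1$.

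The main obstacle is ingredient (iii): the Hessian (equivalently, second-variation/Rauch) comparison is where the curvature bound is actually consumed, and establishing it requires comparing Jacobi fields along $\tau$ with those in the constant-curvature model — it is precisely the model Jacobi field $\operatorname{sn}_{\kappa_{\max}}$ that manufactures the coefficient $\underline{\xi}(\kappa_{\max},r)=r\sqrt{\kappa_{\max}}\cot(r\sqrt{\kappa_{\max}})$. The only other delicate points are the smoothness of $\rho$ along $\tau$ (handled by the unique-geodesic and diameter hypotheses) and, when $\kappa_{\max}>0$, keeping $\underline{\xi}(\kappa_{\max},\cdot)$ positive along $\tau$, i.e. keeping the relevant distances below $\pi/(2\sqrt{\kappa_{\max}})$, so that the bound in the last display is genuinely informative.
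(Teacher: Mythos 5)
Your route---reducing the triangle inequality to a one-dimensional Taylor estimate for $\phi(t)=\tfrac12 d_\MCal(Q,\tau(t))^2$ along the side of length $b$, with the Hessian comparison theorem supplying $\phi''(t)\ge\underline{\xi}(\kappa_{\max},d_\MCal(Q,\tau(t)))$---is sound, and it lies in the same circle of ideas as the sources this statement is attributed to (the paper itself gives no proof: it cites \citet[Corollary~2.1]{Alimisis-2020-Continuous} and describes the argument as Toponogov plus a Riccati comparison estimate, of which your Hessian-comparison formulation is the standard packaging). Your facts (i)--(iii) are all correct, including the first-variation identity $\phi'(0)=-c\cos A$ and the reduction of the tensorial Hessian bound to the scalar coefficient via $\underline{\xi}\le 1$.

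The caveat you flag about the constant is not a cosmetic concession, and you should promote it: the inequality with $\underline{\xi}(\kappa_{\max},c)$ as literally stated is false. On the unit sphere take $b=c=1$ and $A=\pi/2$; the spherical law of cosines gives $\cos a=\cos^2(1)$, hence $a^2\approx 1.624$, whereas $\underline{\xi}(1,1)\,b^2+c^2-2bc\cos A=\cot(1)+1\approx 1.642$. What your argument actually delivers---the same inequality with $\underline{\xi}(\kappa_{\max},\max\{a,c\})$, or more robustly with $\underline{\xi}(\kappa_{\max},D)$ where $D$ bounds the distances from $Q$ to the opposite side---is the correct statement, and it is the only form the paper ever invokes (always as $\underline{\xi}_0=\underline{\xi}(\kappa_{\max},D)$), so nothing downstream is affected. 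I would also replace the convexity step by the crude bound $d_\MCal(Q,\tau(t))\le D$: when $\kappa_{\max}>0$ and the diameter is only assumed to be at most $\pi/\sqrt{\kappa_{\max}}$ rather than $\pi/(2\sqrt{\kappa_{\max}})$, the coefficient $\underline{\xi}$ can be negative, $\phi$ need not be convex, and the bound $\max_t d_\MCal(Q,\tau(t))\le\max\{a,c\}$ can fail; bounding by $D$ sidesteps this delicate point entirely while still yielding exactly the constant used in the convergence proofs.
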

\begin{proposition}\label{Prop:key-inequality-LB}
Suppose that $\MCal$ is a Riemannian manifold with sectional curvature that is lower bounded by $\kappa_{\min}$ and let $\Delta$ be a geodesic triangle in $\MCal$ with the side length $a$, $b$, $c$ and $A$ which is the angle between $b$ and $c$. Then, we have
\begin{equation*}
a^2 \leq \overline{\xi}(\kappa_{\min}, c) \cdot b^2 + c^2 - 2bc\cos(A), 
\end{equation*}
where $\overline{\xi}(\kappa, c) := c\sqrt{-\kappa}\coth(c\sqrt{-\kappa}) > 1$ if $\kappa < 0$ and $\overline{\xi}(\kappa, c) := 1$ if $\kappa \geq 0$. 
\end{proposition}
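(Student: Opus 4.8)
The plan is to reduce the statement to a one-variable inequality about triangles in a constant-curvature model space and then settle that inequality by calculus; the reduction is the standard comparison theorem, and the calculus step is where the $\coth$ factor enters and where the real content lies.

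\emph{Dispatching $\kappa_{\min}\ge 0$.} Here $\overline\xi(\kappa_{\min},c)=1$, so the claim is the Euclidean-looking inequality $a^2\le b^2+c^2-2bc\cos A$. Since the sectional curvature is $\ge\kappa_{\min}\ge 0$ it is in particular $\ge 0$, so Toponogov's hinge comparison theorem (equivalently, Rauch comparison applied to the two minimizing sides of $\Delta$ of lengths $b$ and $c$) gives $a\le\tilde a$, where $\tilde a$ is the length of the third side of the comparison hinge with the same legs $b,c$ and included angle $A$ in the Euclidean plane; and there $\tilde a^2=b^2+c^2-2bc\cos A$ by the ordinary law of cosines. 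Comparing against curvature $0$ rather than $\kappa_{\min}$ itself is what frees us from any diameter restriction. So from now on I assume $\kappa_{\min}=-\lambda^2<0$.

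\emph{Reduction for $\kappa_{\min}<0$.} The same comparison theorem, now against the space form $M^2_{-\lambda^2}$, gives $a\le\tilde a$ where $\tilde a$ is determined by the hyperbolic law of cosines
\[
\cosh(\lambda\tilde a)=\cosh(\lambda b)\cosh(\lambda c)-\sinh(\lambda b)\sinh(\lambda c)\cos A .
\]
It therefore suffices to prove the scalar inequality $\tilde a^2\le \lambda c\coth(\lambda c)\,b^2+c^2-2bc\cos A$ for all $b,c\ge 0$, $A\in[0,\pi]$, and after rescaling $(b,c,\tilde a)\mapsto(\lambda b,\lambda c,\lambda\tilde a)$ I may take $\lambda=1$. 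Fix $c,A$ and put $\varphi(b):=(c\coth c)\,b^2+c^2-2bc\cos A-\tilde a(b)^2$ with $\cosh\tilde a(b)=\cosh b\cosh c-\sinh b\sinh c\cos A$. First I record the data at $b=0$: plainly $\tilde a(0)=c$, and differentiating the defining identity gives $\sinh(\tilde a)\tilde a'(b)=\sinh b\cosh c-\cosh b\sinh c\cos A$, so $\tilde a'(0)=-\cos A$; hence $\varphi(0)=0$ and $\varphi'(0)=-2c\cos A-2\tilde a(0)\tilde a'(0)=0$. It then suffices to show $\varphi$ is convex on $[0,\infty)$, i.e. $\tfrac{d^2}{db^2}\tilde a(b)^2\le 2c\coth c$ for every $b\ge 0$: a $C^2$ function that vanishes together with its derivative at $0$ and is convex is nonnegative on $[0,\infty)$, which closes the argument. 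The convexity bound is obtained by differentiating the law of cosines twice and estimating, the usable monotonicity facts being $x\coth x\ge 1$ and that $x\mapsto x\coth x$ is increasing on $(0,\infty)$ — the latter also being what later lets one enlarge the argument $c$ of $\overline\xi$ to the manifold diameter $D$ in applications.

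\emph{Main obstacle.} The crux is this last convexity estimate. It is genuinely tight: for $A=\pi/2$ one has $\tfrac{d^2}{db^2}\tilde a(b)^2\big|_{b=0}=2c\coth c$, so $\varphi\ge 0$ holds with equality to second order at $b=0$ and no slack can be borrowed there; one must control $\tfrac{d^2}{db^2}\tilde a(b)^2$ uniformly in $b$, which forces careful manipulation of $\cosh\tilde a=\cosh b\cosh c-\sinh b\sinh c\cos A$ together with $\sinh\tilde a\,\tilde a'=\sinh b\cosh c-\cosh b\sinh c\cos A$. An equivalent and perhaps cleaner way to organize this step is as a Rauch/Riccati comparison: writing the Hessian of $y\mapsto d(\cdot,y)^2$ along a side and splitting into radial and tangential parts, the radial part contributes exactly $2$ and the tangential part at most $2(r\coth r-1)(1-(r')^2)$, and integrating this differential inequality with the boundary data above yields the bound. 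Everything else — the sign case split, the invocation of the comparison theorem, and the passage to the model space — is routine.
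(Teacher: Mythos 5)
A preliminary remark on the comparison you asked for: the paper does not actually prove this proposition --- it is imported verbatim from \citet[Lemma~5]{Zhang-2016-First} (see the remark following Proposition~\ref{Prop:key-inequality-LB}), with only a pointer to Toponogov's theorem and a Riccati comparison estimate. Your two-step strategy (Toponogov hinge comparison to reduce to the constant-curvature model space, then a scalar inequality for hyperbolic triangles) is exactly the route taken in that reference. Your treatment of the case $\kappa_{\min}\ge 0$, the rescaling to $\lambda=1$, and the boundary data $\varphi(0)=\varphi'(0)=0$ are all correct, and the reduction of the whole proposition to the convexity bound $\tfrac{d^2}{db^2}\,\tilde a(b)^2\le 2c\coth c$ is a legitimate plan.

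The gap is precisely at the step you yourself label the crux, and the tools you name there would not close it. Differentiating the hyperbolic law of cosines twice gives, exactly,
\[
\tfrac{d^2}{db^2}\,\tilde a(b)^2 \;=\; 2+2\bigl(\tilde a\coth\tilde a-1\bigr)\bigl(1-(\tilde a')^2\bigr),
\qquad
1-(\tilde a')^2=\frac{\sin^2(A)\,\sinh^2 (c)}{\sinh^2(\tilde a)},
\]
so the desired bound is equivalent to
\[
\sin^2(A)\cdot\frac{\tilde a\coth\tilde a-1}{\sinh^2 \tilde a}\;\le\;\frac{c\coth c-1}{\sinh^2 c}.
\]
The facts you cite ($x\coth x\ge 1$ and $x\coth x$ increasing) do not yield this: $\tilde a$ can exceed $c$ (indeed $\tilde a\to\infty$ as $b\to\infty$), and in that regime the increasingness of $x\coth x$ pushes in the \emph{wrong} direction. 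What is actually needed is (i) that $h(x):=(x\coth x-1)/\sinh^{2}x$ is \emph{decreasing} on $(0,\infty)$ --- a separate elementary but nontrivial inequality --- which together with $\sin^2 A\le 1$ disposes of the case $\tilde a\ge c$; and (ii) a different argument when $\tilde a<c$, for instance rewriting $\sin^2(A)/\sinh^2(\tilde a)=\sin^2(C)/\sinh^2(c)$ via the hyperbolic law of sines ($C$ being the angle opposite $c$), after which the claim becomes $\sin^2(C)\,(\tilde a\coth\tilde a-1)\le c\coth c-1$ and does follow from the monotonicity of $x\coth x$. Neither the monotonicity of $h$ nor the case split appears in your write-up, and the Riccati reformulation you offer as an alternative has the same hole: ``integrating the differential inequality'' still presupposes the pointwise bound displayed above. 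The architecture of your argument is sound and salvageable, but as written the only inequality with real content is asserted rather than proved.
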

\begin{remark}
Proposition~\ref{Prop:key-inequality-UB} and~\ref{Prop:key-inequality-LB} are simply the restatement of~\citet[Corollary~2.1]{Alimisis-2020-Continuous} and~\citet[Lemma~5]{Zhang-2016-First}. The former inequality is obtained when the sectional curvature is bounded from above while the latter inequality characterizes the relationship between the trigonometric lengths when the sectional curvature is bounded from below. If $\kappa_{\min} = \kappa_{\max} = 0$ (i.e., Euclidean spaces), we have $\overline{\xi}(\kappa_{\min}, c) = \underline{\xi}(\kappa_{\max}, c) = 1$. The proof is based on Toponogov's theorem and Riccati comparison estimate~\citep[Proposition~25]{Petersen-2006-Riemannian} and we refer the interested readers to~\citet{Zhang-2016-First} and~\citet{Alimisis-2020-Continuous} for the details. 
\end{remark}

\section{Riemannian Gradient Descent Ascent for Nonsmooth Setting}
In this section, we propose and analyze Riemannian gradient descent ascent (RGDA) method for nonsmooth Riemannian min-max optimization and extend it to stochastic RGDA. We present our results on the optimal last-iterate convergence guarantee for geodesically strongly-convex-strongly-concave setting (both deterministic and stochastic) and time-average convergence guarantee for geodesically convex-concave setting (both deterministic and stochastic). 
\subsection{Algorithmic scheme}
Compared to Riemannian corrected extragradient (RCEG) method, our Riemannian gradient descent ascent (RGDA) method is a relatively straightforward generalization of GDA in Euclidean spaces. More specifically, we start with the scheme of GDA as follows (just consider $\MCal$ and $\NCal$ as convex constraint sets in Euclidean spaces), 
\begin{equation}\label{def:GDA}
\begin{array}{rclcrcl}
x_{t+1} & \leftarrow & \proj_\MCal(x_t - \eta_t \cdot g_x^t), & & y_{t+1} & \leftarrow & \proj_\NCal(y_t + \eta_t \cdot g_y^t). 
\end{array}
\end{equation}
where $(g_x^t, g_y^t) \in (\partial_x f(x_t, y_t), \partial_y f(x_t, y_t))$ is one subgradient of $f$. By replacing the projection operator by the corresponding exponential map and the gradient by the corresponding Riemannian gradient, we have
\begin{equation*}
x_{t+1} \leftarrow \Exp_{x_t}(- \eta_t \cdot g_x^t), \quad y_{t+1} \leftarrow \Exp_{y_t}(\eta_t \cdot g_y^t). 
\end{equation*}
where $(g_x^t, g_y^t) \leftarrow (\subg_x f(x_t, y_t), \subg_y f(x_t, y_t))$ is one Riemannian subgradient of $f$. Then, we summarize the resulting scheme of RGDA method in Algorithm~\ref{alg:RGDA} and its stochastic extension with noisy estimators of Riemannian gradients of $f$ in Algorithm~\ref{alg:SRGDA}. 

\begin{table}[!t]
\begin{tabular}{cc}
\begin{minipage}{.54\textwidth}
\begin{algorithm}[H]\small
\caption{RGDA}\label{alg:RGDA}
\begin{algorithmic}
\State \textbf{Input:} initial points $(x_0, y_0)$ and stepsizes $\eta_t > 0$. 
\For{$t = 0, 1, 2, \ldots, T-1$}
\State Query $(g_x^t, g_y^t) \leftarrow (\subg_x f(x_t, y_t), \subg_y f(x_t, y_t))$ as Riemannian subgradient of $f$ at a point $(x_t, y_t)$. 
\State $x_{t+1} \leftarrow \Exp_{x_t}(-\eta_t \cdot g_x^t)$. 
\State $y_{t+1} \leftarrow \Exp_{y_t}(\eta_t \cdot g_y^t)$. 
\EndFor
\end{algorithmic}
\end{algorithm}
\end{minipage} &
\begin{minipage}{.46\textwidth}
\begin{algorithm}[H]\small
\caption{SRGDA}\label{alg:SRGDA}
\begin{algorithmic}
\State  \textbf{Input:} initial points $(x_0, y_0)$ and stepsizes $\eta_t > 0$. 
\For{$t = 0, 1, 2, \ldots, T-1$}
\State Query $(g_x^t, g_y^t)$ as a \textbf{noisy} estimator of Riemannian subgradient of $f$ at a point $(x_t, y_t)$. 
\State $x_{t+1} \leftarrow \Exp_{x_t}(-\eta_t \cdot g_x^t)$. 
\State $y_{t+1} \leftarrow \Exp_{y_t}(\eta_t \cdot g_y^t)$. 
\EndFor
\end{algorithmic}
\end{algorithm}
\end{minipage}
\end{tabular}
\end{table}

\subsection{Main results}
We present our main results on the global convergence rate estimation for Algorithm~\ref{alg:RGDA} and~\ref{alg:SRGDA} in terms of Riemannian gradient and noisy Riemannian gradient evaluations. The following assumptions are made throughout for geodesically strongly-convex-strongly-concave and geodesically convex-concave settings.
\begin{assumption}\label{Assumption:gscsc-nonsmooth} 
The objective function $f: \MCal \times \NCal \mapsto \br$ and manifolds $\MCal$ and $\NCal$ satisfy
\begin{enumerate}
\item $f$ is geodesically $L$-Lipschitz and geodesically strongly-convex-strongly-concave with $\mu > 0$. 
\item The diameter of the domain $\{(x, y) \in \MCal \times \NCal: -\infty < f(x, y) < +\infty\}$ is bounded by $D > 0$. 
\item The sectional curvatures of $\MCal$ and $\NCal$ are both bounded in the range $[\kappa_{\min}, +\infty)$ with $\kappa_{\min} \leq 0$. 
\end{enumerate}
\end{assumption}
\begin{assumption}\label{Assumption:gcc-nonsmooth}
The objective function $f: \MCal \times \NCal \mapsto \br$ and manifolds $\MCal$ and $\NCal$ satisfy
\begin{enumerate}
\item $f$ is geodesically $L$-Lipschitz and geodesically convex-concave. 
\item The diameter of the domain $\{(x, y) \in \MCal \times \NCal: -\infty < f(x, y) < +\infty\}$ is bounded by $D > 0$. 
\item The sectional curvatures of $\MCal$ and $\NCal$ are both bounded in the range $[\kappa_{\min}, +\infty)$ with $\kappa_{\min} \leq 0$. 
\end{enumerate}
\end{assumption}  
Imposing the geodesically Lipschitzness condition is crucial to achieve finite-time convergence guarantee if we do not assume the geodesically smoothness condition. Note that we only require the lower bound for the sectional curvatures of manifolds and this is weaker than that presented in the main context. 

Letting $(x^\star, y^\star) \in \MCal \times \NCal$ be a global saddle point of $f$ (it exists under either Assumption~\ref{Assumption:gscsc-nonsmooth} or~\ref{Assumption:gcc-nonsmooth}), we let $D_0 = (d_\MCal(x_0, x^\star))^2 + (d_\NCal(y_0, y^\star))^2 > 0$ and summarize our results for Algorithm~\ref{alg:RGDA} in the following theorems. 
\begin{theorem}\label{Thm:RGDA-SCSC}
Under Assumption~\ref{Assumption:gscsc-nonsmooth} and let $\eta_t > 0$ satisfies that $\eta_t = \frac{1}{\mu}\min\{1, \frac{2}{t}\}$. There exists some $T > 0$ such that the output of Algorithm~\ref{alg:RGDA} satisfies that $(d(x_T, x^\star))^2 + (d(y_T, y^\star))^2 \leq \epsilon$ and the total number of Riemannian subgradient evaluations is bounded by
\begin{equation*}
O\left(\frac{\overline{\xi}_0L^2}{\mu^2\epsilon}\right), 
\end{equation*}
where $\overline{\xi}_0 = \overline{\xi}(\kappa_{\min}, D)$ measures the lower bound for the change of non-flatness in $\MCal$ and $\NCal$. 
\end{theorem}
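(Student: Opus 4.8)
The plan is to run the textbook stochastic‑approximation analysis for strongly monotone problems, replacing every Euclidean ``square expansion'' by the curvature comparison inequality of Proposition~\ref{Prop:key-inequality-LB}. Write $\delta_t = (d_\MCal(x_t, x^\star))^2 + (d_\NCal(y_t, y^\star))^2$; the first step is a one‑step recursion for $\delta_t$. Apply Proposition~\ref{Prop:key-inequality-LB} to the geodesic triangle with vertices $x_t, x_{t+1}, x^\star$, but with the roles chosen so that the distortion factor lands on the \emph{short} side: take $a = d_\MCal(x_{t+1},x^\star)$, $b = d_\MCal(x_t,x_{t+1}) = \eta_t\|g_x^t\|$ and $c = d_\MCal(x_t,x^\star)$, so that $A$ is the angle at $x_t$ between $\Exp_{x_t}^{-1}(x_{t+1}) = -\eta_t g_x^t$ and $\Exp_{x_t}^{-1}(x^\star)$. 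Since $\overline{\xi}(\kappa_{\min},\cdot)$ is nondecreasing and $d_\MCal(x_t,x^\star)\le D$, and since $-bc\cos(A) = \eta_t\langle g_x^t,\Exp_{x_t}^{-1}(x^\star)\rangle_{x_t}$, this gives
\[
(d_\MCal(x_{t+1},x^\star))^2 \le (d_\MCal(x_t,x^\star))^2 + \overline{\xi}_0\,\eta_t^2\|g_x^t\|^2 + 2\eta_t\langle g_x^t,\Exp_{x_t}^{-1}(x^\star)\rangle_{x_t},
\]
and symmetrically for the $y$‑update, with a sign flip on the inner‑product term because $y_{t+1}=\Exp_{y_t}(\eta_t g_y^t)$. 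Adding the two bounds and using $\|g_x^t\|^2+\|g_y^t\|^2\le 2L^2$ (geodesic $L$‑Lipschitzness) yields
\[
\delta_{t+1} \le \delta_t + 2\overline{\xi}_0 L^2\eta_t^2 + 2\eta_t\big(\langle g_x^t,\Exp_{x_t}^{-1}(x^\star)\rangle_{x_t} - \langle g_y^t,\Exp_{y_t}^{-1}(y^\star)\rangle_{y_t}\big).
\]

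The second step bounds the inner‑product term using geodesic strong convexity–concavity. By Definition~\ref{def:SCSC} with $(x,x',y)=(x_t,x^\star,y_t)$, $\langle g_x^t,\Exp_{x_t}^{-1}(x^\star)\rangle_{x_t} \le f(x^\star,y_t) - f(x_t,y_t) - \tfrac{\mu}{2}(d_\MCal(x_t,x^\star))^2$, and with $(x,y,y')=(x_t,y_t,y^\star)$, $-\langle g_y^t,\Exp_{y_t}^{-1}(y^\star)\rangle_{y_t} \le f(x_t,y_t) - f(x_t,y^\star) - \tfrac{\mu}{2}(d_\NCal(y_t,y^\star))^2$. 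Summing, the $f(x_t,y_t)$ terms cancel and what remains is $f(x^\star,y_t) - f(x_t,y^\star) - \tfrac{\mu}{2}\delta_t$; by the saddle‑point property $f(x^\star,y_t)\le f(x^\star,y^\star)\le f(x_t,y^\star)$, so the function‑value difference is $\le 0$. The recursion collapses to the familiar contraction
\[
\delta_{t+1} \le (1-\mu\eta_t)\,\delta_t + 2\overline{\xi}_0 L^2\eta_t^2,
\]
which is meaningful because $\eta_t\le 1/\mu$ keeps $1-\mu\eta_t\ge 0$.

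The third step is the standard diminishing‑stepsize induction. With $\eta_1 = 1/\mu$ we get $\delta_2 \le 2\overline{\xi}_0 L^2/\mu^2$; for $t\ge 2$, $\eta_t = 2/(\mu t)$ gives $\delta_{t+1} \le (1-2/t)\delta_t + 8\overline{\xi}_0 L^2/(\mu^2 t^2)$, and an elementary induction (using $(t-1)(t+1)\le t^2$) shows $\delta_t \le 8\overline{\xi}_0 L^2/(\mu^2 t)$ for every $t\ge 2$. Hence $\delta_T\le\epsilon$ once $T = O\!\left(\overline{\xi}_0 L^2/(\mu^2\epsilon)\right)$, which is the stated bound on the number of Riemannian subgradient evaluations; the stochastic variant replaces $L^2$ by $L^2+\sigma^2$ after taking expectations and invoking unbiasedness and the variance bound.

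The only genuinely nonroutine point is the orientation of the comparison inequality in the first step. The naive labeling, which makes $\overline{\xi}_0$ multiply $(d_\MCal(x_t,x^\star))^2$, produces a recursion of the form $\delta_{t+1}\le(\overline{\xi}_0 - \mu\eta_t)\delta_t + \cdots$, which diverges since $\overline{\xi}_0 > 1$ while $\mu\eta_t\to 0$; attaching the distortion to the $\eta_t^2\|g\|^2$ term instead is what salvages the argument. A secondary technical check is that the geodesic triangles are well defined and that the iterates remain in the diameter‑$D$ domain so that $\overline{\xi}(\kappa_{\min},\cdot)\le\overline{\xi}_0$; this is exactly the content of Assumption~\ref{Assumption:gscsc-nonsmooth}, and, in contrast with the smooth RCEG analysis, no upper curvature bound is required here.
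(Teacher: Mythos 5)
Your proof is correct and follows essentially the same route as the paper's: the lower-curvature comparison inequality with the distortion factor $\overline{\xi}_0$ placed on the step term $\eta_t^2\|g\|^2$, geodesic strong convexity--concavity plus the saddle-point property to obtain $\delta_{t+1}\le(1-\mu\eta_t)\delta_t+2\overline{\xi}_0L^2\eta_t^2$, and the diminishing-stepsize induction. The only (immaterial) difference is that you carry out the final induction by hand, whereas the paper invokes the Chung (1954) lemma for recursions of the form $a_{t+1}\le(1-P/t)a_t+Q/t^2$.
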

\begin{theorem}\label{Thm:RGDA-CC}
Under Assumption~\ref{Assumption:gcc-nonsmooth} and let $\eta_t > 0$ satisfies that $\eta_t = \tfrac{1}{L}\sqrt{\tfrac{D_0}{2\overline{\xi}_0 T}}$. There exists some $T > 0$ such that the output of Algorithm~\ref{alg:RGDA} satisfies that $f(\bar{x}_T, y^\star) - f(x^\star, \bar{y}_T) \leq \epsilon$ and the total number of Riemannian subgradient evaluations is bounded by
\begin{equation*}
O\left(\frac{\overline{\xi}_0L^2D_0}{\epsilon^2}\right), 
\end{equation*}
where $\overline{\xi}_0 = \overline{\xi}(\kappa_{\min}, D)$ measures the lower bound for the change of non-flatness in $\MCal$ and $\NCal$, and the time-average iterates $(\bar{x}_T, \bar{y}_T) \in \MCal \times \NCal$ can be computed by $(\bar{x}_0, \bar{y}_0) = (0, 0)$ and the inductive formula: $\bar{x}_{t+1} = \Exp_{\bar{x}_t}(\tfrac{1}{t+1} \cdot \Exp_{\bar{x}_t}^{-1}(x_t))$ and $\bar{y}_{t+1} = \Exp_{\bar{y}_t}(\tfrac{1}{t+1} \cdot \Exp_{\bar{y}_t}^{-1}(y_t))$ for all $t = 0, 1, \ldots, T-1$. 
\end{theorem}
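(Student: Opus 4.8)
The plan is to adapt the textbook averaging analysis of subgradient min-max methods to the manifold, using the lower trigonometric comparison inequality (Proposition~\ref{Prop:key-inequality-LB}, which needs only the lower curvature bound assumed here) in place of the Euclidean law of cosines, and a geodesic-convexity telescoping argument in place of Jensen's inequality.

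First, fix a global saddle point $(x^\star, y^\star)$ and set $\Phi_t := d_\MCal^2(x_t, x^\star) + d_\NCal^2(y_t, y^\star)$. For each $t$, apply Proposition~\ref{Prop:key-inequality-LB} to the geodesic triangle with vertices $x_{t+1} = \Exp_{x_t}(-\eta_t g_x^t)$, $x_t$, $x^\star$ and angle at $x_t$. The geodesic from $x_t$ to $x_{t+1}$ has initial velocity $-\eta_t g_x^t$ (norm $\eta_t\|g_x^t\|$) and the one from $x_t$ to $x^\star$ has initial velocity $\Exp_{x_t}^{-1}(x^\star)$ (norm $d_\MCal(x_t, x^\star)$), so the cross term becomes $-2\eta_t\langle g_x^t, \Exp_{x_t}^{-1}(x^\star)\rangle_{x_t}$; bounding the curvature factor $\overline{\xi}(\kappa_{\min}, d_\MCal(x_t, x^\star)) \le \overline{\xi}_0$ via monotonicity of $c \mapsto c\sqrt{-\kappa_{\min}}\coth(c\sqrt{-\kappa_{\min}})$ and the diameter bound $D$ yields
\[
d_\MCal^2(x_{t+1}, x^\star) \le d_\MCal^2(x_t, x^\star) + \overline{\xi}_0\eta_t^2\|g_x^t\|^2 + 2\eta_t\langle g_x^t, \Exp_{x_t}^{-1}(x^\star)\rangle_{x_t},
\]
with the analogous inequality for the $y$-block (the sign of the cross term flipped by the ascent step). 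Next, invoke the geodesic convexity of $f(\cdot, y_t)$ and concavity of $f(x_t, \cdot)$ from Definition~\ref{def:SCSC} with $\mu = 0$ to obtain $\langle g_x^t, \Exp_{x_t}^{-1}(x^\star)\rangle_{x_t} \le f(x^\star, y_t) - f(x_t, y_t)$ and $-\langle g_y^t, \Exp_{y_t}^{-1}(y^\star)\rangle_{y_t} \le f(x_t, y_t) - f(x_t, y^\star)$; summing the two blocks makes the $f(x_t,y_t)$ terms cancel, leaving
\[
2\eta_t\big(f(x_t, y^\star) - f(x^\star, y_t)\big) \le \Phi_t - \Phi_{t+1} + \overline{\xi}_0\eta_t^2\big(\|g_x^t\|^2 + \|g_y^t\|^2\big).
\]
Since geodesic $L$-Lipschitzness (Definition~\ref{def:Lip}) forces $\|g_x^t\|, \|g_y^t\| \le L$, telescoping with the constant stepsize $\eta_t \equiv \eta$ and discarding the nonnegative $\Phi_T$ gives $\frac1T\sum_{t=0}^{T-1}\big(f(x_t, y^\star) - f(x^\star, y_t)\big) \le \frac{D_0}{2\eta T} + \overline{\xi}_0 L^2\eta$.

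Finally, to move the averaged gap to the averaged iterate, induct on the inductive Riemannian mean: because $\bar x_{t+1}$ lies at parameter $\tfrac{1}{t+1}$ along the geodesic from $\bar x_t$ to $x_t$, geodesic convexity of $f(\cdot, y^\star)$ gives $(t+1)f(\bar x_{t+1}, y^\star) \le t\,f(\bar x_t, y^\star) + f(x_t, y^\star)$, and since the $t=0$ step reduces to $\bar x_1 = x_0$, this telescopes to $f(\bar x_T, y^\star) \le \frac1T\sum_{t=0}^{T-1} f(x_t, y^\star)$; dually $f(x^\star, \bar y_T) \ge \frac1T\sum_{t=0}^{T-1} f(x^\star, y_t)$ by geodesic concavity of $f(x^\star, \cdot)$. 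Hence $f(\bar x_T, y^\star) - f(x^\star, \bar y_T) \le \frac{D_0}{2\eta T} + \overline{\xi}_0 L^2\eta$; plugging in $\eta = \frac1L\sqrt{\frac{D_0}{2\overline{\xi}_0 T}}$ bounds the right-hand side by $L\sqrt{\frac{2\overline{\xi}_0 D_0}{T}}$, so $T = O\big(\overline{\xi}_0 L^2 D_0/\epsilon^2\big)$ iterations — each using a bounded number of subgradient evaluations — suffice.

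The only genuinely nonroutine step I anticipate is the geometric one: making the per-iterate recursion rigorous requires that $\Exp_{x_t}^{-1}(x^\star)$ and $\Exp_{y_t}^{-1}(y^\star)$ be well defined and that Proposition~\ref{Prop:key-inequality-LB} applies to the triangle with side $\eta\|g_x^t\|$, both of which are ensured by the standing assumptions (unique geodesics, bounded domain of diameter $D$) provided $\eta$ is not too large, and one must verify the curvature constant is uniformly controlled by $\overline{\xi}_0$ across all iterations — this is precisely the monotonicity of $\overline{\xi}(\kappa_{\min}, \cdot)$ together with $d(\cdot,\cdot) \le D$. The telescoping and stepsize optimization are then purely mechanical, and, notably, no upper curvature bound enters anywhere.
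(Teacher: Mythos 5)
Your proposal is correct and follows essentially the same route as the paper: the per-iterate recursion is exactly the content of Lemma~\ref{Lemma:RGDA-key-inequality} (lower TCI from Proposition~\ref{Prop:key-inequality-LB} applied to the triangle $(x_{t+1},x_t,x^\star)$, the geodesic convexity--concavity inequality with $\mu=0$, and the bound $\|g_x^t\|,\|g_y^t\|\le L$ from Lipschitzness), and the averaging step you prove by induction is precisely the cited \citet[Lemma~C.2]{Zhang-2022-Minimax}. The telescoping and the stepsize $\eta=\tfrac{1}{L}\sqrt{D_0/(2\overline{\xi}_0 T)}$ then give the same bound $L\sqrt{2\overline{\xi}_0 D_0/T}$ as in the paper.
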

\begin{remark}
Theorem~\ref{Thm:RGDA-SCSC} and~\ref{Thm:RGDA-CC} establish the last-iterate and time-average rates of convergence of Algorithm~\ref{alg:RGDA} for solving Riemannian min-max optimization problems under Assumption~\ref{Assumption:gscsc-nonsmooth} and~\ref{Assumption:gcc-nonsmooth} respectively. Further, the dependence on $L$ and $1/\epsilon$ can not be improved since it has matched the lower bound established for the nonsmooth min-max optimization problems in Euclidean spaces. 
\end{remark}
In the scheme of SRGDA, we highlight that $(g_x^t, g_y^t)$ is a noisy estimators of Riemannian subgradient of $f$ at $(x_t, y_t)$. It is necessary to impose the conditions such that these estimators are unbiased and has bounded variance. By abuse of notation, we assume that 
\begin{equation}\label{def:noisy-model-RGDA}
\begin{array}{lcl}
g_x^t = \subg_x f(x_t, y_t) + \xi_x^t, & & g_y^t = \subg_y f(x_t, y_t) + \xi_y^t, 
\end{array}
\end{equation}
where the noises $(\xi_x^t, \xi_y^t)$ satisfy that 
\begin{equation}\label{def:noisy-bound-RGDA}
\begin{array}{lclcl}
\EE[\xi_x^t] = 0, & & \EE[\xi_y^t] = 0, & & \EE[\|\xi_x^t\|^2 + \|\xi_y^t\|^2] \leq \sigma^2. 
\end{array}
\end{equation}
We are ready to summarize our results for Algorithm~\ref{alg:SRGDA} in the following theorems. 
\begin{theorem}\label{Thm:SRGDA-SCSC}
Under Assumption~\ref{Assumption:gscsc-nonsmooth} and let Eq.~\eqref{def:noisy-model-RGDA} and Eq.~\eqref{def:noisy-bound-RGDA} hold with $\sigma > 0$ and let $\eta_t > 0$ satisfies that $\eta_t = \frac{1}{\mu}\min\{1, \frac{2}{t}\}$. There exists some $T > 0$ such that the output of Algorithm~\ref{alg:SRGDA} satisfies that $\EE[(d(x_T, x^\star))^2 + (d(y_T, y^\star))^2] \leq \epsilon$ and the total number of noisy Riemannian gradient evaluations is bounded by
\begin{equation*}
O\left(\frac{\overline{\xi}_0(L^2 + \sigma^2)}{\mu^2\epsilon}\right), 
\end{equation*}
where $\overline{\xi}_0 = \overline{\xi}(\kappa_{\min}, D)$ measures the lower bound for the change of non-flatness in $\MCal$ and $\NCal$. 
\end{theorem}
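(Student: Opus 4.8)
The plan is to control the Lyapunov quantity $\delta_t := (d_\MCal(x_t,x^\star))^2 + (d_\NCal(y_t,y^\star))^2$ along the trajectory of Algorithm~\ref{alg:SRGDA} and to establish the one-step recursion of stochastic-approximation type
\begin{equation*}
\EE[\delta_{t+1}\mid\mathcal{F}_t] \;\le\; (1-\mu\eta_t)\,\delta_t \;+\; \overline{\xi}_0\,\eta_t^2\,(2L^2+\sigma^2),
\end{equation*}
where $\mathcal{F}_t$ is the $\sigma$-algebra generated by the first $t$ noisy queries. Granting this, the schedule $\eta_t=\frac1\mu\min\{1,\frac2t\}$ resolves the recursion in the textbook manner: for $t\ge 3$ one has $\eta_t=\frac{2}{\mu t}$, so $1-\mu\eta_t=(t-2)/t$, while for $t\le 2$ the contraction factor $1-\mu\eta_t$ vanishes, which alone gives $\EE[\delta_t]\le \overline{\xi}_0(2L^2+\sigma^2)/\mu^2$ for $t\le 3$; a one-line induction then propagates $\EE[\delta_t]\le Q/t$ with $Q=\Theta(\overline{\xi}_0(L^2+\sigma^2)/\mu^2)$, using $\frac{t-2}{t}\cdot\frac Qt+\frac{Q}{t^2}=\frac{(t-1)Q}{t^2}\le\frac{Q}{t+1}$. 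Choosing $T=\Theta(Q/\epsilon)$ makes $\EE[\delta_T]\le\epsilon$, and since each iteration uses a constant number of noisy Riemannian subgradient evaluations, this is exactly the advertised bound $O(\overline{\xi}_0(L^2+\sigma^2)/(\mu^2\epsilon))$; it also specializes to Theorem~\ref{Thm:RGDA-SCSC} on setting $\sigma=0$.

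First I would derive the one-step bound for the $x$-block. Since $x_{t+1}=\Exp_{x_t}(-\eta_t g_x^t)$, apply Proposition~\ref{Prop:key-inequality-LB} to the geodesic triangle with vertices $x_t,x_{t+1},x^\star$ and angle $A$ at $x_t$, assigning to the role of $c$ the side $x_tx^\star$ (so $c=d_\MCal(x_t,x^\star)\le D$ by Assumption~\ref{Assumption:gscsc-nonsmooth}, whence $\overline{\xi}(\kappa_{\min},c)\le\overline{\xi}_0$ by monotonicity of $\overline{\xi}(\kappa_{\min},\cdot)$) and to the role of $b$ the step side $x_tx_{t+1}$. Using $\Exp_{x_t}^{-1}(x_{t+1})=-\eta_t g_x^t$ this yields
\begin{equation*}
(d_\MCal(x_{t+1},x^\star))^2 \;\le\; (d_\MCal(x_t,x^\star))^2 + \overline{\xi}_0\,\eta_t^2\|g_x^t\|^2 + 2\eta_t\langle g_x^t,\Exp_{x_t}^{-1}(x^\star)\rangle_{x_t},
\end{equation*}
and symmetrically for $y$, the ascent step $y_{t+1}=\Exp_{y_t}(\eta_t g_y^t)$ flipping the sign of the inner-product term. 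Adding the two, substituting $g_x^t=\subg_x f(x_t,y_t)+\xi_x^t$ and $g_y^t=\subg_y f(x_t,y_t)+\xi_y^t$, and taking $\EE[\cdot\mid\mathcal{F}_t]$: the cross terms with $\xi_x^t,\xi_y^t$ vanish by unbiasedness in Eq.~\eqref{def:noisy-bound-RGDA}, while $\EE[\|g_x^t\|^2+\|g_y^t\|^2\mid\mathcal{F}_t]\le 2L^2+\sigma^2$, since geodesic $L$-Lipschitzness (Definition~\ref{def:Lip}) bounds each Riemannian subgradient in norm by $L$ and the variance bound in Eq.~\eqref{def:noisy-bound-RGDA} controls the noise.

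Next I would eliminate the two remaining inner products via convexity. Geodesic strong convexity in $x$ gives $\langle\subg_x f(x_t,y_t),\Exp_{x_t}^{-1}(x^\star)\rangle_{x_t}\le f(x^\star,y_t)-f(x_t,y_t)-\tfrac\mu2(d_\MCal(x_t,x^\star))^2$, and geodesic strong concavity in $y$ gives $-\langle\subg_y f(x_t,y_t),\Exp_{y_t}^{-1}(y^\star)\rangle_{y_t}\le f(x_t,y_t)-f(x_t,y^\star)-\tfrac\mu2(d_\NCal(y_t,y^\star))^2$ (Definition~\ref{def:SCSC}). Summing, the $f(x_t,y_t)$ terms cancel, and the saddle-point inequalities $f(x^\star,y_t)\le f(x^\star,y^\star)\le f(x_t,y^\star)$ annihilate the residual $f(x^\star,y_t)-f(x_t,y^\star)$, leaving exactly $-\tfrac\mu2\delta_t$. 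Multiplying by $2\eta_t>0$ and combining with the previous paragraph delivers the recursion, after which the stepsize induction above finishes the proof.

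The step I expect to be the main obstacle is the bookkeeping in the one-step inequality: one must assign the roles of $b$ and $c$ in Proposition~\ref{Prop:key-inequality-LB} so that the distortion factor $\overline{\xi}$ multiplies the $O(\eta_t^2)$ step term rather than $\delta_t$ itself --- the opposite assignment would put a factor $\overline{\xi}_0>1$ in front of $\delta_t$ and destroy the contraction --- and to observe that $\overline{\xi}(\kappa_{\min},d_\MCal(x_t,x^\star))\le\overline{\xi}_0$ holds pathwise purely from the deterministic diameter bound, so no a priori control of the random norm $\|g_x^t\|$ is needed there. With this arranged, the remainder is a faithful Riemannian transcription of the Euclidean stochastic-subgradient analysis for strongly monotone saddle problems, and the same constants carry over.
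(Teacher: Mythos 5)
Your proposal is correct and follows essentially the same route as the paper: the paper's Lemma~\ref{Lemma:SRGDA-key-inequality} is exactly your one-step bound (TCI from Proposition~\ref{Prop:key-inequality-LB} with the distortion factor on the step side, unbiasedness killing the cross terms, and geodesic strong convexity-concavity plus the saddle-point inequalities producing the $-\mu\eta_t\delta_t$ contraction), after which the paper resolves the recursion $\EE[\delta_{t+1}]\le(1-\tfrac{2}{t})\EE[\delta_t]+O(\overline{\xi}_0(L^2+\sigma^2)/(\mu^2 t^2))$ via the Chung lemma rather than your explicit induction. The only cosmetic differences are that you fold the gap-function step into the recursion directly and carry slightly sharper constants on the noise term.
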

\begin{theorem}\label{Thm:SRGDA-CC}
Under Assumption~\ref{Assumption:gcc-nonsmooth} and let Eq.~\eqref{def:noisy-model-RGDA} and Eq.~\eqref{def:noisy-bound-RGDA} hold with $\sigma > 0$ and let $\eta_t > 0$ satisfies that $\eta_t = \tfrac{1}{2}\sqrt{\tfrac{D_0}{\overline{\xi}_0(L^2 + \sigma^2)T}}$. There exists some $T > 0$ such that the output of Algorithm~\ref{alg:SRGDA} satisfies that $\EE[f(\bar{x}_T, y^\star) - f(x^\star, \bar{y}_T)] \leq \epsilon$ and the total number of noisy Riemannian gradient evaluations is bounded by
\begin{equation*}
O\left(\frac{\overline{\xi}_0(L^2 + \sigma^2)D_0}{\epsilon^2}\right), 
\end{equation*}
where $\overline{\xi}_0 = \overline{\xi}(\kappa_{\min}, D)$ measures the lower bound for the change of non-flatness in $\MCal$ and $\NCal$, and the time-average iterates $(\bar{x}_T, \bar{y}_T) \in \MCal \times \NCal$ can be computed by $(\bar{x}_0, \bar{y}_0) = (0, 0)$ and the inductive formula: $\bar{x}_{t+1} = \Exp_{\bar{x}_t}(\tfrac{1}{t+1} \cdot \Exp_{\bar{x}_t}^{-1}(x_t))$ and $\bar{y}_{t+1} = \Exp_{\bar{y}_t}(\tfrac{1}{t+1} \cdot \Exp_{\bar{y}_t}^{-1}(y_t))$ for all $t = 0, 1, \ldots, T-1$. 
\end{theorem}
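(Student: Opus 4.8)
The plan is to follow the single-loop template for stochastic gradient descent ascent, replacing the Euclidean ``square expansion'' of Eq.~\eqref{eq:square-expansion} by the trigonometric comparison inequality of Proposition~\ref{Prop:key-inequality-LB} and handling the noise by a conditional-expectation argument. Fix a global saddle point $(x^\star, y^\star)$ guaranteed under Assumption~\ref{Assumption:gcc-nonsmooth}. For each $t$, apply Proposition~\ref{Prop:key-inequality-LB} to the geodesic triangle with vertices $x_t$, $x_{t+1}$, $x^\star$ and angle at $x_t$: since $\Exp_{x_t}^{-1}(x_{t+1}) = -\eta_t g_x^t$, the sides are $b = \eta_t\|g_x^t\|$, $c = d_\MCal(x_t, x^\star) \le D$, and $-2bc\cos A = 2\eta_t\langle g_x^t, \Exp_{x_t}^{-1}(x^\star)\rangle_{x_t}$; using that $\overline{\xi}(\kappa_{\min},c)$ is nondecreasing in $c$ to bound it by $\overline{\xi}_0 = \overline{\xi}(\kappa_{\min}, D)$, this gives
\[
d_\MCal^2(x_{t+1}, x^\star) \le d_\MCal^2(x_t, x^\star) + 2\eta_t\langle g_x^t, \Exp_{x_t}^{-1}(x^\star)\rangle_{x_t} + \overline{\xi}_0\,\eta_t^2\|g_x^t\|^2,
\]
and symmetrically for the $y$-update (where $\Exp_{y_t}^{-1}(y_{t+1}) = \eta_t g_y^t$). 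The key structural point is that the distortion $\overline{\xi}_0$ attaches only to the ``step'' term $\eta_t^2\|g_x^t\|^2$, while the coefficient of the telescoping term $d_\MCal^2(x_t,x^\star)$ is exactly one.

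Next I would inject geodesic convexity--concavity. Writing $g_x^t = \subg_x f(x_t,y_t) + \xi_x^t$ and invoking Definition~\ref{def:SCSC} with $\mu = 0$ gives $\langle \subg_x f(x_t,y_t), \Exp_{x_t}^{-1}(x^\star)\rangle_{x_t} \le f(x^\star, y_t) - f(x_t, y_t)$ and $\langle \subg_y f(x_t,y_t), \Exp_{y_t}^{-1}(y^\star)\rangle_{y_t} \ge f(x_t, y^\star) - f(x_t, y_t)$. Substituting into the two distance recursions, adding them, and summing over $t = 0, \dots, T-1$ with constant stepsize $\eta$, the distance terms telescope to at most $D_0$:
\[
2\eta\sum_{t=0}^{T-1}\big(f(x_t, y^\star) - f(x^\star, y_t)\big) \le D_0 + \overline{\xi}_0\,\eta^2\sum_{t=0}^{T-1}\big(\|g_x^t\|^2 + \|g_y^t\|^2\big) + 2\eta\sum_{t=0}^{T-1}\big(\langle \xi_x^t, \Exp_{x_t}^{-1}(x^\star)\rangle_{x_t} - \langle \xi_y^t, \Exp_{y_t}^{-1}(y^\star)\rangle_{y_t}\big).
\]
Taking expectations and using Eq.~\eqref{def:noisy-model-RGDA}--\eqref{def:noisy-bound-RGDA}: the cross terms vanish since $\xi_x^t,\xi_y^t$ are zero-mean conditioned on the history that determines $x_t, y_t$; and $\EE[\|g_x^t\|^2 + \|g_y^t\|^2] = \|\subg_x f(x_t,y_t)\|^2 + \|\subg_y f(x_t,y_t)\|^2 + \EE[\|\xi_x^t\|^2 + \|\xi_y^t\|^2] \le 2L^2 + \sigma^2$, where $\|\subg_x f\|, \|\subg_y f\| \le L$ because geodesic $L$-Lipschitzness in the $\ell_1$-type metric $d_\MCal + d_\NCal$ makes each coordinate slice $L$-Lipschitz.

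Finally I would convert the running gap into the gap of the geodesic average by a Jensen-type step: since $f(\cdot, y^\star)$ is geodesically convex, the recursion $\bar x_{t+1} = \Exp_{\bar x_t}(\tfrac{1}{t+1}\Exp_{\bar x_t}^{-1}(x_t))$ yields, by induction on $t$ via the one-step bound $f(\Exp_{\bar x_t}(\lambda\,\Exp_{\bar x_t}^{-1}(x_t)), y^\star) \le (1-\lambda)f(\bar x_t, y^\star) + \lambda f(x_t, y^\star)$ with $\lambda = \tfrac{1}{t+1}$, the estimate $f(\bar x_T, y^\star) \le \tfrac1T\sum_{t}f(x_t, y^\star)$; symmetrically $f(x^\star, \bar y_T) \ge \tfrac1T\sum_{t}f(x^\star, y_t)$. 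Dividing the displayed inequality by $2\eta T$ then gives $\EE[f(\bar x_T, y^\star) - f(x^\star, \bar y_T)] \le \tfrac{D_0}{2\eta T} + \tfrac{\overline{\xi}_0\eta(2L^2+\sigma^2)}{2}$; the choice $\eta = \tfrac12\sqrt{D_0/(\overline{\xi}_0(L^2+\sigma^2)T)}$ from the statement balances the two terms up to constants, producing an $O(\sqrt{\overline{\xi}_0(L^2+\sigma^2)D_0/T})$ bound, and setting this $\le \epsilon$ yields $T = O(\overline{\xi}_0(L^2+\sigma^2)D_0/\epsilon^2)$. This is precisely the proof of Theorem~\ref{Thm:RGDA-CC} with $\sigma = 0$ plus the stochastic bookkeeping.

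I expect the main obstacle to be making the curvature bookkeeping airtight: applying Proposition~\ref{Prop:key-inequality-LB} with the correct assignment of sides so the distortion attaches only to $\eta_t^2\|g_x^t\|^2$ and not to the telescoping distances, and replacing $c = d_\MCal(x_t,x^\star)$ uniformly by $D$ via monotonicity of $\overline{\xi}$; relatedly, one must ensure $\bar x_T, \bar y_T$ stay in the finite-value domain so the geodesic Jensen step is legitimate. Once the deterministic skeleton is fixed, the conditional-independence and variance arguments for the noise are routine.
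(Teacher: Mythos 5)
Your proposal is correct and follows essentially the same route as the paper: the one-step distance recursion from Proposition~\ref{Prop:key-inequality-LB} with the distortion $\overline{\xi}_0$ attached only to the squared step length (this is exactly Lemma~\ref{Lemma:SRGDA-key-inequality} with $\mu=0$), telescoping, zero-mean cross terms and the $L^2+\sigma^2$ second-moment bound, the geodesic Jensen step for the averaged iterates (which the paper imports as Lemma~C.2 of \citet{Zhang-2022-Minimax} rather than re-deriving by induction), and the stepsize balancing. Your second-moment bound $2L^2+\sigma^2$ is in fact marginally tighter than the paper's $2(2L^2+\|\xi_x^t\|^2+\|\xi_y^t\|^2)$ obtained via $(L+\|\xi\|)^2\le 2L^2+2\|\xi\|^2$, but both yield the same $O(\sqrt{\overline{\xi}_0(L^2+\sigma^2)D_0/T})$ rate.
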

\begin{remark}
Theorem~\ref{Thm:SRGDA-SCSC} and~\ref{Thm:SRGDA-CC} establish the last-iterate and time-average rates of convergence of Algorithm~\ref{alg:SRGDA} for solving Riemannian min-max optimization problems under Assumption~\ref{Assumption:gscsc-nonsmooth} and~\ref{Assumption:gcc-nonsmooth}. Moreover, the dependence on $L$ and $1/\epsilon$ can not be improved since it has matched the lower bound established for nonsmooth stochastic min-max optimization problems in Euclidean spaces. 
\end{remark}

\section{Missing Proofs for Riemannian Corrected Extragradient Method}
In this section, we present some technical lemmas for analyzing the convergence property of Algorithm~\ref{alg:RCEG} and~\ref{alg:SRCEG}. We also give the proofs of Theorem~\ref{Thm:RCEG-SCSC},~\ref{Thm:SRCEG-SCSC} and~\ref{Thm:SRCEG-CC}. 

\subsection{Technical lemmas}
We provide two technical lemmas for analyzing Algorithm~\ref{alg:RCEG} and~\ref{alg:SRCEG} respectively.  Parts of the first lemma were presented in~\citet[Lemma~C.1]{Zhang-2022-Minimax}. For the completeness, we provide the proof details. 
\begin{lemma}\label{Lemma:RCEG-key-inequality}
Under Assumption~\ref{Assumption:gscsc-smooth} and let $\{(x_t, y_t), (\hat{x}_t, \hat{y}_t)\}_{t=0}^{T-1}$ be generated by Algorithm~\ref{alg:RCEG} with the stepsize $\eta > 0$. Then, we have
\begin{eqnarray*}
\lefteqn{0 \leq \tfrac{1}{2}\left((d_\MCal(x_t, x^\star))^2 - (d_\MCal(x_{t+1}, x^\star))^2 + (d_\NCal(y_t, y^\star))^2 - (d_\NCal(y_{t+1}, y^\star))^2\right)} \\
& & + 2\overline{\xi}_0\eta^2\ell^2((d_\MCal(\hat{x}_t, x_t))^2 + (d_\NCal(\hat{y}_t, y_t))^2 - \tfrac{1}{2}\underline{\xi}_0\left((d_\MCal(\hat{x}_t, x_t))^2 + (d_\NCal(\hat{y}_t, y_t))^2\right) \\ 
& & - \tfrac{\mu \eta}{2}\left((d_\MCal(\hat{x}_t, x^\star))^2 + (d_\NCal(\hat{y}_t, y^\star))^2\right). 
\end{eqnarray*}
where $(x^\star, y^\star) \in \MCal \times \NCal$ is a global saddle point of $f$. 
\end{lemma}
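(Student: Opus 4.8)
The plan is to transplant the textbook Euclidean analysis of the extragradient method to the manifold setting, replacing the two Euclidean ``square expansions'' of $\|x_t-x^\star\|^2$ and $\|x_{t+1}-x^\star\|^2$ by the two trigonometric comparison inequalities of Proposition~\ref{Prop:key-inequality}, and exploiting that the RCEG correction term is tailored so that the displacement $d_\MCal(\hat{x}_t,x_{t+1})$ is exactly a gradient difference transported to $T_{\hat{x}_t}\MCal$.

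First I would pass to the tangent spaces $T_{\hat{x}_t}\MCal$ and $T_{\hat{y}_t}\NCal$. Setting $u_1=\Exp_{\hat{x}_t}^{-1}(x_t)$, $u_2=\Exp_{\hat{x}_t}^{-1}(x_{t+1})$, $u_3=\Exp_{\hat{x}_t}^{-1}(x^\star)$, and analogously $v_1,v_2,v_3$ at $\hat{y}_t$, the update of Algorithm~\ref{alg:RCEG} reads $u_2=u_1-\eta\hat{g}_x^t$ and $v_2=v_1+\eta\hat{g}_y^t$ with $\hat{g}_x^t=\grad_x f(\hat{x}_t,\hat{y}_t)$, $\hat{g}_y^t=\grad_y f(\hat{x}_t,\hat{y}_t)$. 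Since $\hat{x}_t=\Exp_{x_t}(-\eta\grad_x f(x_t,y_t))$, reversing the connecting geodesic gives $u_1=\eta\,\Gamma_{x_t}^{\hat{x}_t}\grad_x f(x_t,y_t)$, hence $d_\MCal(\hat{x}_t,x_{t+1})=\|u_2\|=\eta\|\grad_x f(\hat{x}_t,\hat{y}_t)-\Gamma_{x_t}^{\hat{x}_t}\grad_x f(x_t,y_t)\|\le\eta\ell\bigl(d_\MCal(\hat{x}_t,x_t)+d_\NCal(\hat{y}_t,y_t)\bigr)$ by geodesic smoothness (Definition~\ref{def:smooth}); the same bound holds for $d_\NCal(\hat{y}_t,y_{t+1})=\|v_2\|$. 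Squaring and adding, $\|u_2\|^2+\|v_2\|^2\le 4\eta^2\ell^2\bigl(d_\MCal(\hat{x}_t,x_t)^2+d_\NCal(\hat{y}_t,y_t)^2\bigr)$.

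Next I would apply the two parts of Proposition~\ref{Prop:key-inequality}, in each case to a geodesic triangle with apex $\hat{x}_t$ as the angle vertex and with the side from $\hat{x}_t$ to $x^\star$ playing the role of the side $c$ (whose length is at most $D$). For $(\hat{x}_t,x_t,x^\star)$, part~(1) together with $\underline{\xi}(\kappa_{\max},c)\ge\underline{\xi}(\kappa_{\max},D)=\underline{\xi}_0$ gives
\[
d_\MCal(x_t,x^\star)^2 \ge \underline{\xi}_0\, d_\MCal(\hat{x}_t,x_t)^2 + d_\MCal(\hat{x}_t,x^\star)^2 - 2\langle u_1,u_3\rangle;
\]
for $(\hat{x}_t,x_{t+1},x^\star)$, part~(2) together with $\overline{\xi}(\kappa_{\min},c)\le\overline{\xi}(\kappa_{\min},D)=\overline{\xi}_0$ gives
\[
d_\MCal(x_{t+1},x^\star)^2 \le \overline{\xi}_0\, d_\MCal(\hat{x}_t,x_{t+1})^2 + d_\MCal(\hat{x}_t,x^\star)^2 - 2\langle u_2,u_3\rangle.
\]
Subtracting cancels the $d_\MCal(\hat{x}_t,x^\star)^2$ terms and, since $\langle u_2-u_1,u_3\rangle=-\eta\langle\hat{g}_x^t,u_3\rangle$,
\[
d_\MCal(x_t,x^\star)^2-d_\MCal(x_{t+1},x^\star)^2\ge\underline{\xi}_0\, d_\MCal(\hat{x}_t,x_t)^2-\overline{\xi}_0\|u_2\|^2-2\eta\langle\hat{g}_x^t,u_3\rangle,
\]
and the mirror computation at $\hat{y}_t$ yields the analogue with $+2\eta\langle\hat{g}_y^t,v_3\rangle$ in place of the last term.

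Finally I would close the argument using geodesic strong convexity--concavity (Definition~\ref{def:SCSC}, with $\subg=\grad$ since $f$ is differentiable): $-\langle\hat{g}_x^t,u_3\rangle\ge f(\hat{x}_t,\hat{y}_t)-f(x^\star,\hat{y}_t)+\tfrac{\mu}{2}d_\MCal(\hat{x}_t,x^\star)^2$ and $\langle\hat{g}_y^t,v_3\rangle\ge f(\hat{x}_t,y^\star)-f(\hat{x}_t,\hat{y}_t)+\tfrac{\mu}{2}d_\NCal(\hat{y}_t,y^\star)^2$; adding these and using the saddle-point inequality $f(\hat{x}_t,y^\star)\ge f(x^\star,y^\star)\ge f(x^\star,\hat{y}_t)$ gives $-\langle\hat{g}_x^t,u_3\rangle+\langle\hat{g}_y^t,v_3\rangle\ge\tfrac{\mu}{2}\bigl(d_\MCal(\hat{x}_t,x^\star)^2+d_\NCal(\hat{y}_t,y^\star)^2\bigr)$. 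Summing the two telescoping inequalities of the previous step, inserting this bound and $\overline{\xi}_0(\|u_2\|^2+\|v_2\|^2)\le 4\overline{\xi}_0\eta^2\ell^2(d_\MCal(\hat{x}_t,x_t)^2+d_\NCal(\hat{y}_t,y_t)^2)$, and dividing by $2$ produces exactly the claimed estimate. The only delicate point is the bookkeeping in the third step: the side assignments in Proposition~\ref{Prop:key-inequality} must be arranged so that $\underline{\xi}_0$ multiplies $d_\MCal(\hat{x}_t,x_t)^2$ (the term that must survive with a favorable sign), $\overline{\xi}_0$ multiplies $d_\MCal(\hat{x}_t,x_{t+1})^2$ (which is then absorbed into the $\eta^2\ell^2$ error), and the two copies of $d_\MCal(\hat{x}_t,x^\star)^2$ cancel; the monotonicity of $\underline{\xi}$ and $\overline{\xi}$ in the side length $c$, together with the diameter bound $D$, is precisely what allows the curvature factors to be replaced by the uniform constants $\underline{\xi}_0,\overline{\xi}_0$, after which the rest is the routine Euclidean EG computation carried over verbatim.
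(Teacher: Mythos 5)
Your proposal is correct and follows essentially the same route as the paper's proof: the same two trigonometric comparison inequalities applied at apexes $\hat{x}_t,\hat{y}_t$ with the side to the saddle point playing the role of $c$, the same smoothness bound $d_\MCal(\hat{x}_t,x_{t+1})=\eta\|\grad_x f(\hat{x}_t,\hat{y}_t)-\Gamma_{x_t}^{\hat{x}_t}\grad_x f(x_t,y_t)\|\le\eta\ell(d_\MCal(\hat{x}_t,x_t)+d_\NCal(\hat{y}_t,y_t))$, and the same use of geodesic strong convexity--concavity plus the saddle-point inequality; you merely assemble the pieces in the opposite order (distance telescoping first, gap bound last, versus the paper's gap-first presentation). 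The bookkeeping, including the cancellation of the two $d_\MCal(\hat{x}_t,x^\star)^2$ terms and the factor $4\overline{\xi}_0\eta^2\ell^2$ becoming $2\overline{\xi}_0\eta^2\ell^2$ after dividing by two, checks out.
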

\begin{proof}
Since $f$ is geodesically $\ell$-smooth, we have the Riemannian gradients of $f$, i.e., $(\grad_x f, \grad_y f)$, are well defined. Since $f$ is geodesically strongly-concave-strongly-concave with the modulus $\mu \geq 0$ (here $\mu = 0$ means that $f$ is geodesically concave-concave), we have
\begin{eqnarray*}
\lefteqn{f(\hat{x}_t, y^\star) - f(x^\star, \hat{y}_t) =  f(\hat{x}_t, \hat{y}_t) - f(x^\star, \hat{y}_t) - (f(\hat{x}_t, \hat{y}_t) - f(\hat{x}_t, y^\star))}  \\
& \overset{\textnormal{Definition~\ref{def:SCSC}}}{\leq} & - \langle \grad_x f(\hat{x}_t,\hat{y}_t), \Exp_{\hat{x}_t}^{-1}(x^\star) \rangle + \langle \grad_y f(\hat{x}_t,\hat{y}_t), \Exp_{\hat{y}_t}^{-1}(y^\star)\rangle - \tfrac{\mu}{2}(d_\MCal(\hat{x}_t, x^\star))^2 - \tfrac{\mu}{2}(d_\NCal(\hat{y}_t, y^\star))^2.
\end{eqnarray*}
Since $(x^\star, y^\star) \in \MCal \times \NCal$ is a global saddle point of $f$, we have $f(\hat{x}_t, y^\star) - f(x^\star, \hat{y}_t) \geq 0$. Recalling also from the scheme of Algorithm~\ref{alg:RCEG} that we have
\begin{eqnarray*}
x_{t+1} & \leftarrow & \Exp_{\hat{x}_t}(-\eta \cdot \grad_x f(\hat{x}_t,\hat{y}_t) + \Exp_{\hat{x}_t}^{-1}(x_t)), \\
y_{t+1} & \leftarrow & \Exp_{\hat{y}_t}(\eta \cdot \grad_y f(\hat{x}_t,\hat{y}_t) + \Exp_{\hat{y}_t}^{-1}(y_t)). 
\end{eqnarray*}
By the definition of an exponential map, we have
\begin{equation}\label{inequality:RCEG-opt}
\begin{array}{lcl}
\Exp_{\hat{x}_t}^{-1}(x_{t+1}) & = & -\eta \cdot \grad_x f(\hat{x}_t,\hat{y}_t) + \Exp_{\hat{x}_t}^{-1}(x_t),  \\
\Exp_{\hat{y}_t}^{-1}(y_{t+1}) & = & \eta \cdot \grad_y f(\hat{x}_t,\hat{y}_t) + \Exp_{\hat{y}_t}^{-1}(y_t). 
\end{array}
\end{equation}
This implies that 
\begin{eqnarray*}
- \langle \grad_x f(\hat{x}_t,\hat{y}_t), \Exp_{\hat{x}_t}^{-1}(x^\star) \rangle & = & \tfrac{1}{\eta} (\langle \Exp_{\hat{x}_t}^{-1}(x_{t+1}), \Exp_{\hat{x}_t}^{-1}(x^\star)\rangle - \langle \Exp_{\hat{x}_t}^{-1}(x_t), \Exp_{\hat{x}_t}^{-1}(x^\star) \rangle), \\
\langle \grad_y f(\hat{x}_t,\hat{y}_t), \Exp_{\hat{y}_t}^{-1}(y^\star) \rangle & = & \tfrac{1}{\eta} (\langle \Exp_{\hat{y}_t}^{-1}(y_{t+1}), \Exp_{\hat{y}_t}^{-1}(y^\star)\rangle - \langle \Exp_{\hat{y}_t}^{-1}(y_t), \Exp_{\hat{y}_t}^{-1}(y^\star)\rangle). 
\end{eqnarray*}
Putting these pieces together yields that 
\begin{eqnarray*}
\lefteqn{0 \leq \tfrac{1}{\eta} (\langle \Exp_{\hat{x}_t}^{-1}(x_{t+1}), \Exp_{\hat{x}_t}^{-1}(x^\star)\rangle - \langle \Exp_{\hat{x}_t}^{-1}(x_t), \Exp_{\hat{x}_t}^{-1}(x^\star) \rangle) - \tfrac{\mu}{2}(d_\MCal(\hat{x}_t, x^\star))^2} \\
& & + \tfrac{1}{\eta} (\langle \Exp_{\hat{y}_t}^{-1}(y_{t+1}), \Exp_{\hat{y}_t}^{-1}(y^\star)\rangle - \langle \Exp_{\hat{y}_t}^{-1}(y_t), \Exp_{\hat{y}_t}^{-1}(y^\star)\rangle) - \tfrac{\mu}{2}(d_\NCal(\hat{y}_t, y^\star))^2. 
\end{eqnarray*}
Equivalently, we have
\begin{eqnarray}\label{inequality:RCEG-key-first}
\lefteqn{0 \leq \langle \Exp_{\hat{x}_t}^{-1}(x_{t+1}), \Exp_{\hat{x}_t}^{-1}(x^\star)\rangle - \langle \Exp_{\hat{x}_t}^{-1}(x_t), \Exp_{\hat{x}_t}^{-1}(x^\star) \rangle - \tfrac{\mu \eta}{2}(d_\MCal(\hat{x}_t, x^\star))^2} \\
& & + \langle \Exp_{\hat{y}_t}^{-1}(y_{t+1}), \Exp_{\hat{y}_t}^{-1}(y^\star)\rangle - \langle \Exp_{\hat{y}_t}^{-1}(y_t), \Exp_{\hat{y}_t}^{-1}(y^\star)\rangle - \tfrac{\mu\eta}{2}(d_\NCal(\hat{y}_t, y^\star))^2. \nonumber
\end{eqnarray}
It suffices to bound the terms in the right-hand side of Eq.~\eqref{inequality:RCEG-key-first} by leveraging the celebrated comparison inequalities on Riemannian manifold with bounded sectional curvature (see Proposition~\ref{Prop:key-inequality-UB} and~\ref{Prop:key-inequality-LB}). More specifically, we define the constants using $\overline{\xi}(\cdot, \cdot)$ and $\underline{\xi}(\cdot, \cdot)$ from Proposition~\ref{Prop:key-inequality-UB} and~\ref{Prop:key-inequality-LB} as follows, 
\begin{equation*}
\overline{\xi}_0 = \overline{\xi}(\kappa_{\min}, D), \qquad \underline{\xi}_0 = \underline{\xi}(\kappa_{\max}, D). 
\end{equation*}
By Proposition~\ref{Prop:key-inequality-UB} and using that $\max\{d_\MCal(\hat{x}_t, x^\star), d_\NCal(\hat{y}_t, y^\star)\} \leq D$, we have
\begin{equation}\label{inequality:RCEG-key-second}
\begin{array}{lll}
- \langle \Exp_{\hat{x}_t}^{-1}(x_t), \Exp_{\hat{x}_t}^{-1}(x^\star) \rangle & \leq & - \tfrac{1}{2}\left(\underline{\xi}_0(d_\MCal(\hat{x}_t, x_t))^2 + (d_\MCal(\hat{x}_t, x^\star))^2 - (d_\MCal(x_t, x^\star))^2\right), \\
- \langle \Exp_{\hat{y}_t}^{-1}(y_t), \Exp_{\hat{y}_t}^{-1}(y^\star) \rangle & \leq & - \tfrac{1}{2}\left(\underline{\xi}_0(d_\NCal(\hat{y}_t, y_t))^2 + (d_\NCal(\hat{y}_t, y^\star))^2 - (d_\NCal(y_t, y^\star))^2\right). 
\end{array}
\end{equation}
By Proposition~\ref{Prop:key-inequality-LB} and using that $\max\{d_\MCal(\hat{x}_t, x^\star), d_\NCal(\hat{y}_t, y^\star)\} \leq D$, we have
\begin{equation*}
\langle \Exp_{\hat{x}_t}^{-1}(x_{t+1}), \Exp_{\hat{x}_t}^{-1}(x^\star) \rangle \leq \tfrac{1}{2}\left(\overline{\xi}_0(d_\MCal(\hat{x}_t, x_{t+1}))^2 + (d_\MCal(\hat{x}_t, x^\star))^2 - (d_\MCal(x_{t+1}, x^\star))^2\right). 
\end{equation*}
and
\begin{equation*}
\langle \Exp_{\hat{y}_t}^{-1}(y_{t+1}), \Exp_{\hat{y}_t}^{-1}(y^\star) \rangle \leq \tfrac{1}{2}\left(\overline{\xi}_0(d_\NCal(\hat{y}_t, y_{t+1}))^2 + (d_\NCal(\hat{y}_t, y^\star))^2 - (d_\NCal(y_{t+1}, y^\star))^2\right). 
\end{equation*}
By the definition of an exponential map and Riemannian metric, we have
\begin{equation}\label{inequality:RCEG-key-third}
\begin{array}{lll}
d_\MCal(\hat{x}_t, x_{t+1}) & = & \|\Exp_{\hat{x}_t}^{-1}(x_{t+1})\| \overset{\textnormal{Eq.~\eqref{inequality:RCEG-opt}}}{=} \|\eta \cdot \grad_x f(\hat{x}_t,\hat{y}_t) - \Exp_{\hat{x}_t}^{-1}(x_t)\|, \\
d_\NCal(\hat{y}_t, y_{t+1}) & = & \|\Exp_{\hat{y}_t}^{-1}(y_{t+1})\| \overset{\textnormal{Eq.~\eqref{inequality:RCEG-opt}}}{=} \|\eta \cdot \grad_y f(\hat{x}_t,\hat{y}_t) + \Exp_{\hat{y}_t}^{-1}(y_t)\|. 
\end{array}
\end{equation}
Further, we see from the scheme of Algorithm~\ref{alg:RCEG} that we have
\begin{eqnarray*}
\hat{x}_t & \leftarrow & \Exp_{x_t}(-\eta \cdot \grad_x f(x_t, y_t)), \\
\hat{y}_t & \leftarrow & \Exp_{y_t}(\eta \cdot \grad_y f(x_t, y_t)). 
\end{eqnarray*}
By the definition of an exponential map, we have
\begin{equation*}
\Exp_{x_t}^{-1}(\hat{x}_t) = -\eta \cdot \grad_x f(x_t, y_t),  \qquad \Exp_{y_t}^{-1}(\hat{y}_t) = \eta \cdot \grad_y f(x_t, y_t). 
\end{equation*}
Using the definition of a parallel transport map and the above equations, we have
\begin{equation*}
\Exp_{\hat{x}_t}^{-1}(x_t) = \eta \cdot \Gamma_{x_t}^{\hat{x}_t}\grad_x f(x_t, y_t), \qquad \Exp_{\hat{y}_t}^{-1}(y_t) = - \eta \cdot \Gamma_{y_t}^{\hat{y}_t}\grad_y f(x_t, y_t)
\end{equation*}
Since $f$ is geodesically $\ell$-smooth, we have
\begin{equation*}
\begin{array}{lll}
\|\grad_x f(\hat{x}_t,\hat{y}_t) - \Gamma_{x_t}^{\hat{x}_t}\grad_x f(x_t, y_t)\| & \leq & \ell(d_\MCal(\hat{x}_t, x_t) + d_\NCal(\hat{y}_t, y_t)), \\
\|\grad_y f(\hat{x}_t,\hat{y}_t) - \Gamma_{y_t}^{\hat{y}_t}\grad_y f(x_t, y_t)\| & \leq & \ell(d_\MCal(\hat{x}_t, x_t) + d_\NCal(\hat{y}_t, y_t)). 
\end{array}
\end{equation*}
Plugging the above inequalities into Eq.~\eqref{inequality:RCEG-key-third} yields that 
\begin{equation*}
\max\left\{d_\MCal(\hat{x}_t, x_{t+1}), d_\NCal(\hat{y}_t, y_{t+1})\right\} \leq \eta\ell(d_\MCal(\hat{x}_t, x_t) + d_\NCal(\hat{y}_t, y_t)). 
\end{equation*}
Therefore, we have 
\begin{equation*}
\begin{array}{lll}
\langle \Exp_{\hat{x}_t}^{-1}(x_{t+1}), \Exp_{\hat{x}_t}^{-1}(x^\star) \rangle & \leq & \tfrac{1}{2}\left(2\overline{\xi}_0\eta^2\ell^2((d_\MCal(\hat{x}_t, x_t))^2 + (d_\NCal(\hat{y}_t, y_t))^2) + (d_\MCal(\hat{x}_t, x^\star))^2 - (d_\MCal(x_{t+1}, x^\star))^2\right), \\
\langle \Exp_{\hat{y}_t}^{-1}(y_{t+1}), \Exp_{\hat{y}_t}^{-1}(y^\star) \rangle & \leq & \tfrac{1}{2}\left(2\overline{\xi}_0\eta^2\ell^2((d_\MCal(\hat{x}_t, x_t))^2 + (d_\NCal(\hat{y}_t, y_t))^2) + (d_\NCal(\hat{y}_t, y^\star))^2 - (d_\NCal(y_{t+1}, y^\star))^2\right). 
\end{array}
\end{equation*}
Plugging the above inequalities and Eq.~\eqref{inequality:RCEG-key-second} into Eq.~\eqref{inequality:RCEG-key-first} yields the desired inequality. 
\end{proof}
The second lemma gives another key inequality that is satisfied by the iterates generated by Algorithm~\ref{alg:SRCEG}. 
\begin{lemma}\label{Lemma:SRCEG-key-inequality}
Under Assumption~\ref{Assumption:gscsc-smooth} (or Assumption~\ref{Assumption:gcc-smooth}) and the noisy model (cf. Eq.~\eqref{def:noisy-model} and~\eqref{def:noisy-bound}) and let $\{(x_t, y_t), (\hat{x}_t, \hat{y}_t)\}_{t=0}^{T-1}$ be generated by Algorithm~\ref{alg:SRCEG} with the stepsize $\eta > 0$. Then, we have
\begin{eqnarray*}
\lefteqn{\EE[f(\hat{x}_t, y^\star) - f(x^\star, \hat{y}_t)] \leq \tfrac{1}{2\eta}\EE\left[(d_\MCal(x_t, x^\star))^2 - (d_\MCal(x_{t+1}, x^\star))^2 + (d_\NCal(y_t, y^\star))^2 - (d_\NCal(y_{t+1}, y^\star))^2\right]} \\
& & + 6\overline{\xi}_0\eta\ell^2\EE\left[(d_\MCal(\hat{x}_t, x_t))^2 + (d_\NCal(\hat{y}_t, y_t))^2\right] - \tfrac{1}{2\eta}\underline{\xi}_0\EE\left[(d_\MCal(\hat{x}_t, x_t))^2 + (d_\NCal(\hat{y}_t, y_t))^2\right] \\
& & - \tfrac{\mu}{2}\EE\left[(d_\MCal(\hat{x}_t, x^\star))^2 + (d_\NCal(\hat{y}_t, y^\star))^2\right] + 3\overline{\xi}_0\eta\sigma^2, 
\end{eqnarray*}
where $(x^\star, y^\star) \in \MCal \times \NCal$ is a global saddle point of $f$. 
\end{lemma}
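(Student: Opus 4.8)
The plan is to follow the proof of Lemma~\ref{Lemma:RCEG-key-inequality} line by line, carrying the noise terms through and discarding them only at the very end by taking a total expectation. First I would invoke geodesic strong convexity--strong concavity (Definition~\ref{def:SCSC}) with the \emph{noiseless} Riemannian gradients at $(\hat{x}_t,\hat{y}_t)$ to obtain
\[
f(\hat{x}_t, y^\star) - f(x^\star, \hat{y}_t) \leq -\langle \grad_x f(\hat{x}_t,\hat{y}_t), \Exp_{\hat{x}_t}^{-1}(x^\star)\rangle + \langle \grad_y f(\hat{x}_t,\hat{y}_t), \Exp_{\hat{y}_t}^{-1}(y^\star)\rangle - \tfrac{\mu}{2}\big((d_\MCal(\hat{x}_t,x^\star))^2 + (d_\NCal(\hat{y}_t,y^\star))^2\big),
\]
with $\mu=0$ under Assumption~\ref{Assumption:gcc-smooth}. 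Then I would substitute $\grad_x f(\hat{x}_t,\hat{y}_t) = \hat{g}_x^t - \hat{\xi}_x^t$ (and the analogue in $y$) and use the correction step of Algorithm~\ref{alg:SRCEG}, i.e. $\Exp_{\hat{x}_t}^{-1}(x_{t+1}) = -\eta\hat{g}_x^t + \Exp_{\hat{x}_t}^{-1}(x_t)$ and $\Exp_{\hat{y}_t}^{-1}(y_{t+1}) = \eta\hat{g}_y^t + \Exp_{\hat{y}_t}^{-1}(y_t)$, to rewrite the $\hat g^t$-inner products as $\tfrac1\eta$ times telescoping quantities of the shape $\langle \Exp_{\hat{x}_t}^{-1}(x_{t+1}) - \Exp_{\hat{x}_t}^{-1}(x_t), \Exp_{\hat{x}_t}^{-1}(x^\star)\rangle$. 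This reproduces the skeleton of Eq.~\eqref{inequality:RCEG-key-first}, divided by $\eta$, plus two stray noise inner products $\langle \hat{\xi}_x^t, \Exp_{\hat{x}_t}^{-1}(x^\star)\rangle$ and $-\langle \hat{\xi}_y^t, \Exp_{\hat{y}_t}^{-1}(y^\star)\rangle$.

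Next I would apply the trigonometric comparison inequalities of Propositions~\ref{Prop:key-inequality-UB} and~\ref{Prop:key-inequality-LB} with $c = d_\MCal(\hat{x}_t,x_t)\le D$, $c = d_\MCal(\hat{x}_t,x_{t+1})\le D$ (and the $\NCal$ analogues), giving the constants $\underline{\xi}_0$ and $\overline{\xi}_0$. Exactly as in the deterministic case, the $(d_\MCal(\hat{x}_t,x^\star))^2$ and $(d_\NCal(\hat{y}_t,y^\star))^2$ contributions cancel, leaving a telescoping block $\tfrac{1}{2\eta}\big[(d_\MCal(x_t,x^\star))^2 - (d_\MCal(x_{t+1},x^\star))^2 + (d_\NCal(y_t,y^\star))^2 - (d_\NCal(y_{t+1},y^\star))^2\big]$, a term $-\tfrac{1}{2\eta}\underline{\xi}_0\big((d_\MCal(\hat{x}_t,x_t))^2 + (d_\NCal(\hat{y}_t,y_t))^2\big)$, the $\mu$-term, and a term $+\tfrac{\overline{\xi}_0}{2\eta}\big((d_\MCal(\hat{x}_t,x_{t+1}))^2 + (d_\NCal(\hat{y}_t,y_{t+1}))^2\big)$ that still must be estimated.

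The crux is bounding $(d_\MCal(\hat{x}_t,x_{t+1}))^2 = \|\eta\hat{g}_x^t - \Exp_{\hat{x}_t}^{-1}(x_t)\|^2$. Using the prediction step $\hat{x}_t = \Exp_{x_t}(-\eta g_x^t)$ we have $\Exp_{\hat{x}_t}^{-1}(x_t) = \eta\,\Gamma_{x_t}^{\hat{x}_t} g_x^t$; substituting $g_x^t = \grad_x f(x_t,y_t)+\xi_x^t$ and $\hat{g}_x^t = \grad_x f(\hat{x}_t,\hat{y}_t)+\hat{\xi}_x^t$, splitting via $\|a+b+c\|^2\le 3(\|a\|^2+\|b\|^2+\|c\|^2)$, using geodesic $\ell$-smoothness for $\|\grad_x f(\hat{x}_t,\hat{y}_t) - \Gamma_{x_t}^{\hat{x}_t}\grad_x f(x_t,y_t)\| \le \ell(d_\MCal(\hat{x}_t,x_t)+d_\NCal(\hat{y}_t,y_t))$ together with the fact that $\Gamma$ is an isometry, I would obtain $(d_\MCal(\hat{x}_t,x_{t+1}))^2 \le 6\eta^2\ell^2\big((d_\MCal(\hat{x}_t,x_t))^2 + (d_\NCal(\hat{y}_t,y_t))^2\big) + 3\eta^2\big(\|\hat{\xi}_x^t\|^2 + \|\xi_x^t\|^2\big)$ and likewise on $\NCal$; the constant $6$ (versus $2$ in the deterministic lemma) is precisely the cost of the three-way split. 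Finally I would take a total expectation: the two stray inner products vanish since $\hat{\xi}_x^t,\hat{\xi}_y^t$ are mean-zero and independent of $(\hat{x}_t,\hat{y}_t)$, while Eq.~\eqref{def:noisy-bound} gives $\tfrac{\overline{\xi}_0}{2\eta}\cdot 3\eta^2\,\EE\big[\|\hat{\xi}_x^t\|^2+\|\hat{\xi}_y^t\|^2+\|\xi_x^t\|^2+\|\xi_y^t\|^2\big] \le 3\overline{\xi}_0\eta\sigma^2$ and turns the $6\eta^2\ell^2$ coefficient into $6\overline{\xi}_0\eta\ell^2$ after multiplying by $\tfrac{\overline{\xi}_0}{2\eta}$. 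Reassembling the pieces yields the claimed inequality.

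I expect the main obstacle to be the noise bookkeeping in the last two steps: namely, verifying that the first-stage noise $\xi^t$ (which enters only through $\hat{x}_t,\hat{y}_t$) does not obstruct the vanishing of the \emph{second-stage} cross terms, and carefully propagating the constant $3$ from the three-way Cauchy--Schwarz split so that the final coefficients $6\overline{\xi}_0\eta\ell^2$ and $3\overline{\xi}_0\eta\sigma^2$ come out exactly; everything else is a verbatim transcription of the deterministic argument with $\tfrac12 \mapsto \tfrac{1}{2\eta}$.
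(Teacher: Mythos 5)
Your proposal is correct and follows essentially the same route as the paper's proof: the same strong-convexity/concavity decomposition, the same rewriting of the correction step, the same trigonometric comparison inequalities, the same three-way split $\|a+b+c\|^2\le 3(\|a\|^2+\|b\|^2+\|c\|^2)$ combined with geodesic $\ell$-smoothness, and the same final expectation step killing the noise cross terms. The one bookkeeping point you flagged resolves exactly as you anticipated: $\tfrac{\overline{\xi}_0}{2\eta}\cdot 6\eta^2\ell^2$ gives only $3\overline{\xi}_0\eta\ell^2$ per manifold, but the $\MCal$ and $\NCal$ contributions each carry the full sum $(d_\MCal(\hat{x}_t,x_t))^2+(d_\NCal(\hat{y}_t,y_t))^2$, so adding them yields the stated coefficient $6\overline{\xi}_0\eta\ell^2$.
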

\begin{proof}
Using the same argument, we have ($\mu=0$ refers to geodesically convex-concave case)
\begin{eqnarray*}
\lefteqn{f(\hat{x}_t, y^\star) - f(x^\star, \hat{y}_t) =  f(\hat{x}_t, \hat{y}_t) - f(x^\star, \hat{y}_t) - (f(\hat{x}_t, \hat{y}_t) - f(\hat{x}_t, y^\star))}  \\
& \leq & - \langle \grad_x f(\hat{x}_t,\hat{y}_t), \Exp_{\hat{x}_t}^{-1}(x^\star) \rangle + \langle \grad_y f(\hat{x}_t,\hat{y}_t), \Exp_{\hat{y}_t}^{-1}(y^\star)\rangle - \tfrac{\mu}{2}(d_\MCal(\hat{x}_t, x^\star))^2 - \tfrac{\mu}{2}(d_\NCal(\hat{y}_t, y^\star))^2.
\end{eqnarray*}
Combining the arguments used in Lemma~\ref{Lemma:RCEG-key-inequality} and the scheme of Algorithm~\ref{alg:SRCEG}, we have
\begin{eqnarray*}
- \langle \hat{g}_x^t, \Exp_{\hat{x}_t}^{-1}(x^\star) \rangle & = & \tfrac{1}{\eta} (\langle \Exp_{\hat{x}_t}^{-1}(x_{t+1}), \Exp_{\hat{x}_t}^{-1}(x^\star)\rangle - \langle \Exp_{\hat{x}_t}^{-1}(x_t), \Exp_{\hat{x}_t}^{-1}(x^\star) \rangle), \\
\langle \hat{g}_y^t, \Exp_{\hat{y}_t}^{-1}(y^\star) \rangle & = & \tfrac{1}{\eta} (\langle \Exp_{\hat{y}_t}^{-1}(y_{t+1}), \Exp_{\hat{y}_t}^{-1}(y^\star)\rangle - \langle \Exp_{\hat{y}_t}^{-1}(y_t), \Exp_{\hat{y}_t}^{-1}(y^\star)\rangle). 
\end{eqnarray*}
Putting these pieces together with Eq.~\eqref{def:noisy-model} yields that 
\begin{eqnarray}\label{inequality:SRCEG-key-first}
\lefteqn{f(\hat{x}_t, y^\star) - f(x^\star, \hat{y}_t) \leq \tfrac{1}{\eta} (\langle \Exp_{\hat{x}_t}^{-1}(x_{t+1}), \Exp_{\hat{x}_t}^{-1}(x^\star)\rangle - \langle \Exp_{\hat{x}_t}^{-1}(x_t), \Exp_{\hat{x}_t}^{-1}(x^\star) \rangle)} \\
& & + \tfrac{1}{\eta} (\langle \Exp_{\hat{y}_t}^{-1}(y_{t+1}), \Exp_{\hat{y}_t}^{-1}(y^\star)\rangle - \langle \Exp_{\hat{y}_t}^{-1}(y_t), \Exp_{\hat{y}_t}^{-1}(y^\star)\rangle) - \tfrac{\mu}{2}(d_\MCal(\hat{x}_t, x^\star))^2 - \tfrac{\mu}{2}(d_\NCal(\hat{y}_t, y^\star))^2 \nonumber \\
& & + \langle \hat{\xi}_x^t, \Exp_{\hat{x}_t}^{-1}(x^\star)\rangle - \langle \hat{\xi}_y^t, \Exp_{\hat{y}_t}^{-1}(y^\star)\rangle. \nonumber
\end{eqnarray}
By the same argument as used in Lemma~\ref{Lemma:RCEG-key-inequality}, we have
\begin{equation}\label{inequality:SRCEG-key-second}
\begin{array}{lll}
- \langle \Exp_{\hat{x}_t}^{-1}(x_t), \Exp_{\hat{x}_t}^{-1}(x^\star) \rangle & \leq & - \tfrac{1}{2}\left(\underline{\xi}_0(d_\MCal(\hat{x}_t, x_t))^2 + (d_\MCal(\hat{x}_t, x^\star))^2 - (d_\MCal(x_t, x^\star))^2\right), \\
- \langle \Exp_{\hat{y}_t}^{-1}(y_t), \Exp_{\hat{y}_t}^{-1}(y^\star) \rangle & \leq & - \tfrac{1}{2}\left(\underline{\xi}_0(d_\NCal(\hat{y}_t, y_t))^2 + (d_\NCal(\hat{y}_t, y^\star))^2 - (d_\NCal(y_t, y^\star))^2\right), 
\end{array}
\end{equation}
and 
\begin{equation*}
\begin{array}{lll}
\langle \Exp_{\hat{x}_t}^{-1}(x_{t+1}), \Exp_{\hat{x}_t}^{-1}(x^\star) \rangle & \leq & \tfrac{1}{2}\left(\overline{\xi}_0\eta^2\|\hat{g}_x^t - \Gamma_{x_t}^{\hat{x}_t} g_x^t\|^2 + (d_\MCal(\hat{x}_t, x^\star))^2 - (d_\MCal(x_{t+1}, x^\star))^2\right), \\
\langle \Exp_{\hat{y}_t}^{-1}(y_{t+1}), \Exp_{\hat{y}_t}^{-1}(y^\star) \rangle & \leq & \tfrac{1}{2}\left(\overline{\xi}_0\eta^2\|\hat{g}_y^t - \Gamma_{y_t}^{\hat{y}_t} g_y^t\|^2 + (d_\NCal(\hat{y}_t, y^\star))^2 - (d_\NCal(y_{t+1}, y^\star))^2\right). 
\end{array}
\end{equation*}
Since $f$ is geodesically $\ell$-smooth and Eq.~\eqref{def:noisy-model} holds, we have
\begin{eqnarray*}
\|\hat{g}_x^t - \Gamma_{x_t}^{\hat{x}_t} g_x^t\|^2 & \leq & 3\|\hat{\xi}_x^t\|^2 + 3\|\xi_x^t\|^2 + 6\ell^2(d_\MCal(\hat{x}_t, x_t))^2 + 6\ell^2(d_\NCal(\hat{y}_t, y_t))^2, \\
\|\hat{g}_y^t - \Gamma_{y_t}^{\hat{y}_t} g_y^t\|^2 & \leq & 3\|\hat{\xi}_y^t\|^2 + 3\|\xi_y^t\|^2 + 6\ell^2(d_\MCal(\hat{x}_t, x_t))^2 + 6\ell^2(d_\NCal(\hat{y}_t, y_t))^2. 
\end{eqnarray*}
Therefore, we have 
\begin{eqnarray*}
\lefteqn{\langle \Exp_{\hat{x}_t}^{-1}(x_{t+1}), \Exp_{\hat{x}_t}^{-1}(x^\star) \rangle + \langle \Exp_{\hat{y}_t}^{-1}(y_{t+1}), \Exp_{\hat{y}_t}^{-1}(y^\star) \rangle} \\
& \leq & 6\overline{\xi}_0\eta^2\ell^2((d_\MCal(\hat{x}_t, x_t))^2 + (d_\NCal(\hat{y}_t, y_t))^2) + \tfrac{3}{2}\overline{\xi}_0\eta^2(\|\hat{\xi}_x^t\|^2 + \|\xi_x^t\|^2 + \|\hat{\xi}_y^t\|^2 + \|\xi_y^t\|^2) \\
& & + \tfrac{1}{2}\left((d_\MCal(\hat{x}_t, x^\star))^2 - (d_\MCal(x_{t+1}, x^\star))^2 + (d_\NCal(\hat{y}_t, y^\star))^2 - (d_\NCal(y_{t+1}, y^\star))^2\right). 
\end{eqnarray*}
Plugging the above inequalities and Eq.~\eqref{inequality:SRCEG-key-second} into Eq.~\eqref{inequality:SRCEG-key-first} yields that 
\begin{eqnarray*}
\lefteqn{f(\hat{x}_t, y^\star) - f(x^\star, \hat{y}_t) \leq \tfrac{1}{2\eta}\left((d_\MCal(x_t, x^\star))^2 - (d_\MCal(x_{t+1}, x^\star))^2 + (d_\NCal(y_t, y^\star))^2 - (d_\NCal(y_{t+1}, y^\star))^2\right)} \\
& & + 6\overline{\xi}_0\eta\ell^2((d_\MCal(\hat{x}_t, x_t))^2 + (d_\NCal(\hat{y}_t, y_t))^2) + \tfrac{3}{2}\overline{\xi}_0\eta(\|\hat{\xi}_x^t\|^2 + \|\xi_x^t\|^2 + \|\hat{\xi}_y^t\|^2 + \|\xi_y^t\|^2) \\
& & - \tfrac{1}{2\eta}\underline{\xi}_0\left((d_\MCal(\hat{x}_t, x_t))^2 + (d_\NCal(\hat{y}_t, y_t))^2\right) - \tfrac{\mu}{2}(d_\MCal(\hat{x}_t, x^\star))^2 - \tfrac{\mu}{2}(d_\NCal(\hat{y}_t, y^\star))^2 \\
& & + \langle \hat{\xi}_x^t, \Exp_{\hat{x}_t}^{-1}(x^\star)\rangle - \langle \hat{\xi}_y^t, \Exp_{\hat{y}_t}^{-1}(y^\star)\rangle.
\end{eqnarray*}
Taking the expectation of both sides and using Eq.~\eqref{def:noisy-bound} yields the desired inequality. 
\end{proof}

\subsection{Proof of Theorem~\ref{Thm:RCEG-SCSC}}
Since Riemannian metrics satisfy the triangle inequality, we have
\begin{equation*}
(d_\MCal(\hat{x}_t, x^\star))^2 + (d_\NCal(\hat{y}_t, y^\star))^2 \geq \tfrac{1}{2}((d_\MCal(x_t, x^\star))^2 + (d_\NCal(y_t, y^\star))^2) - (d_\MCal(\hat{x}_t, x_t))^2 + (d_\NCal(\hat{y}_t, y_t))^2.  
\end{equation*}
Plugging the above inequality into the inequality from Lemma~\ref{Lemma:RCEG-key-inequality} yields that 
\begin{eqnarray*}
\lefteqn{(d_\MCal(x_{t+1}, x^\star))^2 + (d_\NCal(y_{t+1}, y^\star))^2} \\
& \leq & \left(1 - \tfrac{\mu\eta}{2}\right)\left((d_\MCal(x_t, x^\star))^2 + (d_\NCal(y_t, y^\star))^2\right) + (4\overline{\xi}_0\eta^2\ell^2 + \mu\eta - \underline{\xi}_0)((d_\MCal(\hat{x}_t, x_t))^2 + (d_\NCal(\hat{y}_t, y_t))^2. 
\end{eqnarray*}
Since $\eta = \min\{\frac{1}{4\ell\sqrt{\tau_0}}, \frac{\underline{\xi}_0}{2\mu}\}$, we have $4\overline{\xi}_0\eta^2\ell^2 + \mu\eta - \underline{\xi}_0 \leq 0$. By the definition, we have $\tau_0 \geq 1$, $\kappa \geq 1$ and $\underline{\xi}_0 \leq 1$. This implies that 
\begin{equation*}
1 - \tfrac{\mu\eta}{2} = 1 - \min\left\{\tfrac{1}{8\kappa\sqrt{\tau_0}}, \tfrac{\underline{\xi}_0}{4}\right\} > 0. 
\end{equation*}         
Putting these pieces together yields that    
\begin{eqnarray*}
(d_\MCal(x_T, x^\star))^2 + (d_\NCal(y_T, y^\star))^2 & \leq & \left(1 - \min\left\{\tfrac{1}{8\kappa\sqrt{\tau_0}}, \tfrac{\underline{\xi}_0}{4}\right\}\right)^T(d_\MCal(x_0, x^\star))^2 + (d_\NCal(y_0, y^\star))^2 \\
& \leq & \left(1 - \min\left\{\tfrac{1}{8\kappa\sqrt{\tau_0}}, \tfrac{\underline{\xi}_0}{4}\right\}\right)^T D_0. 
\end{eqnarray*}
This completes the proof. 

\subsection{Proof of Theorem~\ref{Thm:SRCEG-SCSC}}
Since Riemannian metrics satisfy the triangle inequality, we have
\begin{equation*}
(d_\MCal(\hat{x}_t, x^\star))^2 + (d_\NCal(\hat{y}_t, y^\star))^2 \geq \tfrac{1}{2}((d_\MCal(x_t, x^\star))^2 + (d_\NCal(y_t, y^\star))^2) - (d_\MCal(\hat{x}_t, x_t))^2 + (d_\NCal(\hat{y}_t, y_t))^2.  
\end{equation*}
Plugging the above inequality into the inequality from Lemma~\ref{Lemma:SRCEG-key-inequality} yields that 
\begin{eqnarray*}
\lefteqn{\EE[f(\hat{x}_t, y^\star) - f(x^\star, \hat{y}_t)] \leq \tfrac{1}{2\eta}\EE\left[(d_\MCal(x_t, x^\star))^2 - (d_\MCal(x_{t+1}, x^\star))^2 + (d_\NCal(y_t, y^\star))^2 - (d_\NCal(y_{t+1}, y^\star))^2\right]} \\
& & + (6\overline{\xi}_0\eta\ell^2 + \tfrac{\mu}{2} - \tfrac{1}{2\eta}\underline{\xi}_0)\EE\left[(d_\MCal(\hat{x}_t, x_t))^2 + (d_\NCal(\hat{y}_t, y_t))^2\right] - \tfrac{\mu}{4}\EE\left[(d_\MCal(\hat{x}_t, x^\star))^2 + (d_\NCal(\hat{y}_t, y^\star))^2\right] + 3\overline{\xi}_0\eta\sigma^2. 
\end{eqnarray*}
Since $(x^\star, y^\star) \in \MCal \times \NCal$ is a global saddle point of $f$, we have $\EE[f(\hat{x}_t, y^\star) - f(x^\star, \hat{y}_t)] \geq 0$.  Then, we have
\begin{eqnarray*}
\lefteqn{\EE\left[(d_\MCal(x_{t+1}, x^\star))^2 + (d_\NCal(y_{t+1}, y^\star))^2\right]} \\
& \leq & \left(1 - \tfrac{\mu\eta}{2}\right)\EE\left[(d_\MCal(x_t, x^\star))^2 + (d_\NCal(y_t, y^\star))^2\right] + (12\overline{\xi}_0\eta^2\ell^2 + \mu\eta - \underline{\xi}_0)\EE\left[(d_\MCal(\hat{x}_t, x_t))^2 + (d_\NCal(\hat{y}_t, y_t))^2\right] \\ 
& & + 6\overline{\xi}_0\eta^2\sigma^2. 
\end{eqnarray*}
Since $\eta \leq \min\{\frac{1}{24\ell\sqrt{\tau_0}}, \frac{\underline{\xi}_0}{2\mu}\}$, we have $12\overline{\xi}_0\eta^2\ell^2 + \mu\eta - \underline{\xi}_0 \leq 0$. This implies that 
\begin{equation*}
\EE\left[(d_\MCal(x_{t+1}, x^\star))^2 + (d_\NCal(y_{t+1}, y^\star))^2\right] \leq \left(1 - \tfrac{\mu\eta}{2}\right)\EE\left[(d_\MCal(x_t, x^\star))^2 + (d_\NCal(y_t, y^\star))^2\right] + 6\overline{\xi}_0\eta^2\sigma^2. 
\end{equation*}
By the definition, we have $\tau_0 \geq 1$, $\kappa \geq 1$ and $\underline{\xi}_0 \leq 1$. This implies that 
\begin{equation*}
1 - \tfrac{\mu\eta}{2} \geq 1 - \min\left\{\tfrac{1}{48\kappa\sqrt{\tau_0}}, \tfrac{\underline{\xi}_0}{4}\right\} > 0. 
\end{equation*}         
By the inductive arguments, we have
\begin{eqnarray*}
\EE\left[(d_\MCal(x_T, x^\star))^2 + (d_\NCal(y_T, y^\star))^2\right] & \leq & \left(1 - \tfrac{\mu\eta}{2}\right)^T\left((d_\MCal(x_0, x^\star))^2 + (d_\NCal(y_0, y^\star))^2\right) + 6\overline{\xi}_0\eta^2\sigma^2\left(\sum_{t=0}^{T-1} \left(1 - \tfrac{\mu\eta}{2}\right)^t\right) \\
& \leq & \left(1 - \tfrac{\mu\eta}{2}\right)^T D_0 + \tfrac{12\overline{\xi}_0\eta\sigma^2}{\mu}. 
\end{eqnarray*}
Since $\eta = \min\{\frac{1}{24\ell\sqrt{\tau_0}}, \frac{\underline{\xi}_0}{2\mu}, \tfrac{2(\log(T) + \log(\mu^2 D_0 \sigma^{-2}))}{\mu T}\}$, we have 
\begin{eqnarray*}
\left(1 - \tfrac{\mu\eta}{2}\right)^T D_0 & \leq & \left(1 - \min\left\{\tfrac{1}{48\kappa\sqrt{\tau_0}}, \tfrac{\underline{\xi}_0}{4}\right\}\right)^T D_0 + \left(1 - \tfrac{\log(\mu^2 D_0 \sigma^{-2}T)}{T}\right)^T D_0 \\ 
&  \overset{1 + x \leq e^x}{\leq} & \left(1 - \min\left\{\tfrac{1}{48\kappa\sqrt{\tau_0}}, \tfrac{\underline{\xi}_0}{4}\right\}\right)^T D_0 + \tfrac{\sigma^2}{\mu^2 T}, 
\end{eqnarray*}
and 
\begin{equation*}
\tfrac{12\overline{\xi}_0\eta\sigma^2}{\mu} \leq \tfrac{24\overline{\xi}_0\sigma^2}{\mu^2 T}\log\left(\tfrac{\mu^2 D_0 T}{\sigma^2}\right). 
\end{equation*}
Putting these pieces together yields that 
\begin{equation*}
\EE\left[(d_\MCal(x_T, x^\star))^2 + (d_\NCal(y_T, y^\star))^2\right] \leq \left(1 - \min\left\{\tfrac{1}{48\kappa\sqrt{\tau_0}}, \tfrac{\underline{\xi}_0}{4}\right\}\right)^T D_0 + \tfrac{\sigma^2}{\mu^2 T} + \tfrac{24\overline{\xi}_0\sigma^2}{\mu^2 T}\log\left(\tfrac{\mu^2 D_0 T}{\sigma^2}\right). 
\end{equation*}
This completes the proof. 

\subsection{Proof of Theorem~\ref{Thm:SRCEG-CC}}
By the inductive formulas of $\bar{x}_{t+1} = \Exp_{\bar{x}_t}(\tfrac{1}{t+1} \cdot \Exp_{\bar{x}_t}^{-1}(\hat{x}_t))$ and $\bar{y}_{t+1} = \Exp_{\bar{y}_t}(\tfrac{1}{t+1} \cdot \Exp_{\bar{y}_t}^{-1}(\hat{y}_t))$ and using~\citet[Lemma~C.2]{Zhang-2022-Minimax}, we have
\begin{equation*}
f(\bar{x}_T, y^\star) - f(x^\star, \bar{y}_T) \leq \tfrac{1}{T}\left(\sum_{t=0}^{T-1} f(\hat{x}_t, y^\star) - f(x^\star, \hat{y}_t)\right). 
\end{equation*}
Plugging the above inequality into the inequality from Lemma~\ref{Lemma:SRCEG-key-inequality} yields that (recall that $\mu = 0$ in geodesically convex-concave setting here)
\begin{eqnarray*}
\lefteqn{\EE[f(\bar{x}_T, y^\star) - f(x^\star, \bar{y}_T)] \leq \tfrac{1}{2\eta T}\left((d_\MCal(x_0, x^\star))^2 + (d_\NCal(y_0, y^\star))^2\right)} \\
& & + \tfrac{1}{T}\left(6\overline{\xi}_0\eta\ell^2 - \tfrac{1}{2\eta}\underline{\xi}_0\right)\left(\sum_{t=0}^{T-1} \EE\left[(d_\MCal(\hat{x}_t, x_t))^2 + (d_\NCal(\hat{y}_t, y_t))^2\right]\right) + 3\overline{\xi}_0\eta\sigma^2.  
\end{eqnarray*}
Since $\eta \leq \frac{1}{4\ell\sqrt{\tau_0}}$, we have $6\overline{\xi}_0\eta\ell^2 - \tfrac{1}{2\eta}\underline{\xi}_0 \leq 0$. Then, this together with $(d_\MCal(x_0, x^\star))^2 + (d_\NCal(y_0, y^\star))^2 \leq D_0$ implies that 
\begin{equation*}
\EE[f(\bar{x}_T, y^\star) - f(x^\star, \bar{y}_T)] \leq \tfrac{D_0}{2\eta T} + 3\overline{\xi}_0\eta\sigma^2. 
\end{equation*}
Since $\eta = \min\{\frac{1}{4\ell\sqrt{\tau_0}}, \tfrac{1}{\sigma}\sqrt{\tfrac{D_0}{\overline{\xi}_0 T}}\}$, we have 
\begin{equation*}
\tfrac{D_0}{2\eta T} \leq \tfrac{2\ell D_0\sqrt{\tau_0}}{T} + \tfrac{\sigma}{2}\sqrt{\tfrac{\overline{\xi}_0 D_0}{T}}, 
\end{equation*}
and 
\begin{equation*}
3\overline{\xi}_0\eta\sigma^2 \leq 3\sigma\sqrt{\tfrac{\overline{\xi}_0 D_0}{T}}. 
\end{equation*}
Putting these pieces together yields that 
\begin{equation*}
\EE[f(\bar{x}_T, y^\star) - f(x^\star, \bar{y}_T)] \leq \tfrac{2\ell D_0\sqrt{\tau_0}}{T} + \tfrac{7\sigma}{2}\sqrt{\tfrac{\overline{\xi}_0 D_0}{T}}. 
\end{equation*}
This completes the proof. 

\section{Missing Proofs for Riemannian Gradient Descent Ascent}
In this section, we present some technical lemmas for analyzing the convergence property of Algorithm~\ref{alg:RGDA} and~\ref{alg:SRGDA}. We also give the proofs of Theorem~\ref{Thm:RGDA-SCSC},~\ref{Thm:RGDA-CC},~\ref{Thm:SRGDA-SCSC} and~\ref{Thm:SRGDA-CC}. 

\subsection{Technical lemmas}
We provide two technical lemmas for analyzing Algorithm~\ref{alg:RGDA} and~\ref{alg:SRGDA} respectively. The first lemma gives a key inequality that is satisfied by the iterates generated by Algorithm~\ref{alg:RGDA}. 
\begin{lemma}\label{Lemma:RGDA-key-inequality}
Under Assumption~\ref{Assumption:gscsc-nonsmooth} (or Assumption~\ref{Assumption:gcc-nonsmooth}) and let $\{(x_t, y_t)\}_{t=0}^{T-1}$ be generated by Algorithm~\ref{alg:RGDA} with the stepsize $\eta_t > 0$. Then, we have
\begin{eqnarray*}
\lefteqn{f(x_t, y^\star) - f(x^\star, y_t) \leq \tfrac{1}{2\eta_t}\left((d_\MCal(x_t, x^\star))^2 - (d_\MCal(x_{t+1}, x^\star))^2\right)} \\
& & + \tfrac{1}{2\eta_t}\left((d_\NCal(y_t, y^\star))^2 - (d_\NCal(y_{t+1}, y^\star))^2\right) - \tfrac{\mu}{2}(d_\MCal(x_t, x^\star))^2 - \tfrac{\mu}{2}(d_\NCal(y_t, y^\star))^2 + \overline{\xi}_0 \eta_t L^2, 
\end{eqnarray*}
where $(x^\star, y^\star) \in \MCal \times \NCal$ is a global saddle point of $f$. 
\end{lemma}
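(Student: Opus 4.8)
The plan is to mimic the structure of Lemma~\ref{Lemma:RCEG-key-inequality} but in the simpler one-step setting of RGDA, where there is no extrapolation point. First I would use geodesic strong convexity-strong concavity to bound the duality gap evaluated at the current iterate: by Definition~\ref{def:SCSC},
\begin{eqnarray*}
f(x_t, y^\star) - f(x^\star, y_t) & \leq & -\langle \subg_x f(x_t, y_t), \Exp_{x_t}^{-1}(x^\star)\rangle_{x_t} + \langle \subg_y f(x_t, y_t), \Exp_{y_t}^{-1}(y^\star)\rangle_{y_t} \\
& & - \tfrac{\mu}{2}(d_\MCal(x_t, x^\star))^2 - \tfrac{\mu}{2}(d_\NCal(y_t, y^\star))^2,
\end{eqnarray*}
using that $f(x_t, y^\star) - f(x^\star, y_t) = (f(x_t, y_t) - f(x^\star, y_t)) - (f(x_t, y_t) - f(x_t, y^\star))$ and applying the strong convexity inequality in $x$ at $(x_t, y_t)$ with test point $x^\star$ and the strong concavity inequality in $y$ at $(x_t, y_t)$ with test point $y^\star$.

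Next I would translate the inner-product terms into distances using the update rule and the trigonometric comparison inequalities. From the RGDA scheme, $\Exp_{x_t}^{-1}(x_{t+1}) = -\eta_t g_x^t$ where $g_x^t = \subg_x f(x_t, y_t)$, so $\langle \subg_x f(x_t, y_t), \Exp_{x_t}^{-1}(x^\star)\rangle_{x_t} = -\tfrac{1}{\eta_t}\langle \Exp_{x_t}^{-1}(x_{t+1}), \Exp_{x_t}^{-1}(x^\star)\rangle_{x_t}$, and similarly for $y$. Then I would apply Proposition~\ref{Prop:key-inequality-LB} (the lower-curvature-bound TCI) to the geodesic triangle $(x_{t+1}, x_t, x^\star)$ with the angle at $x_t$, giving
\begin{equation*}
\langle \Exp_{x_t}^{-1}(x_{t+1}), \Exp_{x_t}^{-1}(x^\star)\rangle_{x_t} \leq \tfrac12\big(\overline{\xi}_0 (d_\MCal(x_t, x_{t+1}))^2 + (d_\MCal(x_t, x^\star))^2 - (d_\MCal(x_{t+1}, x^\star))^2\big),
\end{equation*}
and an analogous bound on the $y$-side; note the sign of the extragradient-style correction flips the direction so we need the upper bound on that inner product, matching the $-\langle g_x, \Exp^{-1}(x^\star)\rangle$ sign. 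Since $d_\MCal(x_t, x_{t+1}) = \eta_t\|g_x^t\| \leq \eta_t L$ by geodesic $L$-Lipschitzness (the norm of the subgradient is bounded by $L$), the correction term $\tfrac{1}{2\eta_t}\cdot \overline{\xi}_0 \eta_t^2 L^2$ collapses to $\tfrac12 \overline{\xi}_0 \eta_t L^2$ per coordinate, and summing the $x$ and $y$ contributions yields the stated $\overline{\xi}_0 \eta_t L^2$.

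Assembling these pieces, dividing by $\eta_t$ where appropriate, and combining the telescoping distance terms gives exactly the claimed inequality. The main subtlety---though it is routine rather than deep---is getting the curvature constant and the sign conventions right: here, unlike in RCEG, there is no $\underline{\xi}_0$ term because we do not need a lower bound on $\langle \Exp_{x_t}^{-1}(x_t), \Exp_{x_t}^{-1}(x^\star)\rangle$ (the ``reference point'' of the inner product coincides with $x_t$, so that term is absent), which is why Assumption~\ref{Assumption:gscsc-nonsmooth} only requires a \emph{lower} sectional-curvature bound. One should also be careful that $\|g_x^t\|^2 + \|g_y^t\|^2 \leq L^2$ versus $\|g_x^t\| \leq L$ and $\|g_y^t\| \leq L$ separately; tracking which convention the geodesic Lipschitz definition gives (Definition~\ref{def:Lip} uses $d_\MCal(x,x') + d_\NCal(y,y')$, which yields the per-block bound) determines whether the final constant is $\overline{\xi}_0 \eta_t L^2$ or a variant, so I would verify that the bound on $\|\subg_x f\|$ and $\|\subg_y f\|$ extracted from Definition~\ref{def:Lip} is consistent with the stated constant.
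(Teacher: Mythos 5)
Your proposal is correct and follows essentially the same route as the paper's proof: bound the gap via geodesic strong convexity-concavity at $(x_t,y_t)$, rewrite the subgradient inner products using $\Exp_{x_t}^{-1}(x_{t+1}) = -\eta_t\,\subg_x f(x_t,y_t)$ (and its $y$-analogue), apply the lower-curvature TCI of Proposition~\ref{Prop:key-inequality-LB}, and control $d_\MCal(x_t,x_{t+1}) \leq \eta_t L$ via the per-block subgradient bound from Definition~\ref{def:Lip}. Your side remarks---that no $\underline{\xi}_0$ term appears because the base point of the exponential map coincides with $x_t$, and that the Lipschitz definition yields $\|\subg_x f\|\leq L$ and $\|\subg_y f\|\leq L$ separately so each block contributes $\tfrac12\overline{\xi}_0\eta_t L^2$---are both consistent with the paper's argument.
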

\begin{proof}
Since $f$ is geodesically strongly-concave-strongly-concave with the modulus $\mu \geq 0$ (here $\mu = 0$ means that $f$ is geodesically concave-concave), we have
\begin{eqnarray*}
\lefteqn{f(x_t, y^\star) - f(x^\star, y_t) =  f(x_t, y_t) - f(x^\star, y_t) - (f(x_t, y_t) - f(x_t, y^\star))}  \\
& \leq & - \langle \subg_x f(x_t, y_t), \Exp_{x_t}^{-1}(x^\star) \rangle + \langle \subg_y f(x_t, y_t), \Exp_{y_t}^{-1}(y^\star)\rangle - \tfrac{\mu}{2}(d_\MCal(x_t, x^\star))^2 - \tfrac{\mu}{2}(d_\NCal(y_t, y^\star))^2.
\end{eqnarray*}
Recalling also from the scheme of Algorithm~\ref{alg:RGDA} that we have
\begin{eqnarray*}
x_{t+1} & \leftarrow & \Exp_{x_t}(-\eta_t \cdot \subg_x f(x_t, y_t)), \\
y_{t+1} & \leftarrow & \Exp_{y_t}(\eta_t \cdot \subg_y f(x_t, y_t)). 
\end{eqnarray*}
By the definition of an exponential map, we have
\begin{equation}\label{inequality:RGDA-opt}
\begin{array}{lcl}
\Exp_{x_t}^{-1}(x_{t+1}) & = & -\eta_t \cdot \subg_x f(x_t,y_t),  \\
\Exp_{y_t}^{-1}(y_{t+1}) & = & \eta_t \cdot \subg_y f(x_t, y_t). 
\end{array}
\end{equation}
This implies that 
\begin{eqnarray*}
- \langle \subg_x f(x_t, y_t), \Exp_{x_t}^{-1}(x^\star) \rangle & = & \tfrac{1}{\eta_t} \langle \Exp_{x_t}^{-1}(x_{t+1}), \Exp_{x_t}^{-1}(x^\star)\rangle, \\
\langle \subg_y f(x_t, y_t), \Exp_{y_t}^{-1}(y^\star) \rangle & = & \tfrac{1}{\eta_t} \langle \Exp_{y_t}^{-1}(y_{t+1}), \Exp_{y_t}^{-1}(y^\star)\rangle. 
\end{eqnarray*}
Putting these pieces together yields that 
\begin{eqnarray}\label{inequality:RGDA-key-first}
\lefteqn{f(x_t, y^\star) - f(x^\star, y_t) \leq \tfrac{1}{\eta_t}\langle \Exp_{x_t}^{-1}(x_{t+1}), \Exp_{x_t}^{-1}(x^\star)\rangle} \\
& & + \tfrac{1}{\eta_t}\langle \Exp_{y_t}^{-1}(y_{t+1}), \Exp_{y_t}^{-1}(y^\star)\rangle - \tfrac{\mu}{2}(d_\MCal(x_t, x^\star))^2 - \tfrac{\mu}{2}(d_\NCal(y_t, y^\star))^2.  \nonumber
\end{eqnarray}
It suffices to bound the terms in the right-hand side of Eq.~\eqref{inequality:RGDA-key-first} by leveraging the celebrated comparison inequalities on Riemannian manifold with lower bounded sectional curvature (see Proposition~\ref{Prop:key-inequality-LB}). More specifically, we define the constants using $\overline{\xi}(\cdot, \cdot)$ and $\underline{\xi}(\cdot, \cdot)$ from Proposition~\ref{Prop:key-inequality-LB} as follows, 
\begin{equation*}
\overline{\xi}_0 = \overline{\xi}(\kappa_{\min}, D). 
\end{equation*}
By Proposition~\ref{Prop:key-inequality-LB} and using that $\max\{d_\MCal(x_t, x^\star), d_\NCal(y_t, y^\star)\} \leq D$, we have
\begin{eqnarray*}
\langle \Exp_{x_t}^{-1}(x_{t+1}), \Exp_{x_t}^{-1}(x^\star) \rangle & \leq & \tfrac{1}{2}\left(\overline{\xi}_0(d_\MCal(x_t, x_{t+1}))^2 + (d_\MCal(x_t, x^\star))^2 - (d_\MCal(x_{t+1}, x^\star))^2\right), \\
\langle \Exp_{y_t}^{-1}(y_{t+1}), \Exp_{y_t}^{-1}(y^\star) \rangle & \leq & \tfrac{1}{2}\left(\overline{\xi}_0(d_\NCal(y_t, y_{t+1}))^2 + (d_\NCal(y_t, y^\star))^2 - (d_\NCal(y_{t+1}, y^\star))^2\right). 
\end{eqnarray*}
Since $f$ is geodesically $L$-Lipschitz, we have 
\begin{equation*}
\|\subg_x f(x_t,y_t)\| \leq L, \quad \|\subg_y f(x_t,y_t)\| \leq L. 
\end{equation*}
By the definition of an exponential map and Riemannian metric, we have
\begin{equation*}
\begin{array}{lll}
d_\MCal(x_t, x_{t+1}) & = & \|\Exp_{x_t}^{-1}(x_{t+1})\| \overset{\textnormal{Eq.~\eqref{inequality:RGDA-opt}}}{=} \|\eta_t \cdot \subg_x f(x_t, y_t)\| \leq \eta_t L,  \\
d_\NCal(y_t, y_{t+1}) & = & \|\Exp_{y_t}^{-1}(y_{t+1})\| \overset{\textnormal{Eq.~\eqref{inequality:RGDA-opt}}}{=} \|\eta_t \cdot \subg_y f(x_t, y_t)\| \leq \eta_t L. 
\end{array}
\end{equation*}
Putting these pieces together yields that 
\begin{eqnarray*}
\langle \Exp_{x_t}^{-1}(x_{t+1}), \Exp_{x_t}^{-1}(x^\star) \rangle & \leq & \tfrac{1}{2}\left(\overline{\xi}_0 \eta_t^2 L^2 + (d_\MCal(x_t, x^\star))^2 - (d_\MCal(x_{t+1}, x^\star))^2\right), \\
\langle \Exp_{y_t}^{-1}(y_{t+1}), \Exp_{y_t}^{-1}(y^\star) \rangle & \leq & \tfrac{1}{2}\left(\overline{\xi}_0 \eta_t^2 L^2 + (d_\NCal(y_t, y^\star))^2 - (d_\NCal(y_{t+1}, y^\star))^2\right). 
\end{eqnarray*}
Plugging the above inequalities into Eq.~\eqref{inequality:RGDA-key-first} yields the desired inequality. 
\end{proof}
The second lemma gives another key inequality that is satisfied by the iterates generated by Algorithm~\ref{alg:SRGDA}. 
\begin{lemma}\label{Lemma:SRGDA-key-inequality}
Under Assumption~\ref{Assumption:gscsc-nonsmooth} (or Assumption~\ref{Assumption:gcc-nonsmooth}) and the noisy model (cf. Eq.~\eqref{def:noisy-model-RGDA} and~\eqref{def:noisy-bound-RGDA}) and let $\{(x_t, y_t)\}_{t=0}^{T-1}$ be generated by Algorithm~\ref{alg:SRGDA} with the stepsize $\eta_t > 0$. Then, we have
\begin{eqnarray*}
\lefteqn{\EE[f(x_t, y^\star) - f(x^\star, y_t)] \leq \tfrac{1}{2\eta_t}\EE\left[(d_\MCal(x_t, x^\star))^2 - (d_\MCal(x_{t+1}, x^\star))^2\right]} \\
& & + \tfrac{1}{2\eta_t}\EE\left[(d_\NCal(y_t, y^\star))^2 - (d_\NCal(y_{t+1}, y^\star))^2\right] - \tfrac{\mu}{2}\EE\left[(d_\MCal(x_t, x^\star))^2 + (d_\NCal(y_t, y^\star))^2\right] + 2\overline{\xi}_0\eta_t(L^2 + \sigma^2), 
\end{eqnarray*}
where $(x^\star, y^\star) \in \MCal \times \NCal$ is a global saddle point of $f$. 
\end{lemma}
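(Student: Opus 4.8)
The plan is to follow the deterministic argument of Lemma~\ref{Lemma:RGDA-key-inequality} almost line for line, inserting the noise decomposition $g_x^t = \subg_x f(x_t,y_t) + \xi_x^t$, $g_y^t = \subg_y f(x_t,y_t) + \xi_y^t$ from Eq.~\eqref{def:noisy-model-RGDA}, and then absorbing the stochastic terms once we pass to expectations using the unbiasedness and bounded-variance conditions of Eq.~\eqref{def:noisy-bound-RGDA}. The starting point is exactly the convexity estimate used in Lemma~\ref{Lemma:RGDA-key-inequality}: applying Definition~\ref{def:SCSC} gives
\[
f(x_t, y^\star) - f(x^\star, y_t) \;\leq\; -\langle \subg_x f(x_t, y_t), \Exp_{x_t}^{-1}(x^\star)\rangle + \langle \subg_y f(x_t, y_t), \Exp_{y_t}^{-1}(y^\star)\rangle - \tfrac{\mu}{2}(d_\MCal(x_t, x^\star))^2 - \tfrac{\mu}{2}(d_\NCal(y_t, y^\star))^2 .
\]

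Next, using $\subg_x f(x_t,y_t) = g_x^t - \xi_x^t$ and $\subg_y f(x_t,y_t) = g_y^t - \xi_y^t$ together with the SRGDA identities $\Exp_{x_t}^{-1}(x_{t+1}) = -\eta_t g_x^t$ and $\Exp_{y_t}^{-1}(y_{t+1}) = \eta_t g_y^t$, I would rewrite the two inner products on the right as $\tfrac{1}{\eta_t}\langle \Exp_{x_t}^{-1}(x_{t+1}), \Exp_{x_t}^{-1}(x^\star)\rangle + \langle \xi_x^t, \Exp_{x_t}^{-1}(x^\star)\rangle$ and $\tfrac{1}{\eta_t}\langle \Exp_{y_t}^{-1}(y_{t+1}), \Exp_{y_t}^{-1}(y^\star)\rangle - \langle \xi_y^t, \Exp_{y_t}^{-1}(y^\star)\rangle$. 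Then I would invoke the lower-curvature comparison inequality (Proposition~\ref{Prop:key-inequality-LB}) on the geodesic triangles $(x_t, x_{t+1}, x^\star)$ and $(y_t, y_{t+1}, y^\star)$ --- legitimate since only a lower curvature bound is assumed in Assumption~\ref{Assumption:gscsc-nonsmooth} and $\max\{d_\MCal(x_t,x^\star), d_\NCal(y_t,y^\star)\}\le D$ --- to obtain
\[
\langle \Exp_{x_t}^{-1}(x_{t+1}), \Exp_{x_t}^{-1}(x^\star)\rangle \leq \tfrac{1}{2}\left(\overline{\xi}_0\, (d_\MCal(x_t, x_{t+1}))^2 + (d_\MCal(x_t, x^\star))^2 - (d_\MCal(x_{t+1}, x^\star))^2\right),
\]
and likewise for $y$. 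Since $d_\MCal(x_t,x_{t+1})^2 = \eta_t^2\|g_x^t\|^2$ and $d_\NCal(y_t,y_{t+1})^2 = \eta_t^2\|g_y^t\|^2$, dividing by $\eta_t$ makes these squared-displacement contributions total $\tfrac{\overline{\xi}_0\eta_t}{2}(\|g_x^t\|^2 + \|g_y^t\|^2)$, while the $(d_\MCal(x_t,x^\star))^2$, $(d_\NCal(y_t,y^\star))^2$ terms telescope against the $-(d_\MCal(x_{t+1},x^\star))^2$, $-(d_\NCal(y_{t+1},y^\star))^2$ terms in the desired inequality.

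Finally I would take expectations, conditioning on the filtration generated by the iterates through step $t$ (which determines $x_t, y_t$, hence the history-measurable vectors $\Exp_{x_t}^{-1}(x^\star)$ and $\Exp_{y_t}^{-1}(y^\star)$). Unbiasedness $\EE[\xi_x^t]=\EE[\xi_y^t]=0$ then annihilates the two noise cross-terms $\pm\langle \xi^t, \Exp^{-1}(\cdot)\rangle$, as well as the cross-term in the expansion $\|g_x^t\|^2 = \|\subg_x f(x_t,y_t)\|^2 + 2\langle \subg_x f(x_t,y_t), \xi_x^t\rangle + \|\xi_x^t\|^2$; combining the geodesic $L$-Lipschitz bound $\|\subg_x f(x_t,y_t)\|, \|\subg_y f(x_t,y_t)\|\le L$ with $\EE[\|\xi_x^t\|^2 + \|\xi_y^t\|^2]\le \sigma^2$ gives $\EE[\|g_x^t\|^2 + \|g_y^t\|^2] \le 2L^2 + \sigma^2 \le 2(L^2+\sigma^2)$, so the displacement term is bounded by $\overline{\xi}_0\eta_t(L^2+\sigma^2)\le 2\overline{\xi}_0\eta_t(L^2+\sigma^2)$. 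Assembling the pieces and applying the tower property then yields the claimed inequality.

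The argument is essentially the deterministic proof plus bookkeeping, so the only genuinely delicate point --- and the one I would be most careful about --- is the probabilistic step: making precise that $\xi_x^t$ and $\xi_y^t$ are (conditionally) mean-zero given the past, so that their inner products with the history-measurable vectors $\Exp_{x_t}^{-1}(x^\star)$, $\Exp_{y_t}^{-1}(y^\star)$ and with $\subg f(x_t,y_t)$ all vanish in expectation, and that the random squared-displacement terms $\eta_t^2\|g_x^t\|^2$, $\eta_t^2\|g_y^t\|^2$ are controlled by a deterministic quantity. Everything else --- the comparison inequality, the Lipschitz subgradient bound, and the telescoping-ready form of the estimate --- transfers verbatim from Lemma~\ref{Lemma:RGDA-key-inequality}.
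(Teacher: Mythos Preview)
Your proposal is correct and follows essentially the same route as the paper's proof: the convexity estimate from Definition~\ref{def:SCSC}, the SRGDA identity $\Exp_{x_t}^{-1}(x_{t+1})=-\eta_t g_x^t$, Proposition~\ref{Prop:key-inequality-LB} applied to the triangles at $x_t$ and $y_t$, and then the passage to expectations using Eq.~\eqref{def:noisy-bound-RGDA}. The only (harmless) difference is in bounding the squared displacement: the paper uses $\|g_x^t\|\le L+\|\xi_x^t\|$ together with $(a+b)^2\le 2a^2+2b^2$, whereas you expand $\|g_x^t\|^2$ exactly and kill the cross term $2\langle \subg_x f(x_t,y_t),\xi_x^t\rangle$ in expectation---this is slightly tighter but lands on the same final constant $2\overline{\xi}_0\eta_t(L^2+\sigma^2)$.
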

\begin{proof}
Using the same argument, we have ($\mu=0$ refers to geodesically convex-concave case)
\begin{eqnarray*}
\lefteqn{f(x_t, y^\star) - f(x^\star, y_t) =  f(x_t, y_t) - f(x^\star, y_t) - (f(x_t, y_t) - f(x_t, y^\star))}  \\
& \leq & - \langle \subg_x f(x_t, y_t), \Exp_{x_t}^{-1}(x^\star) \rangle + \langle \subg_y f(x_t, y_t), \Exp_{y_t}^{-1}(y^\star)\rangle - \tfrac{\mu}{2}(d_\MCal(x_t, x^\star))^2 - \tfrac{\mu}{2}(d_\NCal(y_t, y^\star))^2.
\end{eqnarray*}
Combining the arguments used in Lemma~\ref{Lemma:RGDA-key-inequality} and the scheme of Algorithm~\ref{alg:SRCEG}, we have
\begin{eqnarray*}
- \langle g_x^t, \Exp_{x_t}^{-1}(x^\star) \rangle & = & \tfrac{1}{\eta_t}\langle \Exp_{x_t}^{-1}(x_{t+1}), \Exp_{x_t}^{-1}(x^\star)\rangle, \\
\langle g_y^t, \Exp_{y_t}^{-1}(y^\star) \rangle & = & \tfrac{1}{\eta_t}\langle \Exp_{y_t}^{-1}(y_{t+1}), \Exp_{y_t}^{-1}(y^\star)\rangle. 
\end{eqnarray*}
Putting these pieces together with Eq.~\eqref{def:noisy-model-RGDA} yields that 
\begin{eqnarray}\label{inequality:SRGDA-key-first}
\lefteqn{f(x_t, y^\star) - f(x^\star, y_t) \leq \tfrac{1}{\eta_t}\langle \Exp_{x_t}^{-1}(x_{t+1}), \Exp_{x_t}^{-1}(x^\star)} \\
& & + \tfrac{1}{\eta_t}\langle \Exp_{y_t}^{-1}(y_{t+1}), \Exp_{y_t}^{-1}(y^\star)\rangle - \tfrac{\mu}{2}(d_\MCal(x_t, x^\star))^2 - \tfrac{\mu}{2}(d_\NCal(y_t, y^\star))^2 + \langle \xi_x^t, \Exp_{x_t}^{-1}(x^\star)\rangle - \langle \xi_y^t, \Exp_{y_t}^{-1}(y^\star)\rangle. \nonumber
\end{eqnarray}
By the same argument as used in Lemma~\ref{Lemma:RGDA-key-inequality} and Eq.~\eqref{def:noisy-model-RGDA}, we have
\begin{eqnarray*}
\langle \Exp_{x_t}^{-1}(x_{t+1}), \Exp_{x_t}^{-1}(x^\star) \rangle & \leq & \tfrac{1}{2}\left(\overline{\xi}_0(d_\MCal(x_t, x_{t+1}))^2 + (d_\MCal(x_t, x^\star))^2 - (d_\MCal(x_{t+1}, x^\star))^2\right), \\
\langle \Exp_{y_t}^{-1}(y_{t+1}), \Exp_{y_t}^{-1}(y^\star) \rangle & \leq & \tfrac{1}{2}\left(\overline{\xi}_0(d_\NCal(y_t, y_{t+1}))^2 + (d_\NCal(y_t, y^\star))^2 - (d_\NCal(y_{t+1}, y^\star))^2\right), 
\end{eqnarray*}
and 
\begin{equation*}
\begin{array}{lll}
d_\MCal(x_t, x_{t+1}) & = & \|\Exp_{x_t}^{-1}(x_{t+1})\| \ = \ \|\eta_t \cdot g_x^t\| \ \leq \ \eta_t (L + \|\xi_x^t\|),  \\
d_\NCal(y_t, y_{t+1}) & = & \|\Exp_{y_t}^{-1}(y_{t+1})\| \ = \ \|\eta_t \cdot g_y^t\| \ \leq \ \eta_t (L + \|\xi_y^t\|). 
\end{array}
\end{equation*}
Therefore, we have 
\begin{eqnarray*}
\lefteqn{\langle \Exp_{x_t}^{-1}(x_{t+1}), \Exp_{x_t}^{-1}(x^\star) \rangle + \langle \Exp_{y_t}^{-1}(y_{t+1}), \Exp_{y_t}^{-1}(y^\star) \rangle} \\
& \leq & \tfrac{1}{2}\overline{\xi}_0\eta_t^2(4L^2 + 2\|\xi_x^t\|^2 + 2\|\xi_y^t\|^2) + \tfrac{1}{2}\left((d_\MCal(x_t, x^\star))^2 - (d_\MCal(x_{t+1}, x^\star))^2 + (d_\NCal(y_t, y^\star))^2 - (d_\NCal(y_{t+1}, y^\star))^2\right). 
\end{eqnarray*}
Plugging the above inequalities into Eq.~\eqref{inequality:SRGDA-key-first} yields that 
\begin{eqnarray*}
\lefteqn{f(x_t, y^\star) - f(x^\star, y_t) \leq \tfrac{1}{2\eta_t}\left((d_\MCal(x_t, x^\star))^2 - (d_\MCal(x_{t+1}, x^\star))^2 + (d_\NCal(y_t, y^\star))^2 - (d_\NCal(y_{t+1}, y^\star))^2\right)} \\
& & + \overline{\xi}_0\eta_t(2L^2 + \|\xi_x^t\|^2 + \|\xi_y^t\|^2) - \tfrac{\mu}{2}(d_\MCal(x_t, x^\star))^2 - \tfrac{\mu}{2}(d_\NCal(y_t, y^\star))^2 + \langle \xi_x^t, \Exp_{x_t}^{-1}(x^\star)\rangle - \langle \xi_y^t, \Exp_{y_t}^{-1}(y^\star)\rangle.
\end{eqnarray*}
Taking the expectation of both sides and using Eq.~\eqref{def:noisy-bound-RGDA} yields the desired inequality. 
\end{proof}

\subsection{Proof of Theorem~\ref{Thm:RGDA-SCSC}}
Since $(x^\star, y^\star) \in \MCal \times \NCal$ is a global saddle point of $f$, we have $f(x_t, y^\star) - f(x^\star, y_t) \geq 0$. Plugging this inequality into the inequality from Lemma~\ref{Lemma:RGDA-key-inequality} yields that 
\begin{equation*}
(d_\MCal(x_{t+1}, x^\star))^2 + (d_\NCal(y_{t+1}, y^\star))^2 \leq (1 - \mu\eta_t)\left((d_\MCal(x_t, x^\star))^2 + (d_\NCal(y_t, y^\star))^2\right) + 2\overline{\xi}_0\eta_t^2 L^2. 
\end{equation*}
Since $\eta_t = \frac{1}{\mu}\min\{1, \frac{2}{t}\}$, we have 
\begin{equation*}
(d_\MCal(x_{t+1}, x^\star))^2 + (d_\NCal(y_{t+1}, y^\star))^2 \leq (1 - \tfrac{2}{t})\left((d_\MCal(x_t, x^\star))^2 + (d_\NCal(y_t, y^\star))^2\right) + \tfrac{8\overline{\xi}_0 L^2}{\mu^2 t^2}, \quad \textnormal{for all } t \geq 2. 
\end{equation*}         
Letting $\{b_t\}_{t \geq 1}$ be a nonnegative sequence such that $a_{t+1} \leq (1 - \frac{P}{t})a_t + \frac{Q}{t^2}$ where $P > 1$ and $Q > 0$. Then,~\citet{Chung-1954-Stochastic} proved that $a_t \leq \frac{Q}{P-1}\frac{1}{t}$. Therefore, we have
\begin{equation*}
(d_\MCal(x_t, x^\star))^2 + (d_\NCal(y_t, y^\star))^2 \leq \tfrac{8\overline{\xi}_0 L^2}{\mu^2 t}, \quad \textnormal{for all } t \geq 2. 
\end{equation*}
This completes the proof. 

\subsection{Proof of Theorem~\ref{Thm:RGDA-CC}}
By the inductive formulas of $\bar{x}_{t+1} = \Exp_{\bar{x}_t}(\tfrac{1}{t+1} \cdot \Exp_{\bar{x}_t}^{-1}(x_t))$ and $\bar{y}_{t+1} = \Exp_{\bar{y}_t}(\tfrac{1}{t+1} \cdot \Exp_{\bar{y}_t}^{-1}(y_t))$ and using~\citet[Lemma~C.2]{Zhang-2022-Minimax}, we have
\begin{equation*}
f(\bar{x}_T, y^\star) - f(x^\star, \bar{y}_T) \leq \tfrac{1}{T}\left(\sum_{t=0}^{T-1} f(x_t, y^\star) - f(x^\star, y_t)\right). 
\end{equation*}
Plugging the above inequality into the inequality from Lemma~\ref{Lemma:RGDA-key-inequality} yields that (recall that $\mu = 0$ in geodesically convex-concave setting and $\eta_t = \eta = \tfrac{1}{L}\sqrt{\tfrac{D_0}{2\overline{\xi}_0 T}}$)
\begin{equation*}
f(\bar{x}_T, y^\star) - f(x^\star, \bar{y}_T) \leq \tfrac{1}{2\eta T}\left((d_\MCal(x_0, x^\star))^2 + (d_\NCal(y_0, y^\star))^2\right) + \overline{\xi}_0 \eta L^2.  
\end{equation*}
This together with $(d_\MCal(x_0, x^\star))^2 + (d_\NCal(y_0, y^\star))^2 \leq D_0$ implies that 
\begin{equation*}
f(\bar{x}_T, y^\star) - f(x^\star, \bar{y}_T) \leq \tfrac{D_0}{2\eta T} + \overline{\xi}_0\eta L^2. 
\end{equation*}
Since $\eta = \tfrac{1}{L}\sqrt{\tfrac{D_0}{2\overline{\xi}_0 T}}$, we have 
\begin{equation*}
f(\bar{x}_T, y^\star) - f(x^\star, \bar{y}_T) \leq L\sqrt{\tfrac{2\overline{\xi}_0 D_0}{T}}. 
\end{equation*}
This completes the proof. 

\subsection{Proof of Theorem~\ref{Thm:SRGDA-SCSC}}
Since $(x^\star, y^\star) \in \MCal \times \NCal$ is a global saddle point of $f$, we have $\EE[f(x_t, y^\star) - f(x^\star, y_t)] \geq 0$. Plugging this inequality into the inequality from Lemma~\ref{Lemma:SRGDA-key-inequality} yields that 
\begin{equation*}
\EE\left[(d_\MCal(x_{t+1}, x^\star))^2 + (d_\NCal(y_{t+1}, y^\star))^2\right] \leq (1 - \mu\eta_t)\EE\left[(d_\MCal(x_t, x^\star))^2 + (d_\NCal(y_t, y^\star))^2\right] + 4\overline{\xi}_0\eta_t^2(L^2 + \sigma^2). 
\end{equation*}
Since $\eta_t = \frac{1}{\mu}\min\{1, \frac{2}{t}\}$, we have 
\begin{equation*}
\EE\left[(d_\MCal(x_{t+1}, x^\star))^2 + (d_\NCal(y_{t+1}, y^\star))^2\right] \leq (1 - \tfrac{2}{t})\EE\left[(d_\MCal(x_t, x^\star))^2 + (d_\NCal(y_t, y^\star))^2\right] + \tfrac{16\overline{\xi}_0(L^2 + \sigma^2)}{\mu^2 t^2}, \quad \textnormal{for all } t \geq 2. 
\end{equation*}         
Applying the same argument as used in Theorem~\ref{Thm:RGDA-SCSC}, we have
\begin{equation*}
(d_\MCal(x_t, x^\star))^2 + (d_\NCal(y_t, y^\star))^2 \leq \tfrac{16\overline{\xi}_0(L^2 + \sigma^2)}{\mu^2 t}, \quad \textnormal{for all } t \geq 2. 
\end{equation*}
This completes the proof. 

\subsection{Proof of Theorem~\ref{Thm:SRGDA-CC}}
Using the same argument, we have
\begin{equation*}
f(\bar{x}_T, y^\star) - f(x^\star, \bar{y}_T) \leq \tfrac{1}{T}\left(\sum_{t=0}^{T-1} f(x_t, y^\star) - f(x^\star, y_t)\right). 
\end{equation*}
Plugging the above inequality into the inequality from Lemma~\ref{Lemma:SRGDA-key-inequality} yields that (recall that $\mu = 0$ in geodesically convex-concave setting and $\eta_t = \eta = \tfrac{1}{2}\sqrt{\tfrac{D_0}{\overline{\xi}_0(L^2 + \sigma^2)T}}$)
\begin{equation*}
\EE[f(\bar{x}_T, y^\star) - f(x^\star, \bar{y}_T)] \leq \tfrac{1}{2\eta T}\left((d_\MCal(x_0, x^\star))^2 + (d_\NCal(y_0, y^\star))^2\right) + 2\overline{\xi}_0 \eta(L^2 + \sigma^2).  
\end{equation*}
This together with $(d_\MCal(x_0, x^\star))^2 + (d_\NCal(y_0, y^\star))^2 \leq D_0$ implies that 
\begin{equation*}
\EE[f(\bar{x}_T, y^\star) - f(x^\star, \bar{y}_T)] \leq \tfrac{D_0}{2\eta T} + 2\overline{\xi}_0\eta(L^2 + \sigma^2). 
\end{equation*}
Since $\eta = \tfrac{1}{2}\sqrt{\tfrac{D_0}{\overline{\xi}_0(L^2 + \sigma^2)T}}$, we have 
\begin{equation*}
f(\bar{x}_T, y^\star) - f(x^\star, \bar{y}_T) \leq 2\sqrt{\tfrac{\overline{\xi}_0(L^2 + \sigma^2)D_0}{T}}. 
\end{equation*}
This completes the proof. 

\section{Additional Experimental Results}\label{sec:appendix-exp}
We present some additional experimental results for the effect of different choices of $\alpha$ as well the effect of different choices of $\eta$ for for RCEG. In our experiment here, we set $n=40$ consistently. 

Figure~\ref{fig:exp-deterministic-appendix} presents the performance of RCEG when $\alpha=2.0$. We observe that the results are similar to that summarized in Figure~\ref{fig:exp-deterministic}. In particular, the last iterate of RCEG consistently achieves the linearly convergence to an optimal solution in all the settings. In contrast, the average iterate of RCEG converges much slower than the last iterate of RCEG. Figure~\ref{fig:exp-deterministic-stepsize-appendix} summarizes the effect of different choices of $\eta$ in RCEG. We observe that setting $\eta$ as a relatively larger value will speed up the convergence to an optimal solution while all of the choices here lead to the linear convergence. This suggests that the choice of stepsize $\eta$ in RCEG can be aggressive in practice. 
\begin{figure*}[t!]
\centering
\includegraphics[width=0.32\textwidth]{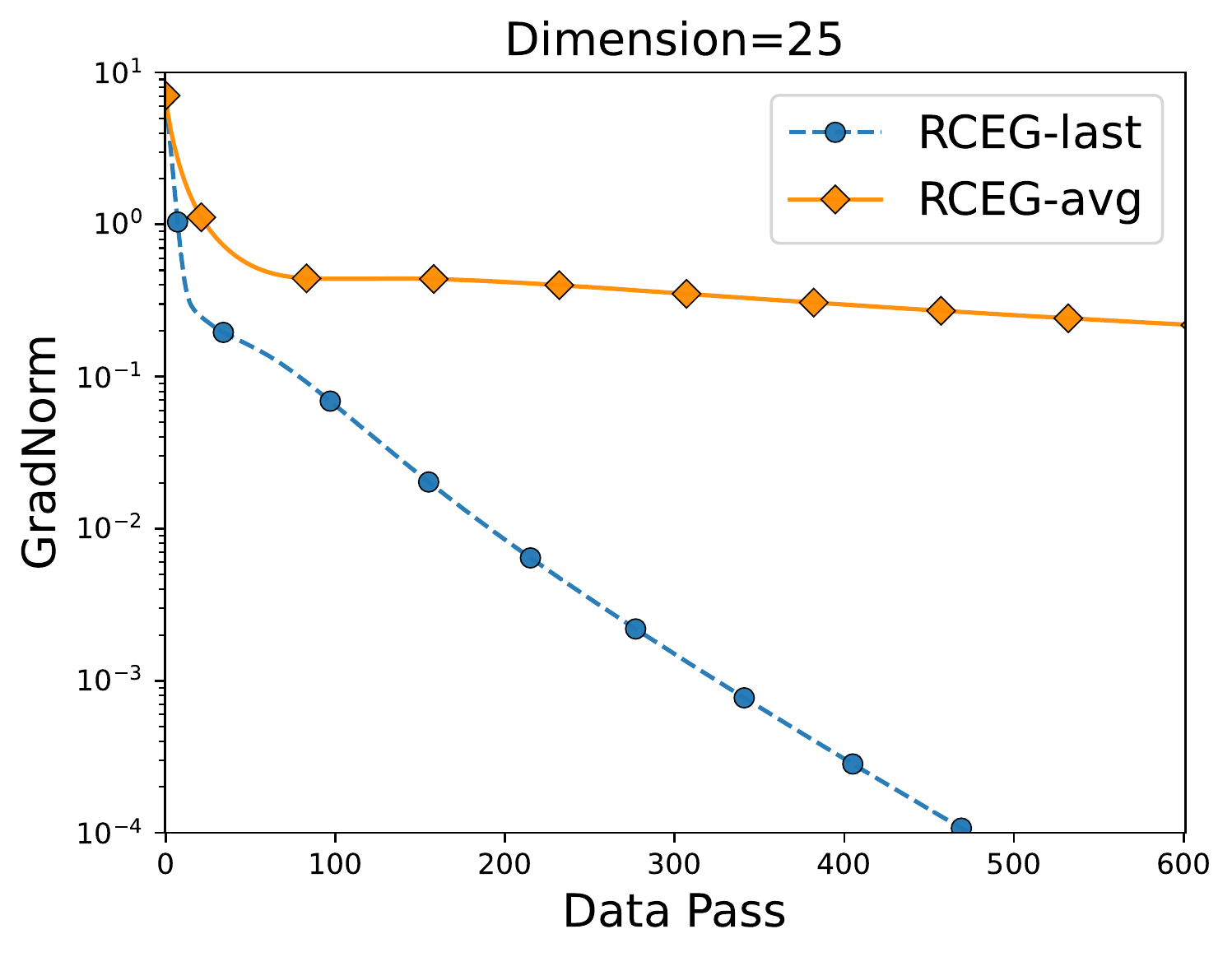}
\includegraphics[width=0.32\textwidth]{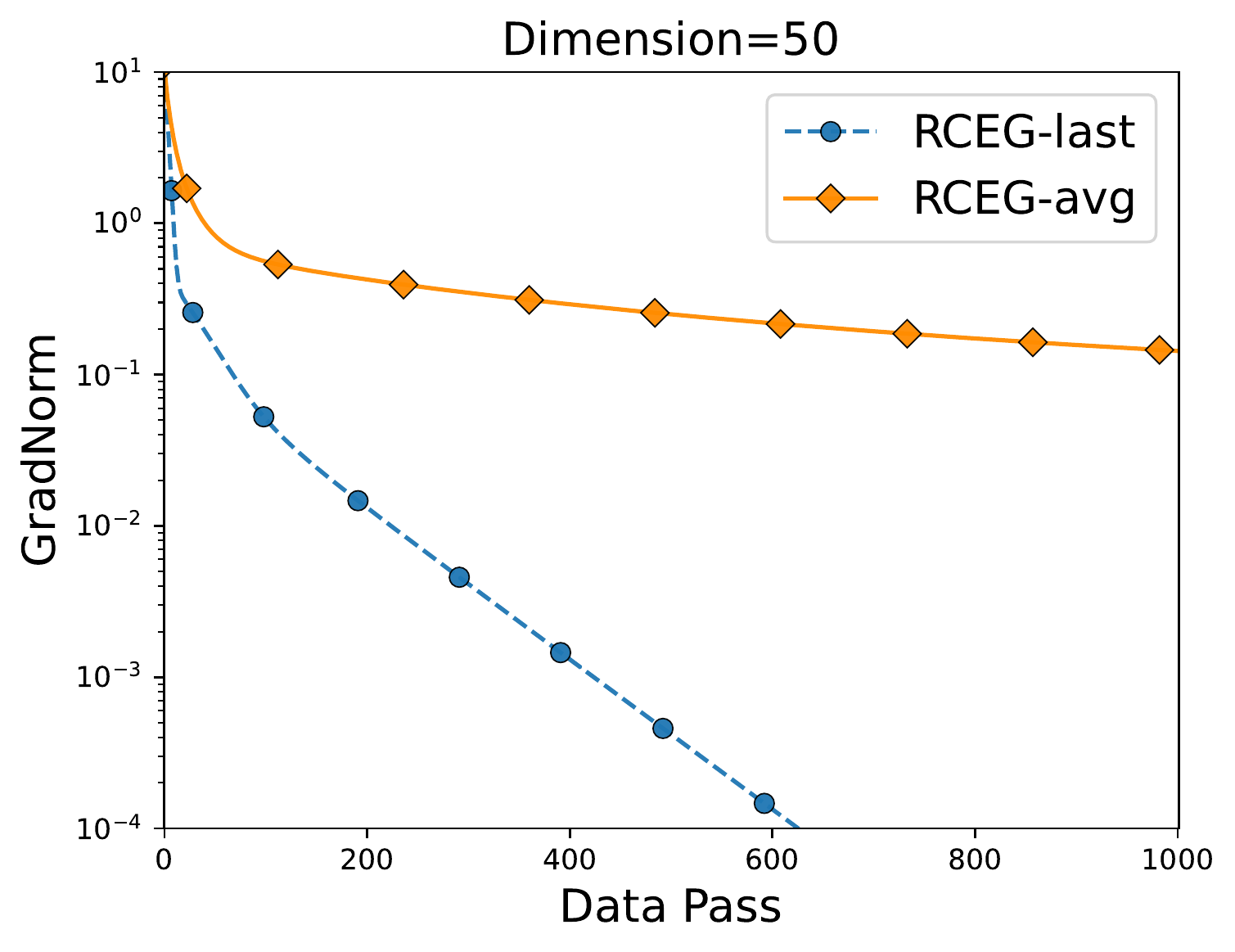}
\includegraphics[width=0.32\textwidth]{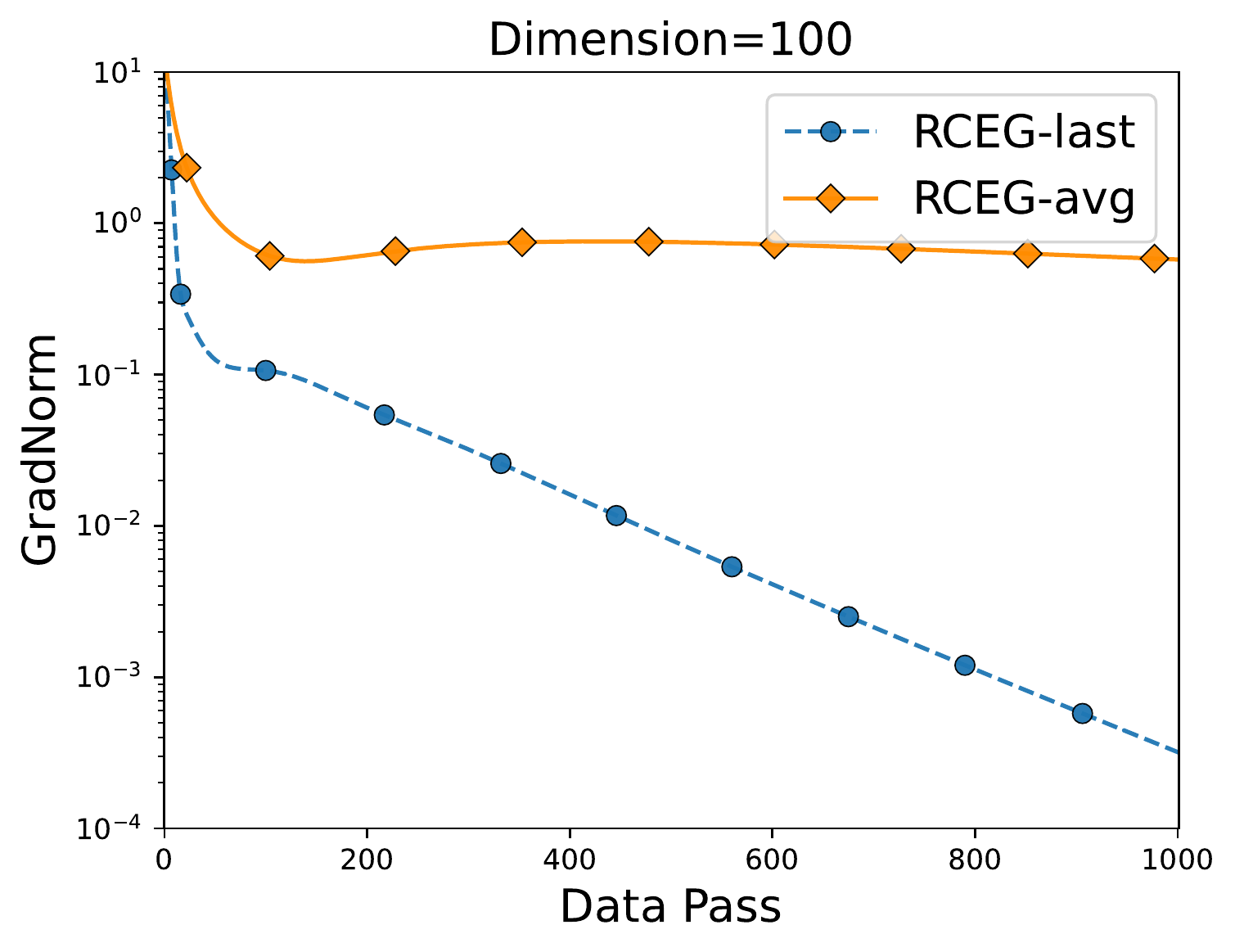}
\caption{\footnotesize{Comparison of last iterate (RCEG-last) and time-average iterate (RCEG-avg) for solving the RPCA problem when $\alpha=2.0$. The horizontal axis represents the number of data passes and the vertical axis represents gradient norm.}}
\label{fig:exp-deterministic-appendix}
\end{figure*}
\begin{figure*}[!t]
\centering
\includegraphics[width=0.45\textwidth]{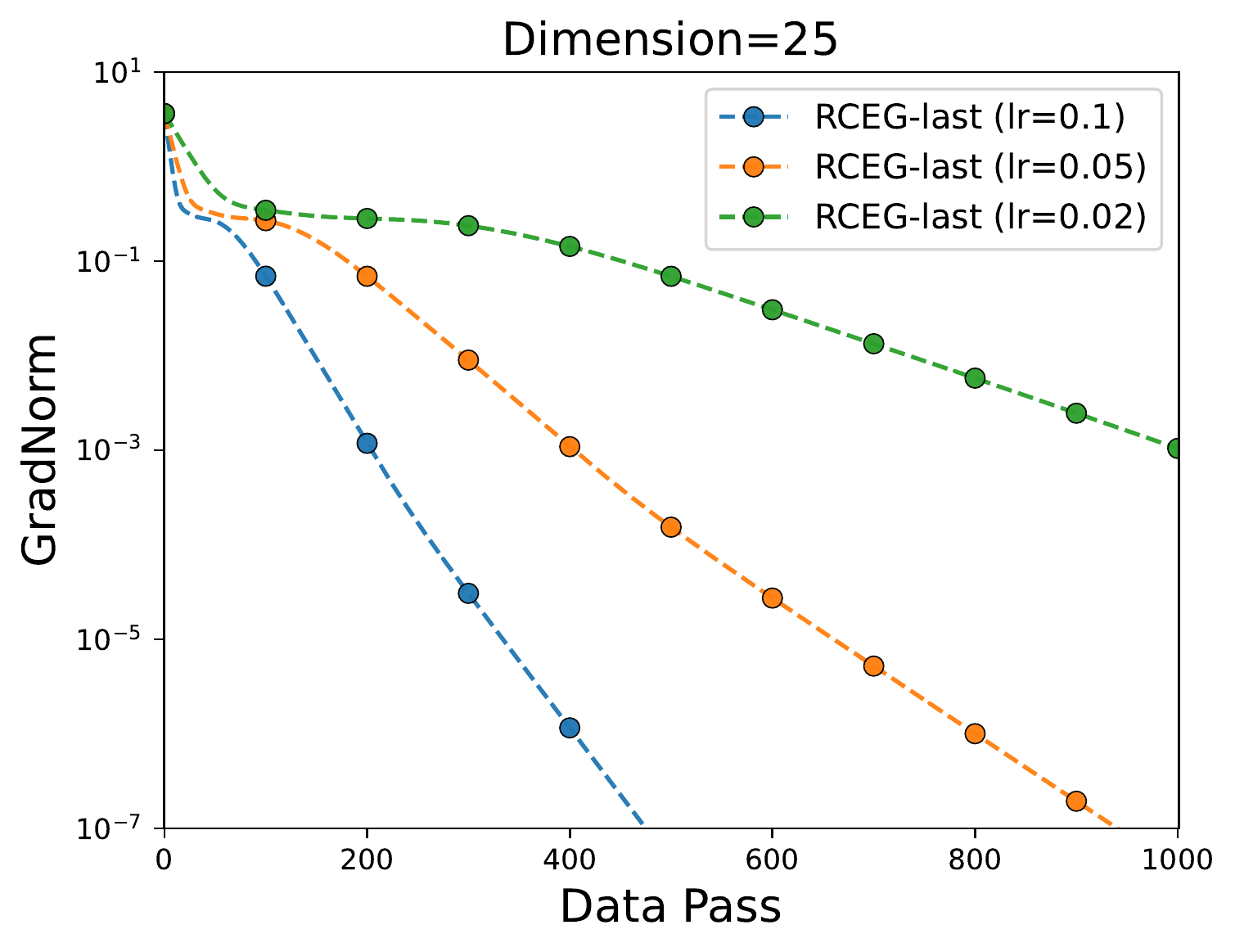}
\includegraphics[width=0.45\textwidth]{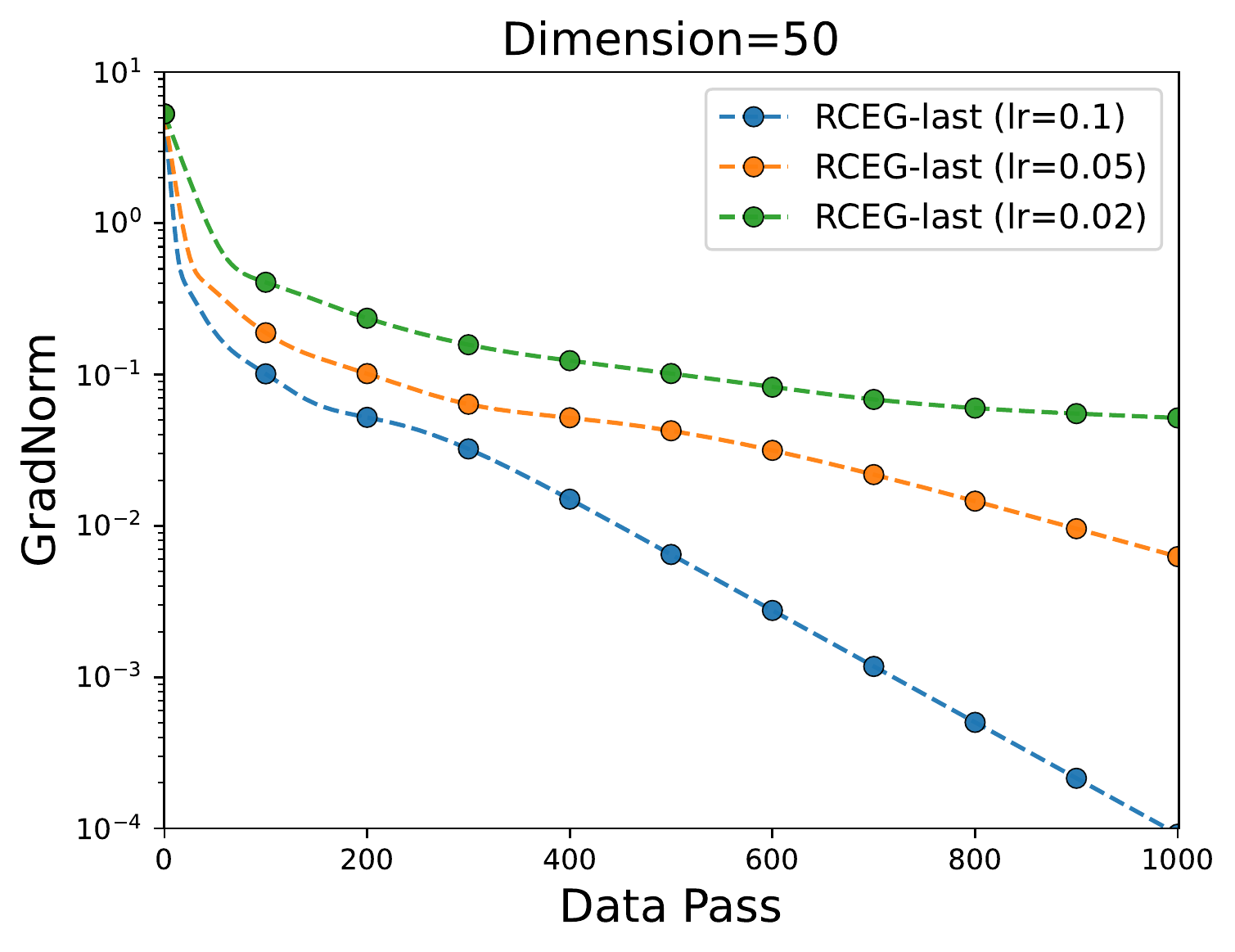}
\caption{\footnotesize{Comparison of different step sizes ($\eta \in \{0.1, 0.05, 0.02\}$) for solving the RPCA problem with different dimensions when $\alpha=2.0$. The horizontal axis represents the number of data passes and the vertical axis represents gradient norm.}}
\label{fig:exp-deterministic-stepsize-appendix}
\end{figure*}

\end{document}